\documentclass[11pt]{amsart}
\pdfoutput=1 

\usepackage[
text={440pt,575pt},
headheight=9pt,
centering
]{geometry}

  \usepackage{natbib}

\usepackage[ruled]{algorithm2e}
\usepackage{hyperref}
\usepackage{xcolor}
\usepackage{appendix}

\definecolor{darkred}{RGB}{160,0,0}
\definecolor{darkblue}{RGB}{0,0,160}
\hypersetup{
  colorlinks,
  citecolor=darkblue,
  filecolor=black,
  linkcolor=darkblue,
  urlcolor=darkblue
}

\usepackage[utf8]{inputenc}
\usepackage[T1]{fontenc}
\usepackage{microtype}
\usepackage{lmodern}
\usepackage{amsmath,amssymb,amsthm}
\usepackage{soul}
\usepackage{mdwlist}
\usepackage[pdftex]{graphicx}
\usepackage{latexsym}
\usepackage{verbatim}
\usepackage{graphicx,caption,subcaption}
\usepackage{xcolor}
\usepackage{todonotes} 
\usepackage{enumitem}
\usepackage{upgreek}

\allowdisplaybreaks 

\theoremstyle{plain}
\newtheorem{theorem}{Theorem}[section]
\newtheorem{corollary}[theorem]{Corollary} 
\newtheorem{lemma}[theorem]{Lemma}

\theoremstyle{definition}
 	
\newtheorem{remark}[theorem]{Remark}

\newcommand{\R}{\mathbb{R}}

\newcommand{\N}{\mathbb{N}}
\newcommand{\PP}{\mathbb{P}}
\newcommand{\NN}{\mathrm{N}}

\newcommand{\D}{\mathcal{D}}
\newcommand{\F}{\mathcal{F}}

\newcommand{\HH}{\mathcal{H}}
\newcommand{\I}{\mathcal{I}}
\newcommand{\K}{\mathcal{K}}
\newcommand{\M}{\mathcal{M}}

\newcommand{\W}{\mathcal{W}}
\newcommand{\E}{\mathbb{E}}
\newcommand{\EE}{\mathcal{E}}
\newcommand{\Var}{\mathrm{Var}}
\newcommand{\Cov}{\mathrm{Cov}}

\newcommand{\dx}{\mathrm{d}x}
\newcommand{\dy}{\mathrm{d}y}
\newcommand{\dF}{\mathrm{d}F}
\newcommand{\dFhat}{\mathrm{d}\hat{F}}
\newcommand{\ddiff}{\mathrm{d}(F_0 - \hat{F}_0)}
\newcommand{\dt}{\mathrm{d}t}
\newcommand{\du}{\mathrm{d}u}
\newcommand{\dv}{\mathrm{d}v}
\newcommand{\dw}{\mathrm{d}w}

\newcommand{\nt}{\lfloor nt\rfloor}

\usepackage{mathtools}

\title{Testing for correct model specification in copula regression models}
\author{Holger Dette, Philip Dörr}
\date{\today}

\begin{document}

\begin{abstract}
We propose a goodness-of-fit test for semiparametric copula regression models. Such models express the regression function in terms of marginal distribution functions and copula densities and therefore provide a flexible way to avoid fully nonparametric estimation in high-dimensional regression problems. Their performance, however, depends crucially on the specification of the parametric copula family. Instead of testing the copula model itself, we assess misspecification directly at the level of the induced regression function. To this end, we introduce a weighted $L^2$-distance between the true regression function and its best approximation within the postulated copula regression model. A kernel-based estimator of this distance is proposed and shown to be consistent and asymptotically normal under both the null hypothesis of correct specification and fixed alternatives. We derive a classical specification test and, using a self-normalized sequential statistic, construct pivotal confidence intervals and tests for relevant deviations from the model. 
Finite-sample simulations demonstrate accurate level approximation and good power properties of the proposed procedures.
\end{abstract}

\maketitle

{\it Keywords:} copulas, semi-parametric regression, goodness-of-fit testing, self-normalization

\section{Introduction}    \def\theequation{1.\arabic{equation}}	
   \setcounter{equation}{0}

Nonparametric regression models 
\begin{equation}
    Y = m(X) + \varepsilon\,, \label{pd1}
\end{equation}
with $d$-dimensional covariatee  and real-valued response $Y$ suffer from the curse of dimensionality, which severely deteriorates the performance of classical smoothing methods. To mitigate this difficulty, \citet{Noh2013} proposed a copula-based approach for the estimation of the unknown regression function $m$, which avoids direct nonparametric estimation of the conditional mean. Their method uses the decomposition 
\begin{equation}
    m(x) =\E(Y|X=x)= \frac{\E(Yc( F_0(Y), \F(x))}{c_X(\F(x))}\,, \label{2}
\end{equation} 
where $c$ and $c_X$ denote the copulas of the vectors $(Y,X^\top)^\top $ and $X=(X^{(1)}, \ldots, X^{(d)})^\top $, respectively, and $F_0$ and $\F = (F_1, \ldots, F_d)^\top$ are the (marginal) distribution functions of the response $Y$ and the predictor $X$, respectively. 
The regression function is then  estimated by combining nonparametric estimators of the marginal distributions with a parametric estimator of the copula. The resulting semiparametric {\it copula regression estimator}  inherits flexibility from the 
choice of the copula family and from the nonparametric estimation of the margins 
\citep{Noh2013}.

Since their seminal work, the idea of exploiting copula representations in regression modeling has been considered by many authors for  
different regression settings and data structures. 
For example, \citet{Noh2015} proposed a semiparametric estimator for conditional quantiles based on a copula representation of the conditional distribution function  \citep[see also][]{Kraus2017,Jobst2023}.  Copula regression for censored data, data with missing observations, and hierarchical data has been developed by \cite{Bouezmarni2020}, \cite{HAMORI2020}, and \cite{apkorivest2025}, and applications of the method can be found in \cite{Tran2022} and \cite{Bouezmarni2025}  among others. 
Recently, similar approaches have been considered in the context of {\it distributional copula regression}, where 
the dependence structure between variables can vary with covariates and is modeled through regression predictors for the parameters of the marginal distributions and the copula \citep[see, e.g.,][]{Klein2020,Klein2023}.

Despite their appealing flexibility, 
as pointed out by \cite{Noh2013} and  \citet{dette2014some},
copula-based regression models rely critically on the  correct specification of the copula.   The results in the last-named  paper demonstrate that copula misspecification can severely distort the estimated regression function, indicating that the copula specification is a fundamental modeling assumption.  In particular, 
statistical procedures for testing the validity of the copula specification are of central importance to ensure reliable inference. 
A substantial amount of literature in this direction has addressed the problem of testing the hypothesis
\begin{equation}
    H_0: c \in \{c(\cdot, \vartheta) \mid \vartheta \in \Theta\} \label{null}
\end{equation}
to check  adequacy of parametric copula specifications, and we refer to the reviews of \cite{Genest2009}, \cite{Berg2009}  and \cite{Fermanian2013} and the references therein. However, this formulation may be misleading, as using an incorrectly specified parametric copula $ c(\cdot, \vartheta)$ for the dependence structure does not necessarily result in a completely misspecified copula regression 
\begin{equation}
    m(x,\vartheta) =
    \frac{\E(Yc(F_0(Y), \F(x), \vartheta)}{c_X(\F(x), \vartheta)}\,, \label{2a}
\end{equation} 
for the regression function $m$ in \eqref{2}. In other words, $m(\cdot , \vartheta)$ might be a good approximation of $m$ although the parametric copula family has been misspecified.

In this paper, we therefore study the problem of misspecification in copula-based regression from this (different) perspective, focusing directly on the discrepancy between the regression function $m(\cdot , \vartheta)$ implied by the postulated model and the true regression function $m$, which reflects the primary interest of copula regression: a good approximation of the unknown regression model by a model which can be estimated without facing the curse of dimensionality. More specifically, we develop statistical inference for the deviation
\begin{equation}
    M^2 = \int \big (m(x) - m(x, \vartheta^* ) \big )^2 \pi (\dx)\,, \label{m^2}
\end{equation}
between  the regression function $m$ and its best approximation $m(\cdot , \vartheta^* )$ by a copula regression model of the form \eqref{2a} implied by  a given family of parametric copula models $\{c(\cdot, \vartheta) \mid \vartheta \in \Theta\}$. Here $\Theta \subseteq \R^l$ denotes the parameter space and $\pi $ is an appropriate measure defined below. We are particularly interested 
in the hypothesis 
\begin{align}
    \label{null1}
    H_0 : M^2 = 0\,, 
\end{align}
which  considers the specification problem of the copula directly at the level of implied regression functions by the copula regression approach. Moreover, when applying copula regression in practice, the assumption of a specific parametric copula family not necessarily reflects the belief that the copula precisely describes the dependence structure. Instead, it reflects the hope that the implied copula regression function is close to the true regression function $m$ so that reasonable estimation avoiding the curse of dimensionality is possible. Therefore, we are also interested  in confidence intervals for the measure of deviation between $m$ and its best approximation by a function of the form $m(\cdot , \vartheta) $, and in the hypothesis that the deviation is small, that is,
\begin{align}
    \label{null2}
      H_0 : M^2 \leq  \boldsymbol{\Delta} ~~{\rm versus ~~} H_1 : M^2 > \boldsymbol{\Delta}\,.
\end{align}
Here, $\boldsymbol{\Delta} > 0$ is a pre-specified constant for which one agrees that the deviation is sufficiently small such  that semiparametric estimation is reasonable. Note that hypotheses of the form \eqref{null2} have found considerable interest in the field of 
{\it  tolerant testing}, where one assesses whether the data is consistent with any distribution that lies within a given neighborhood of the candidate \citep[see, e.g.,][and the references therein]{canonne22a, kania2026testingimprecisehypotheses}.   Other recent references, where hypotheses of the form \eqref{null2} have been considered, are  \cite{wiedetal2024}, \cite{BastianDetteHeiny} and \cite{baillo2025bootstrap2025}.

The remaining part of this  article is organized as follows. In Section~\ref{sec2}, we give more details  on the copula regression approach as proposed by \cite{Noh2013}. We introduce an estimator of $M^2$ in Section~\ref{sec3}, which does not require direct estimation of the regression function  (thus avoiding the curse of dimensionality) and show its  asymptotic normality under 
the null hypothesis \eqref{null1}
in Section~\ref{sec3.1} and under the  alternative $M^2>0 $ in Section~\ref{sec3.2}, respectively.  These results can be used directly to develop an asymptotic level-$\alpha$ test for the hypothesis \eqref{null1} and to prove its consistency. However, inference based on these estimators such as the construction of confidence regions or tests of hypotheses of the form  \eqref{null2} is challenging because the asymptotic variance has a highly complex structure in the case $M^2 >0$. To address this problem, we develop  pivotal inference for the measure $M^2$ in Section~\ref{sec4}, which avoids estimation of said asymptotic variance. In Section~\ref{sec5}, we present the results of a small simulation study to demonstrate suitable finite sample behavior. Section~\ref{sec6} gives a short conclusion and outlook on subsequent research. The appendix section~\ref{appendix_a} gathers all proofs of our theoretical results, including some technical auxiliary lemmas whose proofs are gathered in the additional appendix section~\ref{appendix_b}. 

\section{Copula regression revisited} \label{sec2} 
   \def\theequation{2.\arabic{equation}}	
   \setcounter{equation}{0}

Let $(X_1,Y_1) , \ldots , (X_n,Y_n)$ denote a sample of i.i.d. random variables, where the $Y_i$, $i=1, \ldots, n,$ are univariate responses and $X_i = \bigl(X_{i}^{(1)}, \ldots, X_{i}^{(d)}\bigr)^\top$ are $d$-dimensional predictors with distribution function $F_0$
 and  $\mathcal{F}= (F_1, \ldots, F_d)^\top$, respectively. 
 We consider the nonparametric regression model 
%
\begin{equation}
    Y_i = m(X_i) + \varepsilon_i\,, \qquad i=1, \ldots, n\, , \label{pd2}
\end{equation} 
where $\varepsilon_i = Y_i - \E(Y_i \mid X_i)$ is centered and independent from $X_i$.
We tacitly assume that $X$ and $Y$ have Lebesgue densities.

Let $c$ denote the (joint) copula density of the vector $ (Y,X^\top )^\top = \bigl(Y, X^{(1)}, \ldots, X^{(d)}\bigr)^\top$ and let 
\[  
    c_X(u_1, \ldots, u_d) = \int_0^1 c(u_0, u_1, \ldots, u_d)\du_0
\]
denote the (inner) copula density of the covariates $X= \bigl(X^{(1)}, \ldots, X^{(d)}\bigr)^\top$. 
The approach of \cite{Noh2013} imposes a parametric  assumption on the copula density $c$, that is, 
\begin{equation}
    c \in \mathcal{ C} := \big \{c(\cdot, \vartheta) \mid \vartheta \in \Theta \big \}.
\label{model}
\end{equation}
We assume  identifiability, i.e., $c(\cdot , \vartheta') \neq c(\cdot, \vartheta)$ whenever $\vartheta' \not = \vartheta$, and denote by  $\vartheta_0 \in \Theta$ the parameter corresponding to $c$ if  \eqref{model} is satisfied, i.e., $c(\cdot ) = c(\cdot , \vartheta) $. In this case,  based on \eqref{2a}, the unknown regression function $m$ can be represented in the form
\begin{align}
\label{det1a}    
    m(x) = m(x, \vartheta_0)=\frac{1}{c_X(\F(x), \vartheta_0)}\int_\R yc(F_0(y), \F(x), \vartheta_0)\dF_0(y)\,.
\end{align}
To estimate $m$, we denote by  
\[
    \hat{F}_0 (y) = \frac{1}{n} \sum_{i=1}^n \textbf{1}\{ Y_i \leq y\} ~~~\text {  and ~~} 
    \hat{F}_j (x_j) = \frac{1}{n} \sum_{i=1}^n \textbf{1}\{ X_i^{(j)} \leq x_j\}
\]
the empirical distribution functions of the response and the $j$-th predictor, respectively, and define $\hat{\F} = (\hat{F}_1, \ldots, \hat{F}_d)$ as the vector of the marginal empirical distribution functions of the covariate.
For better readability, we occasionally use the short  notation
~$U_i := \F(X_i)$ for $i = 1, \ldots, d$. Additionally, we write $\hat{U}_j := \hat{\F}(X_j).$
Replacing the distribution functions $F_0, \F$ by their empirical counterparts $\hat{F}_0, \hat{\F}$ and the true parameter $\vartheta_0$ by an appropriate estimator $\hat{\vartheta}_n$, \cite{Noh2013} proposed the estimator
\begin{align}
\label{det1b}
    \hat{m}(x) := \frac{1}{c_X(\hat{\F}(x), \hat{\vartheta}_n)}\int yc(\hat{F}_0(y), \hat{\F}(x), \hat{\vartheta}_n)\dFhat_0(y) \,.
\end{align}
Then, under correct specification, these authors showed that $\sqrt{n}(\hat{m}(x) - m(x))$ converges weakly to a normal distribution.  

If \eqref{model} is not satisfied, then the asymptotic properties depend on the choice of the estimator $\hat \vartheta_n$. To be specific, in this paper we focus on the pseudo-maximum likelihood estimator 
\begin{equation}
    \hat{\vartheta}_{n} := \hat{\vartheta}_n^{\text{PL}} := \arg \max_{\vartheta \in \Theta} \sum_{i=1}^n \log\left(c(\hat{F}_{0}(Y_i), \hat{\F}(X_i), \vartheta)\right) \label{mpl}
\end{equation}
with respect to the joint copula density. Other estimators with similar properties could be considered as well, \citep[see Assumption B in][]{Noh2013}. If the model \eqref{model} is correctly specified, then \cite{tsukahara2005semiparametric} showed  that $\sqrt{n}(\hat{\vartheta}_n - \vartheta_0) = O_{\PP}(1)$, implying that $\hat{\vartheta}_n$ is a $\sqrt{n}$-consistent estimator of $\vartheta_0$. 
On the other hand, if  \eqref{model} does not hold, it can be shown that $\sqrt{n}(\hat{\vartheta}_n - \vartheta^*) = O_{\PP}(1)$  \citep[see Remark 3 in][]{Noh2013}, where $\vartheta^*$ corresponds to the parameter minimizing the Kullback-Leibler distance between the true copula and the class $\mathcal{C}$ in \eqref{model}, that is,
\begin{equation}
    \vartheta^* := \arg\min_{\vartheta \in \Theta} \int_{[0,1]^{d+1}} \log(c(u_0, \textbf{u})) - \log(c(u_0, \textbf{u}, \vartheta))\text{d}C(u_0, \textbf{u})\,.\label{theta_star}
\end{equation}
As pointed out in the same reference, the estimator \eqref{mpl} has the stochastic expansion 

\begin{align}
\label{item6}
    \hat{\vartheta}_n - \vartheta^* = n^{-1}\sum_{k=1}^n \eta_k + o_{\PP}(n^{-1/2})\,, 
\end{align}
where $\eta_1, \eta_2, \ldots$ are i.i.d.\ mean-zero random vectors defined by $\eta_k = \eta(F_0(Y_k), \F(X_k), \vartheta^*)$ for the functional
\begin{align*}
    \eta(U_0, U_1, \ldots, U_d, \vartheta^*) :=~& J^{-1}(\vartheta^*) \Biggl(\frac{\partial}{\partial_\vartheta} \log c(U_0, U_1, \ldots, U_d, \vartheta^*)  \\
    +~& \left.\sum_{j=0}^d \int_{[0,1]^{d+1}} (\textbf{1}\{U_j \leq v_j\} - v_j)\frac{\partial^2}{\partial \vartheta \partial v_j} \log c(v_0, \textbf{v}, \vartheta^*)\mathrm{d}C(v_0, \textbf{v}, \vartheta^*)\right), \\
    J^{-1}(\vartheta^*) :=~& \int_{[0,1]^{d+1}} \left(\frac{\partial^2}{\partial \vartheta \partial \vartheta^\top} \log c(u_0, \mathbf{u}, \vartheta^*)\right)\mathrm{d}C(u_0, \textbf{u}, \vartheta^*)\,.
\end{align*}

Throughout this paper, we use the abbreviation $m^*(\cdot) := m(\cdot, \vartheta^*)$ for the copula regression model induced by \eqref{det1} for the parameter $\vartheta^*$, and define $\Delta(x) := m(x) - m^*(x)$ as the  error between the true regression function $m$ and its  approximation from the copula regression model at the point $x$. 
If \eqref{model} holds, then $\vartheta_0 = \vartheta^*$ and $\Delta \equiv 0$.

We occasionally abbreviate $\hat{c}(u_0, \mathbf{u}) = c(u_0, \mathbf{u}, \hat{\vartheta}_n)$ and $\hat{c}_X(\mathbf{u}) = c_X(\mathbf{u}, \hat{\vartheta}_n)$. For the estimators $\hat{F}_0, \hat{\F}, \hat{m}, \hat{c}, \hat{c}_X,$ we suppress the dependence of $n$ in the notation.
We assume that the parametrized copula densities $c = c(u_0, \textbf{u}, \vartheta)$ and $c_X(\textbf{u}, \vartheta)$ as well as the function $\boldsymbol{e}(\textbf{u}, \vartheta)$ are twice continuously differentiable in all arguments. We denote the first-order partial derivatives by $\partial_{u_0}c = \partial c/\partial u_0$, $\partial_{u_i}c = \partial c/\partial u_i$ for $i = 1, \ldots, d$, as well as $\partial_{\textbf{u}}c = \left(\partial_{u_1}c, \ldots, \partial_{u_d}c\right)^\top$ and $\partial_{\vartheta}c = \partial c/\partial \vartheta = \left(\partial c/\partial \vartheta_1, \ldots, \partial c/\partial \vartheta_l\right)^\top$. The second-order derivatives of $c$ and the derivatives of $c_X$ are denoted in an analogous fashion.

\section{A measure of deviation and its stochastic properties 
} \label{sec3}
   \def\theequation{3.\arabic{equation}}	
   \setcounter{equation}{0}

\subsection{An $L^2$-distance for the deviation of the copula regression model}
\label{sec3.0}

To measure the deviation between the unknown regression function $m$ and the best approximation by the copula regression approach, we consider a (weighted) $L^2$-distance for the function $\Delta (\cdot )$. More specifically,  motivated by classical results on estimating the $L^2$-norm of a density \citep[see][]{hall1987estimation,bickel1988estimating}, we consider the statistic
\begin{align}
    \label{det1}
    W_n := \frac{1}{n(n-1)}\sum_{i\not= j} \frac{1}{h^d} K\left(\frac{X_{i} - X_{j}}{h}\right)e_ie_j\hat{c}_X(\hat{U}_i)\hat{c}_X(\hat{U}_j)\,,
\end{align}
where $K(\cdot)$ is a kernel function, $h$ is a bandwidth depending on the sample size $n$ and 
\[
    e_i := Y_i - \hat{m}(X_i) 
\]
denotes the residual obtained from a fit by the copula regression approach \eqref{det1b}. Note that the factors $\hat{c}_X(\hat{U}_j) = c_X(\hat{U}_j , \hat \vartheta_n)$ are introduced in \eqref{det1} to cancel the denominator $\hat{c}_X( \hat{\mathcal{F}} (X_i), \hat \vartheta_n) $ in the definition of $\hat m(X_i)$ by \eqref{det1b}. 
Intuitively, replacing the residuals $e_i$ with $\Delta(X_i)$ and replacing $\hat{c}_X(\hat{U}) = \hat{c}_X(\hat{\F}(X))$ with $c_X^*(\F(X)) ={c}_X(\mathcal{F}({X}_i), \vartheta^*)$, and calculating the expectation, the statistic $W_n$ can be considered as an estimate of the $L^2$-distance 
\begin{equation}
    M^2 = \int \Delta(x)^2 c_X^*(\F(x))^2p(x)^2\dx = \E\left(\Delta(X)^2c_X^*(\F(X))^2p(X)\right), \label{m2_full}
\end{equation}
where $p$ denotes the density of the predictor. This means that in \eqref{m^2}, we put $\pi(\dx) := c_X(\F(x), \vartheta^*)p^2(x)\dx$.
Our first main result precisely specifies these heuristic arguments. For this purpose, we impose the following technical assumptions.   
\begin{enumerate}[label=(R\arabic*)]
    \setlength{\itemsep}{1pt}
    \item \label{item1} The covariate $X$ has a differentiable density $p $ which is uniformly bounded, and so are its first-order derivatives. The function $x \to \sigma^4(x) := \E(Y^4 \mid X=x)$ is continuously differentiable and bounded by a measurable function $b$ such that $\E(b(X)^2) < \infty$. Further, $\E(m(X)^4) < \infty$. The error function $\Delta = m  - m^*$ is continuously differentiable and $L^2$-bounded. 
    \item \label{item2} The parameter space $\Theta$ is a compact subset of $\R^l$ and $\vartheta^*$ is an interior point of $\Theta$. 
    \item \label{item3} The copula densities $c(y, x, \vartheta),$ $\vartheta = (\vartheta_1, \ldots, \vartheta_l)^\top  \in \R^l$ are partially twice continuously differentiable in all arguments and satisfy
    \begin{align*}
        &\E\left(\sup_{\vartheta \in \Theta} Y^2 c(F_0(Y), \F(X), \vartheta)^2\right) < \infty\,, \\
        &\E\left(\sup_{\vartheta \in \Theta} Y^2 \partial_z c(F_0(Y), \F(X),  \vartheta)^2\right) < \infty\,, \\
        &\E\left(\sup_{\vartheta \in \Theta} Y^2 \partial_{z_1z_2}^2c(F_0(Y), \F(X), \vartheta)^2\right) < \infty
    \end{align*}
    for any $z, z_1, z_2 \in \{u_0, u_1, \ldots, u_d, \vartheta_1, \ldots, \vartheta_l\}$, as well as
    \[
       \E\left(\sup_{\vartheta \in \Theta} Y_{i_1}^2Y_{i_2}^2 c(F_0(Y_{i_1}), \F(X_{j_1}), \vartheta)^2 c(F_0(Y_{i_2}), \F(X_{j_2}), \vartheta)^2\right) < \infty 
    \]
    for all combinations $i_1, i_2, j_1, j_2 \in \{1,\ldots,4\}$.
    \item \label{item4} The conditions in \ref{item3} also hold true if $Y$ is replaced with $X$ and $c$ is replaced with $c_X$. Further, $\E(\sigma^4(X)p(X)^3c_X(\F(X))^4) < \infty$.
    \item \label{item5} The kernel $K $ is a bounded, continuous, and symmetric function with $\int K(u)\du = 1$. 
\end{enumerate}


\begin{theorem} \label{thm2.1}
Under the regularity conditions \emph{\ref{item1}--\ref{item5},} $W_n$ is a consistent and asymptotically unbiased estimator of $M^2,$ that is, $W_n \overset{\PP}{\longrightarrow} M^2$ and $\E(W_n) \longrightarrow M^2$ as $n \rightarrow \infty$.
\end{theorem}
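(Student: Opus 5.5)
The strategy is to split $W_n$ into an oracle $U$-statistic plus remainders coming from estimating $\vartheta^*$, $F_0$ and $\F$. Write $c_X^*(\cdot)=c_X(\cdot,\vartheta^*)$ and define the oracle residuals
\[
e_i^* := Y_i - m^*(X_i) = \varepsilon_i + \Delta(X_i).
\]
Let
\[
\tilde W_n := \frac{1}{n(n-1)}\sum_{i\neq j} h^{-d}K\Bigl(\frac{X_i-X_j}{h}\Bigr)e_i^*e_j^*c_X^*(U_i)c_X^*(U_j).
\]
This is a $U$-statistic with a symmetric kernel depending on $n$. I will show $\E(\tilde W_n)\to M^2$ and $\Var(\tilde W_n)\to 0$, and then $W_n-\tilde W_n = o_{\PP}(1)$. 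This gives consistency; asymptotic unbiasedness additionally needs $\E(W_n-\tilde W_n)\to 0$. Throughout I implicitly assume $h\to 0$ and $nh^d\to\infty$, although the statement does not list bandwidth conditions.

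\emph{Step 1 (oracle expectation).} Conditioning on $X_i,X_j$ and using $\E(\varepsilon_i\mid X_i)=0$ together with independence across $i\neq j$ gives
\[
\E(\tilde W_n)=\int\!\!\int h^{-d}K\Bigl(\frac{x-z}{h}\Bigr)g(x)g(z)\,\dx\,\mathrm{d}z,\qquad g(x):=\Delta(x)c_X^*(\F(x))p(x).
\]
Substitute $z=x+hu$. By \ref{item1}, $g$ is continuous, and by \ref{item1} and \ref{item4}, $g\in L^2$. Then $\E(\tilde W_n)=\int K(u)\int g(x)g(x+hu)\,\dx\,\du$, and continuity of translation in $L^2$ together with dominated convergence (via Cauchy--Schwarz and $\int K=1$) yields $\int g^2=M^2$, which is exactly \eqref{m2_full}.

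\emph{Step 2 (oracle variance).} Apply the Hoeffding decomposition. The linear projection is $\frac{2}{n}\sum_i \psi_n(X_i,Y_i)$ with $\psi_n(X_i,Y_i)=e_i^*c_X^*(U_i)(K_h*g)(X_i)$, whose variance is $O(n^{-1})$. This uses $\E(Y^2\mid X)$ from \ref{item1}, the moment bounds in \ref{item4}, and boundedness of $p$ and $K$. The degenerate part has variance $O\bigl(n^{-2}\E[h^{-2d}K^2(\cdot)\cdots]\bigr)=O\bigl((n^2h^d)^{-1}\bigr)$, where the bound on $\E(\sigma^4(X)p(X)^3c_X(\F(X))^4)$ is what controls it. Both terms vanish, so $\tilde W_n\to M^2$ in $L^2$.

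\emph{Step 3 (remainders), the main obstacle.} Write
\[
e_i=e_i^*-(\hat m(X_i)-m^*(X_i)),\qquad \hat c_X(\hat U_i)=c_X^*(U_i)+r_i.
\]
Expanding the product splits $W_n-\tilde W_n$ into finitely many terms. Each contains at least one factor $\hat m(X_i)-m^*(X_i)$ or $r_i$. Taylor expansion in $(\vartheta,u)$, using the twice differentiability in \ref{item3}--\ref{item4}, shows that these factors are of order $\|\hat\vartheta_n-\vartheta^*\|+\|\hat F_0-F_0\|_\infty+\max_j\|\hat F_j-F_j\|_\infty = O_{\PP}(n^{-1/2})$, by \eqref{item6} and DKW. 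Each is multiplied by envelope functions of the form $\sup_\vartheta |Y\,\partial c|$, which have finite second moments by \ref{item3}.

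The difficulty is that the remainder terms still involve the kernel weight $h^{-d}K((X_i-X_j)/h)$. To handle this I first bound $|W_n-\tilde W_n|$ by $O_{\PP}(n^{-1/2})$ times a $U$-statistic with nonnegative kernel
\[
h^{-d}|K|\bigl((X_i-X_j)/h\bigr)B_iB_j,
\]
where $B_i$ is an envelope built from the suprema in \ref{item3}--\ref{item4}. The expectation of this $U$-statistic is bounded by the same convolution argument as in Step 1, applied to $\E(B\mid X)p$. For the pieces in $\hat m$ that are themselves averages over an index $k$ (the integral against $\dFhat_0$), I use the fourth-order moment conditions for indices $i_1,i_2,j_1,j_2\in\{1,\dots,4\}$ in \ref{item3}. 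These bound the resulting third- and fourth-order $U$-type sums uniformly in $\vartheta$.

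This gives $W_n-\tilde W_n=o_{\PP}(1)$. For $\E(W_n)\to M^2$, the same bounds computed in $L^1$ (by Cauchy--Schwarz with the squared sup-moments) give $\E|W_n-\tilde W_n|\to 0$. The step I expect to be most delicate is this uniform control of the $\hat\vartheta_n$-dependence inside expectations. I would handle it by using suprema over the compact $\Theta$ from \ref{item2} rather than the $O_{\PP}$ rate alone.
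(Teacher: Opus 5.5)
Your proposal is correct in outline and matches the paper on the oracle part. Your $\tilde W_n$ is the paper's $\widetilde W_{n1}$ (with $e_i^*=\tilde\varepsilon_i$), and the mean comes from the same substitution; your $L^2$-continuity-of-translation argument justifies the limit more cleanly. Your Hoeffding variance bound is what the paper gets from the Powell--Stock--Stoker condition $\E(H_n^2)=O(h^{-d})=o(n)$. You are also right that $nh^d\to\infty$ is used even though it is not stated.

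The genuine difference is in how you control the remainder $W_n-\tilde W_n$. The paper splits $W_n=W_{n1}+2W_{n2}+W_{n3}$ along $e_i=\tilde\varepsilon_i+(m^*-\hat m)(X_i)$ and proves sharp rates:
\begin{itemize}
\item $W_{n1}-\widetilde W_{n1}=O_{\PP}(1/n)$, by Taylor expansion and $U$-statistic projections (Lemma~\ref{lemmareplace1a});
\item $W_{n3}=O_{\PP}(1/n)$, from the uniform bounds of Lemma~\ref{lemmauniform};
\item $W_{n2}=O_{\PP}(1/n)$, from explicit mean/variance computations and Zheng's lemma (Lemma~\ref{lemma5.1}).
\end{itemize}
You instead dominate everything by $O_{\PP}(n^{-1/2})$ times a nonnegative-kernel $U$-statistic that is $O_{\PP}(1)$. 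This gives only $o_{\PP}(1)$. That is all Theorem~\ref{thm2.1} needs, but it cannot be reused at the scales $nh^{d/2}$ and $\sqrt n$ of Theorems~\ref{thm3.1} and~\ref{thm4.1}, which is why the paper invests in finer rates.

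Conversely, your bound needs no centering of $\tilde\varepsilon_i$. The paper's \eqref{det2} rests on Lemma~\ref{lemma5.1}, which is stated under the null. When $M^2>0$, the part of $W_{n2}$ with $\Delta(X_i)$ in place of $\varepsilon_i$ is generally of exact order $n^{-1/2}$; it is one source of $\sigma_2^2$ in Theorem~\ref{thm4.1}. So on the alternative your cruder estimate is the one that actually holds. You also address $\E(W_n)\to M^2$, which the paper's proof never treats beyond $\E(\widetilde W_{n1})\to M^2$.

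There are two soft spots.

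First, $\hat{\boldsymbol e}(\hat U_i)-\boldsymbol e^*(U_i)$ is not controlled by $\|\hat\vartheta_n-\vartheta^*\|+\|\hat F_0-F_0\|_\infty+\max_j\|\hat F_j-F_j\|_\infty$ alone. It also contains the fluctuation $n^{-1}\sum_k Y_kc(F_0(Y_k),\mathbf u,\vartheta^*)-\boldsymbol e^*(\mathbf u)$. This is not a Taylor term, and it is not bounded by $\|\hat F_0-F_0\|_\infty$, because $y\mapsto yc(F_0(y),\mathbf u,\vartheta^*)$ is unbounded. You need the Donsker bound of Lemma~\ref{lemmauniform}, or a direct second-moment computation as for $E_{n1}^{(1)}$.

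Second, your $L^1$ step is only sketched. Cauchy--Schwarz against the envelope $U$-statistic needs its second moment. That in turn needs fourth-moment control of the derivative envelopes, taken uniformly over the Taylor intermediate points. Conditions \ref{item3}--\ref{item4} provide this only for the copula densities themselves, not for their derivatives. You should either add such a condition or argue by uniform integrability of the envelope statistic, using that your rate factor is bounded (compact $\Theta$, $\|\hat{\F}-\F\|_\infty\le 1$) and tends to zero in probability.
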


Theorem \ref{thm2.1} justifies the use of $W_n$ as a measure of deviation between $m$ and the regression function induced by the copula regression model. In the following two subsections we derive two results which can be used for uncertainty quantification of this estimate. 

\subsection{Behavior under the null hypothesis} \label{sec3.1}

If the parametric family $\{c(\cdot, \vartheta)\}_{\vartheta \in \Theta}$ is well-specified, i.e., if \eqref{model} is true, then $nh^{d/2}W_n$ has a normal limit distribution. 

\begin{theorem} \label{thm3.1}
Let $h$ be chosen in a way such that $h = o(1),$ but $nh^d \longrightarrow \infty$ as $n \rightarrow \infty$. Under the null hypothesis and the regularity conditions \emph{\ref{item1}--\ref{item5},} we have for the statistic $W_n$ as given in \eqref{det1}\emph{:}
\begin{align}
\label{sigmahata}
    nh^{d/2}W_n \overset{\D}{\longrightarrow} \NN(0, \sigma_0^2)
\end{align}
with the asymptotic variance
\[
    \sigma_0^2 = 2\int K(u)^2\du \cdot \int \sigma^2(x)^2c_X(\F(x))^2 p(x)^2\dx\,,
\]
where $\sigma^2(x) := \Var(Y - m(x))$. The asymptotic variance $\sigma_0^2$ can be consistently estimated by 
\begin{equation}
    \hat{\sigma}_{0,n}^2 = \frac{2}{n(n-1){h^d}}\sum_{i\not= j} K^2\bigg (\frac{X_i - X_j}{h} \bigg) e_i^2e_j^2\hat{c}_X(\hat{U}_i)^2\hat{c}_X(\hat{U}_j)^2\,. \label{sigmahat}
\end{equation}
\end{theorem}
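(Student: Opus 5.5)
Under the null hypothesis we have $\vartheta^*=\vartheta_0$, $m^*=m$ and $\Delta\equiv 0$. Hence $Y_i = m(X_i,\vartheta_0)+\varepsilon_i$, and each residual splits as $e_i = \varepsilon_i - (\hat m(X_i)-m(X_i))$. Write $c_{X,i}:=c_X(U_i,\vartheta_0)$ and $\hat g_i := \hat c_X(\hat U_i)$. The plan is to decompose
\[
W_n = W_{n,1} + W_{n,2} + W_{n,3},
\]
where $W_{n,1}$ is the leading U-statistic
\[
W_{n,1} = \frac{1}{n(n-1)}\sum_{i\neq j} h^{-d}K\Big(\frac{X_i-X_j}{h}\Big)\varepsilon_i\varepsilon_j c_{X,i}c_{X,j},
\]
$W_{n,2}$ collects the cross terms containing one factor $\varepsilon$ and one estimation error, and $W_{n,3}$ collects the terms containing two estimation errors. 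The errors come from $\hat m - m$ and from $\hat g_i - c_{X,i}$.

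\textbf{Leading term.} $W_{n,1}$ is a degenerate U-statistic. Since $\E(\varepsilon_i\mid X_i)=0$ and the observations are independent, its kernel $H_n(Z_i,Z_j)$ satisfies $\E(H_n(Z_i,Z_j)\mid Z_i)=0$. I will apply Hall's (1984) central limit theorem for degenerate U-statistics, equivalently a martingale CLT. This requires computing two quantities:
\[
\E H_n^2 = h^{-d}\Big(\int K^2 + o(1)\Big)\int \sigma^2(x)^2 c_X(\F(x))^2 p(x)^2\,dx,
\]
obtained by the substitution $x_j = x_i + hu$ together with continuity of $\sigma^2$, $p$ and $c_X$ from \ref{item1}, \ref{item4}; and the ratio condition
\[
\frac{\E G_n^2 + n^{-1}\E H_n^4}{(\E H_n^2)^2}\to 0, \qquad G_n(z_1,z_2)=\E\big(H_n(Z_1,z_1)H_n(Z_1,z_2)\big).
\]
Here $\E G_n^2 = O(h^{-d})$ and $\E H_n^4 = O(h^{-3d})$. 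Together with $(\E H_n^2)^2 \asymp h^{-2d}$ and $nh^d\to\infty$, the ratio tends to zero, where the moment condition $\E(\sigma^4 p^3 c_X^4)<\infty$ is used. Since $\operatorname{Var}(W_{n,1}) \approx 2\E H_n^2/n^2$, we obtain $nh^{d/2}W_{n,1}\to \NN(0,\sigma_0^2)$.

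\textbf{Remainder terms.} I will show $nh^{d/2}(W_{n,2}+W_{n,3}) = o_\PP(1)$. The estimators $\hat\vartheta_n-\vartheta_0$, $\hat F_0-F_0$ and $\hat{\F}-\F$ are all $O_\PP(n^{-1/2})$, the first by \eqref{item6} and the others by DKW. A Taylor expansion of $c$ and $c_X$ in $(u,\vartheta)$, justified by \ref{item3}, \ref{item4}, then writes $\hat m(x)-m(x)$ and $\hat g_i-c_{X,i}$ as averages of i.i.d. terms plus $O_\PP(n^{-1})$ remainders. Note that \citet{Noh2013} gives pointwise rates only, so the representation must hold with integrable envelopes; I will use the sup-over-$\vartheta$ moment bounds for this.

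For $W_{n,3}$, the product of two $O_\PP(n^{-1/2})$ errors gives $W_{n,3}=O_\PP(n^{-1})$ after averaging the kernel, because $h^{-d}K$ averages to $O(1)$ against $p$. Hence $nh^{d/2}W_{n,3}=O_\PP(h^{d/2})=o_\PP(1)$.

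\textbf{Main obstacle.} The cross term $W_{n,2}$ is the hardest step. A crude bound yields only $O_\PP(n^{-1/2})\cdot O_\PP(n^{-1/2})$, because $\frac1n\sum_i \varepsilon_i(\cdot)$ is itself $O_\PP(n^{-1/2})$. That bound gives $nh^{d/2}W_{n,2}=O_\PP(h^{d/2})=o_\PP(1)$, but only provided the inner kernel-smoothed average
\[
\frac{1}{n}\sum_i h^{-d}K\Big(\frac{X_i-X_j}{h}\Big)\varepsilon_i c_{X,i}
\]
is handled as an $O_\PP(n^{-1/2})$ quantity uniformly. My plan is to insert the linear representation, which turns $W_{n,2}$ into a U-statistic of order three. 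I will then compute its second moment directly and show it is $O(n^{-2})$, using the mean-zero property of $\varepsilon$ to kill the diagonal-free terms; this again gives $nh^{d/2}W_{n,2}=o_\PP(1)$. Combining the three parts with Slutsky's lemma proves \eqref{sigmahata}.

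\textbf{Variance estimator.} Replacing $e_i$ by $\varepsilon_i$ and $\hat g_i$ by $c_{X,i}$ in $\hat\sigma_{0,n}^2$ costs $o_\PP(1)$, by the same expansions together with bounded $K$. The resulting U-statistic has expectation
\[
2\int K^2\int \sigma^4 c_X^2 p^2 + o(1),
\]
here meaning $(\sigma^2)^2 c_X^2 p^2$. Its variance is $O((nh^d)^{-1})\to 0$ by the fourth-moment assumptions, which gives consistency of $\hat\sigma_{0,n}^2$.
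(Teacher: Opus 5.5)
Your architecture is the paper's. You isolate the degenerate $U$-statistic with kernel $h^{-d}K_{ij}\varepsilon_i\varepsilon_j c_X(U_i)c_X(U_j)$ and apply Hall's theorem. The paper checks Hall's condition via Zheng's Lemma 3.3a; your direct orders $\E G_n^2=O(h^{-d})$ and $\E H_n^4=O(h^{-3d})$ are correct. You show the estimation-error terms are $O_{\PP}(1/n)$ using uniform $\sqrt{n}$-rates. You control the cross term through the second moment of a third-order $U$-statistic, which is the paper's $E_{n1}^{(1)}/E_{n1}^{(2)}$ split together with Zheng's Lemma 3.3b. You treat $\hat\sigma_{0,n}^2$ by the same expansion.

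The step that does not go through is the variance constant. Your kernel carries $c_X(U_i)c_X(U_j)$, so $H_n^2$ carries $c_X(U_i)^2c_X(U_j)^2$, and the substitution gives
\[
\E H_n^2 = h^{-d}\Bigl(\int K(u)^2\du\int \sigma^2(x)^2\,c_X(\F(x))^4\,p(x)^2\dx+o(1)\Bigr),
\]
not the $c_X(\F(x))^2$ version you wrote. So $nh^{d/2}W_n$ has limiting variance $2\int K(u)^2\du\int\sigma^2(x)^2c_X(\F(x))^4p(x)^2\dx$. For the same reason, the leading $U$-statistic in $\hat\sigma_{0,n}^2$, which carries $\hat c_X(\hat U_i)^2\hat c_X(\hat U_j)^2$, converges to this $c_X^4$ quantity, not to the $c_X^2$ expression you claim.

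The displayed $\sigma_0^2$ is therefore correct only when $c_X\equiv 1$ (e.g. $d=1$). The paper's own computation makes the same slip, writing $c_X(\F(x_1))c_X(\F(x_2))$ inside $\E(H_n(Z_1,Z_2)^2)$ where the squares belong. What your argument actually establishes, and what you should state, is:
\begin{itemize}
\item asymptotic normality of $nh^{d/2}W_n$ with the $c_X^4$ variance;
\item consistency of $\hat\sigma_{0,n}^2$ for that variance.
\end{itemize}
The studentized statistic and the test \eqref{testclass} are unaffected.

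Two smaller points.
\begin{itemize}
\item The expansion \eqref{item6} gives only an $o_{\PP}(n^{-1/2})$ remainder for $\hat\vartheta_n$. So the uniform $O_{\PP}(n^{-1})$ remainder you claim for the linearization is not available in the $\vartheta$-direction. You do not need it: $\hat\vartheta_n-\vartheta_0$ is a single random vector that factors out of the double sum. What remains is $(\hat\vartheta_n-\vartheta_0)^\top$ times a centered $U$-statistic of order $O_{\PP}(n^{-1/2})$, which is exactly where the paper invokes Lemma \ref{lemma5.1}.
\item Rather than expanding $\hat m-m$ itself, expand $e_i\hat c_X(\hat U_i)$ via \eqref{pd5}. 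The weight $\hat c_X$ cancels the denominator of $\hat m$, so you only need $\boldsymbol{e}-\hat{\boldsymbol{e}}$ and $m(X_i)\bigl(\hat c_X(\hat U_i)-c_X(U_i)\bigr)$, and no lower bound on $c_X$ is required.
\end{itemize}
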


This weak convergence behavior can be employed to implement a goodness-of-fit test for the classical hypothesis \eqref{null1}. More specifically, it follows from \eqref{sigmahata} and \eqref{sigmahat} that 
\[
  \frac{nh^{d/2}}{\hat{\sigma}_{0,n}}W_n \overset{\D}{\longrightarrow} \NN(0,1),  
\]
and therefore we propose to reject the null hypothesis \eqref{null1} whenever
\begin{align}
    \label{testclass}
    nh^{d/2} W_n > \hat \sigma_{0,n} u_{1-\alpha}\,,
\end{align}
where $u_{1-\alpha}$ denotes the $(1-\alpha)$-quantile of the standard normal distribution. This yields an asymptotic level $\alpha$-test for the hypothesis in \eqref{null}, and by Theorem \ref{thm2.1} this test is also consistent. 

\subsection{Behavior under fixed alternatives} \label{sec3.2}

We now assume that the copula model is misspecified, i.e., there is no $\vartheta \in \Theta$ with $c = 
c(\cdot, \vartheta)$, and investigate the weak convergence of the statistic $W_n$.
More specifically, we show that under the alternative $H_1 \negmedspace: c \not \in \mathcal{C},$ the statistic $\sqrt{n}(W_n - \E(W_n))$ converges weakly to a centered normal limit distribution, and we derive an approximation for $\E(W_n) $. Recall the definition of the best-possible parameter $\vartheta^* \in \Theta$ as in \eqref{theta_star}. In what follows, we use the shorthand notation $\Delta^*(x) := \Delta(x)c_X^*(\F(x))$. 

\begin{theorem} \label{thm4.1}
    Under the aforementioned regularity assumptions \emph{\ref{item1}--\ref{item5},} the test statistic $W_n$ as in \eqref{det1} has the following asymptotic behavior under fixed alternatives: 
    \[
        \sqrt{n}\left(W_n - \mu_n\right) \overset{\D}{\longrightarrow} \NN\left(0, 4\bigl(\sigma_1^2 + \sigma_2^2\bigr)\right),
    \]
    where 
    \begin{align*}
        \mu_n &:= \iint K(u)\Delta^*(u+vh)\Delta^*(v)p(u+vh)p(v)\du\dv\,, \\
        \sigma_1^2 &:= \Var(\Delta^*(X_1)^2p(X_1))\,,
    \end{align*}
    and 
    \[
        \sigma_2^2 := \E\bigl(\M_1(X_2)\M_1(X_3)\Delta^*(X_2)\Delta^*(X_3)p(X_2)p(X_3) + \varepsilon_1^2\Delta^*(X_1)^2p(X_1)^2\bigr)\,,
    \]
    with 
\begin{align}
    \M_k(x) =~& c_X(\mathbf{u}, \vartheta^*) \left(\eta_k^\top \partial_{\vartheta} m^*(x) + \sum_{l=1}^d \Bigl(\mathbf{1}\{X_{k}^{(l)} \leq x_l\} - F_l(x_l)\Bigr)\partial_{x_l} m^*(x) \partial_{u_l} F_l^{-1}(u_l)\right) \nonumber \\
    &- \int \bigl(\mathbf{1}\{Y_k \leq y\} - F_0(y)\bigr)c(F_0(y), \F(x), \vartheta^*)\dy\,. \label{mkx}
\end{align}
\end{theorem}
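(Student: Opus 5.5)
The plan is to linearize $W_n$ around an oracle $U$-statistic and read off the variance from Hoeffding projections. First write the residual as
\[
    e_i = \varepsilon_i + \Delta(X_i) - \bigl(\hat m(X_i) - m^*(X_i)\bigr).
\]
Multiplying by $\hat c_X(\hat U_i)$ gives
\[
    e_i\hat c_X(\hat U_i) = \varepsilon_i c_X^*(U_i) + \Delta^*(X_i) - R_i + (\text{remainder}),
\]
where $R_i$ collects the first-order effect of replacing $(F_0,\F,\vartheta^*)$ by $(\hat F_0,\hat\F,\hat\vartheta_n)$. Expanding $\hat c_X(\hat U_i)$ and $c_X(\hat\F(x),\hat\vartheta_n)\hat m(x)=\int y\,c(\hat F_0(y),\hat\F(x),\hat\vartheta_n)\,\dFhat_0(y)$ by a Taylor argument, we have $R_i=n^{-1}\sum_{k}\M_k(X_i)$ up to $o_\PP(n^{-1/2})$ uniformly. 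Here $\M_k$ is as in \eqref{mkx}. The expansion uses the linearization \eqref{item6} of $\hat\vartheta_n$, the empirical-process terms $\mathbf 1\{X_k^{(l)}\le x_l\}-F_l(x_l)$ (via the chain rule through $F_l^{-1}$), and an integration by parts for the $\hat F_0$ term. The moment conditions \ref{item3}--\ref{item4} together with the DKW inequality and $\sqrt n$-consistency of $\hat\vartheta_n$ control the second-order remainders. This step should be essentially the same bookkeeping already needed for Theorem~\ref{thm2.1} and Theorem~\ref{thm3.1}, so I would reuse those expansions.

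Substituting into \eqref{det1} decomposes $W_n$ into a finite sum of terms. The leading term is
\[
    W_n^{(1)}=\frac{1}{n(n-1)}\sum_{i\neq j}K_h(X_i-X_j)\,\xi_i\xi_j, \qquad \xi_i=\varepsilon_i c_X^*(U_i)+\Delta^*(X_i).
\]
The cross terms involve $R_i$. Under the alternative, $\Delta^*\neq0$, so everything is of order $n^{-1/2}$ and the degenerate parts are negligible. For $W_n^{(1)}$ the Hoeffding decomposition gives $\E W_n^{(1)}=\mu_n$, because $\E(\varepsilon\mid X)=0$ and the substitution $x_i=u+vh$ applies. Its linear projection is $\tfrac{2}{n}\sum_i(g(X_i,\varepsilon_i)-\mu_n)$, where
\[
    g(x,\varepsilon)=\bigl(\varepsilon c_X^*(\F(x))+\Delta^*(x)\bigr)\bigl(\Delta^*p\bigr)(x)+o(1)
\]
in $L^2$ as $h\to0$, using continuity of $\Delta^*p$ from \ref{item1}. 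The degenerate remainder has variance $O(n^{-2}h^{-d})=o(n^{-1})$ as long as $nh^d\to\infty$; I will assume the bandwidth condition of Theorem~\ref{thm3.1}. The cross term
\[
    -\frac{2}{n(n-1)}\sum_{i\ne j}K_h(X_i-X_j)\,\xi_j R_i
\]
is a third-order $V$-statistic in $(i,j,k)$. Its projection onto the index $k$ is $-\tfrac{2}{n}\sum_k\E\bigl(\M_k(X)\Delta^*(X)p(X)\mid Z_k\bigr)$ up to $o_\PP(n^{-1/2})$. The term quadratic in $R$ is $O_\PP(n^{-1})$.

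The sum of the two linear projections then gives $\sqrt n(W_n-\mu_n)$ as $n^{-1/2}\sum_k 2\psi(Z_k)+o_\PP(1)$. The CLT yields normality with variance $4\Var\psi$. Expanding $\Var\psi$ produces $\Var(\Delta^{*2}p)=\sigma_1^2$, the term $\E(\varepsilon^2\Delta^{*2}p^2)$ (with the $c_X^*$ factor absorbed as in the paper's notation), and the term $\E\bigl(\M_1(X_2)\M_1(X_3)\Delta^*(X_2)\Delta^*(X_3)p(X_2)p(X_3)\bigr)$, which is $\Var$ of the $\M$-projection written with independent copies $X_2,X_3$. Cross-covariances between these pieces need to be checked; some vanish by $\E(\varepsilon\mid X)=0$, and the remaining ones should be combined into $\sigma_2^2$ as stated.

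The main obstacle is the uniform linearization of $\hat m-m^*$ and $\hat c_X(\hat U_i)$ at the random points $X_i$. Several issues arise there: the estimated marginals enter at the sample points themselves, which creates dependence with index $i$; the unbounded $Y$ enters the integral; and the remainders must be $o_\PP(n^{-1/2})$ after multiplication by the kernel-weighted sum. I would first handle this by bounding the remainder averages in $L^1$ using the sup-over-$\Theta$ moment conditions \ref{item3}--\ref{item4} and $\sup|\hat F-F|=O_\PP(n^{-1/2})$. The self-index terms $k\in\{i,j\}$ contribute only $O(n^{-1})$.
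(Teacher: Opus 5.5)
Your overall architecture (linearize $e_i\hat c_X(\hat U_i)$, collect all H\'ajek projections into one influence function $\psi$, apply the CLT) is sound. It is also more transparent than the paper's route, which computes $\Var(\sqrt n\,\widetilde W_{n3}^{(3)})$ and $\Var(\sqrt n\,\overline W_{n4})$ separately and then invokes Lemma~\ref{lemma4.5}.

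\textbf{The cross-covariances cannot be absorbed into $\sigma_2^2$.} Set $\phi(Z_k):=\E\bigl(\M_k(X)\Delta^*(X)p(X)\mid Z_k\bigr)$, with $X$ an independent copy. Your own projection is $\psi=2\bigl[(\varepsilon c_X^*(U)+\Delta^*(X))\Delta^*(X)p(X)-M^2\bigr]-2\phi(Z)$, hence
\[
\tfrac14\Var(\psi)=\sigma_1^2+\E\bigl(c_X^*(U)^2\varepsilon^2\Delta^*(X)^2p(X)^2\bigr)+\E\bigl(\phi(Z)^2\bigr)-2\E\bigl(\Delta^*(X)^2p(X)\phi(Z)\bigr)-2\E\bigl(c_X^*(U)\,\varepsilon\,\Delta^*(X)p(X)\phi(Z)\bigr).
\]
Only the covariance between $\Delta^{*2}p$ and $\varepsilon c_X^*\Delta^*p$ is killed by $\E(\varepsilon\mid X)=0$. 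The last two terms are not part of $\sigma_2^2$ and do not vanish in general, because $\M_1$ depends on $(X_1,Y_1)$ through $\eta_1$, through $\mathbf 1\{X_1^{(l)}\le x_l\}$ and through $\mathbf 1\{Y_1\le y\}$. Moreover, for $d\ge2$ the weight $c_X^*(U)^2$ in the $\varepsilon^2$-term cannot simply be ``absorbed''.

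A minimal check: take $d=1$ and the degenerate family $\{c\equiv1\}$. Then $\hat m\equiv\bar Y$ and $\M_k(x)=Y_k-\E Y$. A direct computation gives asymptotic variance $4\Var\bigl((Y-\E Y)(\Delta(X)p(X)-a)\bigr)$, where $a=\E(\Delta(X)p(X))$. For errors with variance $s^2$ independent of $X$, this differs from $4(\sigma_1^2+\sigma_2^2)$ by $-8a\bigl(as^2+\E(\Delta(X)^3p(X))\bigr)$. That difference is nonzero whenever $a\neq0$, except for at most one value of $s^2$.

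\textbf{The expansion of $R_i$ is incomplete for $d\ge2$.} You assert $R_i=n^{-1}\sum_k\M_k(X_i)+o_\PP(n^{-1/2})$, but $\M_k/c_X^*$ is the influence function of $\hat m$ alone. Expanding $e_i\hat c_X(\hat U_i)$ leaves the extra first-order piece $(\varepsilon_i+\Delta(X_i))\bigl(\hat c_X(\hat U_i)-c_X^*(U_i)\bigr)$, which is of order $n^{-1/2}$, not $o_\PP(n^{-1/2})$. After kernel weighting it has projection $\E\bigl(\Delta^*(X)\Delta(X)p(X)\lambda_k(X)\mid Z_k\bigr)$, where $\lambda_k(x)=\partial_{\mathbf u}c_X^*(\F(x))^\top(\mathbf 1\{X_k\le x\}-\F(x))+\partial_\vartheta c_X^*(\F(x))^\top\eta_k$. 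This projection is nonzero under the alternative.

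\textbf{The paper does not supply these arguments either.} It reaches $4(\sigma_1^2+\sigma_2^2)$ in three places where the same terms are lost:
\begin{itemize}
\item in the proof of Lemma~\ref{lemma4.4} it drops the $\varepsilon_i\M_i(X_j)$ terms in the case $s=i$;
\item in Lemma~\ref{lemma4.5} it discards tuples of distinct indices, although $\hat m-m^*$ depends on all observations;
\item it replaces $\hat c_X$ at cost $O_\PP(1/n)$ via Lemma~\ref{lemmareplace1a}, whose mean-zero kernel argument holds only under $H_0$.
\end{itemize}
Carried out honestly, your route gives asymptotic normality with variance $4\Var(\psi)$, including the cross terms and the $\hat c_X$ contribution. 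That is not the stated formula in general, so the step ``combine into $\sigma_2^2$ as stated'' is where your proof cannot be completed.
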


If $h\to 0$, we have 
\begin{align} \label{hd3}
  \mu_n  \approx  M^2 = \int_{\R^d} \Delta(x)^2c_X^*(\F(x))^2p(x)^2\dx,
\end{align}
where $\Delta = m - m^*$.
Therefore, we can use  Theorem \ref{thm4.1} for uncertainty quantification, when one applies $W_n$ for the estimation of $M^2$, if the postulated assumption \eqref{model} of the copula is not satisfied. As pointed out in the introduction, this will usually be the case in applications as the assumption is commonly made in the hope that the implied copula regression function is close to the true regression function $m$, so that reasonable estimation avoiding the curse of dimensionality is possible. However, the estimation of the limiting variance in Theorem~\ref{thm4.1}, which would be required for such an approach, is extremely difficult and we do not discuss the construction of such estimators here. 
Instead, we will develop a self-normalized version of Theorem \ref{thm4.1}, which avoids this estimation.

 \section{Pivotal confidence intervals and testing of relevant hypotheses} 
 \label{sec4}
  \def\theequation{4.\arabic{equation}}	
   \setcounter{equation}{0}

Self-normalization is a widely used concept, which scales a statistic by a measure of its own variability, thereby avoiding explicit variance estimation. This leads to reliable  inference in situations, where the estimation of the asymptotic variance is difficult, see, e.g., \cite{lobato2001} or \cite{shao2010self}.  We will use this concept here to develop pivotal uncertainty quantification for the statistic $W_n$ using  a sequential version of Theorem~\ref{thm4.1}.  For this purpose,  we fix  $\delta \in (0,1)$ and define for $t \in [\delta, 1]$ by
 \begin{align*}
    \hat{F}_{0\nt}(y) &:= \frac{1}{\nt}\sum_{j=1}^{\nt} \textbf{1}\{Y_j \leq y\}\,, \quad \hat{\F}_{\nt}(x) := \frac{1}{\nt}\sum_{j=1}^{\nt} \textbf{1}\{X_j \leq x\}\,,
\end{align*}
the empirical distribution functions from the sample $(X_1, Y_1), \ldots, (X_{\nt}, Y_{\nt}),$ and by 
\[
    \hat{\vartheta}_{\nt} := \arg \max_{\vartheta \in \Theta} \sum_{i=1}^n \log\left(c(\hat{F}_{0{\nt}}(Y_i), \hat{\F}_{\nt}(X_i), \vartheta)\right) 
\]
the corresponding sequential estimator of the parameter $\vartheta$. With these notations we obtain a sequential estimator of the regression function
\[
    \hat{m}_{\nt} (x) := \frac{1}{c_X(\hat{\F}_{\nt}(x), \hat{\vartheta}_{\nt})}\int yc(\hat{F}_{0\nt}(y), \hat{\F}_{\nt}(x), \hat{\vartheta}_{\nt})\dFhat_{0\nt}(y) \, 
\]
and corresponding residuals 
\begin{align*}
    e_{i\nt} &:= Y_i - \hat{m}_{\nt}(X_i)\, \quad \quad  (i=1, \ldots , \nt ).
\end{align*}
Next, we define  $\hat{U}_{i\nt} = \mathcal{F}_{\nt} (X_i)$, and by 
\begin{equation}
    W_{\nt} := \frac{1}{\nt(n-1)h^d}\sum_{1\leq i\neq j\leq \nt} {K \Big ( {X_i - X_j \over h }\Big ) } e_{i\nt}e_{j\nt}c_X(\hat{U}_{i\nt}, \hat{\vartheta}_{\nt})c_X(\hat{U}_{j\nt}, \hat{\vartheta}_{\nt}) \label{wnt}
\end{equation}
we obtain a sequential analogue of the statistic $W_n$ in \eqref{det1}.

The assumption $t \in [\delta,1]$ keeps $t$ bounded away from the origin, which is useful to avoid degenerate cases.

\begin{theorem} \label{thm4.6}
Assume that  the conditions of Theorem~\emph{\ref{thm4.1}} are satisfied and define  for $\delta \in (0,1)$\emph{:} 
\begin{align} \label{hd2a}
    \W_n &:= \int_{\delta}^1 |W_{\nt} - tW_n|\dt\,,
\end{align}
where $W_{\nt}$ is given by \eqref{wnt}. Then: 
\begin{align}
    \label{hd2}
{W_n - \mu_n \over \W_n}
\overset{\D}{\longrightarrow} 
W := \frac{B(1)}{\int_\delta^1 |B(t) - tB(1)|\dt}\,,
\end{align}
where $\{B(t)\}_{t \in [0,1]}$ denotes a standard Brownian motion. 
\end{theorem}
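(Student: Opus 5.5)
The plan is to prove a sequential (functional) version of Theorem~\ref{thm4.1} and then apply the continuous mapping theorem. Concretely, I would show that the process
\[
  \Big\{ \sqrt{n}\, t\bigl(W_{\nt} - \mu_n\bigr) \Big\}_{t\in[\delta,1]} \rightsquigarrow \Big\{ \tau\, B(t)\Big\}_{t\in[\delta,1]}, \qquad \tau^2 := 4(\sigma_1^2+\sigma_2^2),
\]
in $\ell^\infty[\delta,1]$. Once this holds, we have $\sqrt{n}(W_n-\mu_n) = \sqrt{n}\cdot 1\cdot(W_{n}-\mu_n)$ at $t=1$. Moreover,
\[
  \sqrt{n}\bigl(W_{\nt}-tW_n\bigr) \approx \sqrt{n}\,t(W_{\nt}-\mu_n) - t\sqrt{n}(W_n-\mu_n) + R_n(t).
\]
Here the normalization $\nt(n-1)$ in \eqref{wnt} has to be matched with a factor $t$; I would check carefully that the definition produces exactly $tW$-type scaling up to $O(1/n)$. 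The map $f\mapsto \bigl(f(1),\int_\delta^1|f(t)-tf(1)|\dt\bigr)$ is continuous on $\ell^\infty[\delta,1]$. Since $\int_\delta^1|B(t)-tB(1)|\dt>0$ almost surely, the ratio functional is continuous on the support of the limit, and $\tau$ cancels. This gives \eqref{hd2}.

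To obtain the functional limit, I would retrace the proof of Theorem~\ref{thm4.1} with $n$ replaced by $\nt$, uniformly in $t\in[\delta,1]$. First, I would expand the residuals as
\[
  e_{i\nt} = \varepsilon_i + \Delta(X_i) - \bigl(\hat m_{\nt}(X_i)-m^*(X_i)\bigr)
\]
and linearize $\hat m_{\nt}-m^*$ and $c_X(\hat U_{i\nt},\hat\vartheta_{\nt})$. For this I would use the sequential expansion $\hat\vartheta_{\nt}-\vartheta^* = \nt^{-1}\sum_{k\le \nt}\eta_k + o_{\PP}(n^{-1/2})$ uniformly in $t$, together with the uniform Bahadur-type representations of the sequential empirical distribution functions $\hat F_{0\nt},\hat\F_{\nt}$. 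Both follow from the functional CLT for sequential empirical processes (Kiefer process), and the leading term yields exactly the function $\M_k(x)$ in \eqref{mkx}. Second, substituting this into \eqref{wnt} decomposes $t(W_{\nt}-\mu_n)$ into a sequential U-statistic with kernel built from $(\varepsilon_i+\Delta(X_i))c_X^*\cdots$, plus sums of the form $\sum_{i,j,k\le\nt}$ involving $\M_k$. By Hoeffding decomposition, the leading part is a partial sum $n^{-1}\sum_{i\le \nt}\xi_i$ with i.i.d.\ centered
\[
  \xi_i = 2\bigl(\Delta^*(X_i)^2p(X_i)-\E\bigr) + 2\varepsilon_i\Delta^*(X_i)p(X_i) + 2\int \M_i(x)\Delta^*(x)p(x)^2\dx,
\]
whose variance is $4(\sigma_1^2+\sigma_2^2)$. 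Donsker's theorem then gives $\sqrt n\, n^{-1}\sum_{i\le\nt}\xi_i\rightsquigarrow \tau B(t)$.

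The main obstacle is uniformity in $t$ of all remainder terms: the degenerate U-statistic parts, the second-order Taylor remainders in $\vartheta$ and $u$, and the bias $\E$-terms that are replaced by $\mu_n$. For the degenerate U-process part, I would use a maximal inequality for partial sums of degenerate U-statistics, since they form a martingale in $\nt$. Doob's inequality then bounds $\max_{k\le n}|\sum_{i\ne j\le k}\cdot|$ by its value at $k=n$, which is $O_\PP(n h^{-d/2})$, hence $o_\PP(n^{3/2})$ after normalization. For the Taylor remainders, I would use $\sup_t|\hat\vartheta_{\nt}-\vartheta^*|=O_\PP(n^{-1/2})$ and $\sup_{t,x}|\hat F_{\nt}-F|=O_\PP(n^{-1/2})$, which hold because $t\ge\delta$; these combine with the moment conditions \ref{item3}--\ref{item4}. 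Tightness beyond finite-dimensional convergence then follows from these uniform negligibility statements plus Donsker.
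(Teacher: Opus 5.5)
Your plan is correct and is essentially the paper's proof: a uniform-in-$t$ linearization of the sequential statistic, H\'ajek projection onto partial sums with Doob's inequality controlling the martingale and degenerate remainders (the paper's Lemmas~\ref{lemma5.2} and~\ref{lemma4.7}), and then Donsker and the continuous mapping theorem. Since $\tau$ cancels in the ratio, you only need $\tau>0$, not the exact form of $\xi_i$ or of $\tau^2$. The one correction is the centering you already flagged. Because \eqref{wnt} is normalized by $\nt(n-1)$, one has $W_{\nt}=\frac{\nt-1}{n-1}U_{\nt}$, where $U_{\nt}$ is the ordinary $U$-statistic of the first $\nt$ observations. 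So the process that converges to $\tau B(t)$ is $\sqrt{n}(W_{\nt}-t\mu_n)=\sqrt{n}\,t(U_{\nt}-\mu_n)+O_{\PP}(n^{-1/2})$; by contrast, $\sqrt{n}\,t(W_{\nt}-\mu_n)$ diverges for $t<1$ when $M^2>0$. With this centering, $W_{\nt}-tW_n=(W_{\nt}-t\mu_n)-t(W_n-\mu_n)$ holds exactly, as in the paper.
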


We now discuss two statistical consequences of Theorem \ref{thm4.6}. For this purpose, let  $q_\alpha(W)$ denote the $\alpha$-quantile of the distribution of the random  variable $W$ defined in \eqref{hd2} and consider the interval 
\begin{align}
    \label{conf}
  \hat I_n := [W_n - q_{1-\alpha/2}(W)\W_n,\,\, W_n + q_{1-\alpha/2}(W)\W_n]\,.
\end{align}
\begin{corollary}
\label{thm4.6a}
For $\alpha \in (0,1),$  the interval $\hat I_n$ defines a pivotal and asymptotic $(1-\alpha)$-confidence interval for the bias $\mu_n$ appearing in Theorem \emph{\ref{thm4.1}}. 
\end{corollary}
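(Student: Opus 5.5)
The corollary follows from Theorem~\ref{thm4.6} and the continuous mapping theorem. Write $T_n := (W_n-\mu_n)/\W_n$. By Theorem~\ref{thm4.6}, $T_n \overset{\D}{\longrightarrow} W$. The coverage event is
\[
\{\mu_n \in \hat I_n\} = \{|W_n-\mu_n| \le q_{1-\alpha/2}(W)\W_n\} = \{|T_n|\le q_{1-\alpha/2}(W)\},
\]
where the last equality uses $\W_n>0$, which I verify below. Hence
\[
\PP(\mu_n\in\hat I_n) = \PP(|T_n|\le q_{1-\alpha/2}(W)),
\]
and it remains to show that this converges to $\PP(|W|\le q_{1-\alpha/2}(W)) = 1-\alpha$.

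\emph{Symmetry of $W$.} The process $B\mapsto -B$ preserves the law of Brownian motion, and $W$ is odd in $B$: the numerator $B(1)$ changes sign while the denominator $\int_\delta^1|B(t)-tB(1)|\dt$ is invariant. So $W \overset{\D}{=} -W$, and therefore
\[
\PP(|W|\le q_{1-\alpha/2}(W)) = 1-2\,\PP(W>q_{1-\alpha/2}(W)) = 1-\alpha,
\]
provided the distribution function of $W$ is continuous at $\pm q_{1-\alpha/2}(W)$.

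\emph{Continuity of the law of $W$.} The bridge $B(t)-tB(1)$ is independent of $B(1)$. Conditionally on the bridge, $W$ is a Gaussian variable $B(1)\sim\NN(0,1)$ divided by a positive constant. The denominator is a.s.\ positive, since a Brownian bridge does not vanish identically on $[\delta,1]$. Hence $W$ has a mixed-normal law with a continuous distribution function. By Portmanteau, $\PP(|T_n|\le q)\to\PP(|W|\le q)$ for every $q$, in particular for $q=q_{1-\alpha/2}(W)$.

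\emph{Positivity of $\W_n$ and pivotality.} The proof of Theorem~\ref{thm4.6} shows the joint convergence $\sqrt n(W_n-\mu_n,\W_n)\to \tau\,(B(1),\int_\delta^1|B(t)-tB(1)|\dt)$ with $\tau^2=4(\sigma_1^2+\sigma_2^2)>0$ under fixed alternatives. The limit of $\sqrt n\,\W_n$ is a.s.\ positive, so $\PP(\W_n=0)\to0$, and the event where $\W_n=0$ does not affect the asymptotic coverage. Pivotality holds because the limit law of $W$ does not involve $\tau$ or any other nuisance parameter: the unknown variance cancels in the ratio. So the quantile $q_{1-\alpha/2}(W)$ is universal and can be tabulated by simulating Brownian motion.

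The main delicate point is this positivity and nondegeneracy issue. It requires $\tau^2>0$, that is, a genuine fixed alternative. In the degenerate case $M^2=0$ the $\sqrt n$ scaling collapses, and the corollary must be read under the assumptions of Theorem~\ref{thm4.1}, which guarantee $\tau^2>0$. Everything else is a direct consequence of Theorem~\ref{thm4.6}.
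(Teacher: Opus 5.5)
Your argument is correct and follows the paper's route. The paper treats the corollary as an immediate consequence of Theorem~\ref{thm4.6} and the pivotal limit $W$. You make explicit what it leaves implicit: the symmetry and continuity of the law of $W$, which justify the two-sided quantile $q_{1-\alpha/2}(W)$, and $\PP(\W_n=0)\to0$, which follows from the joint convergence in the proof of Theorem~\ref{thm4.6}.

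One small correction: a fixed alternative $c\notin\mathcal{C}$ does not by itself guarantee $M^2>0$, let alone $\sigma_1^2+\sigma_2^2>0$. So the nondegeneracy you rely on is not a consequence of the assumptions of Theorem~\ref{thm4.1}; it is an implicit assumption that Theorem~\ref{thm4.6} already requires (compare the hypothesis $\sigma^2>0$ in Lemma~\ref{lemma5.2}).
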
 

\begin{remark} \label{rem4.3} Under appropriate conditions to the regression framework and the kernel, the above interval $\hat{I}_n$ is an asymptotic $(1- \alpha)$-confidence interval for the measure of deviation $M^2$ itself. If the bandwidth $h$ is chosen by the bias-variance tradeoff rule of thumb $h = \Theta(n^{-1/(4+d)}),$ then, in order to make the discrepancy $\sqrt{n}(\mu_n - M^2)$ vanish, we need a kernel of order $r > 3 + d/2$ and we require differentiability up to order $2 + \lceil d/2\rceil$ for the function $\Delta,$ the inner copula density $c_X^*$, and the density $p$ of $X$. Indeed, this is a consequence of a Taylor expansion  
\begin{align*}
    \mu_n - M^2 &= \iint K(u)(g(u + vh) - g(v))g(v)\du\dv \\
    &= h \int K(u)u\du \int g(v)g'(v)\dv + \frac{h^2}{2} \int K(u)u^2\du \int g(v)g''(v)\dv \\
    & \quad + \ldots + h^{r-1} \int K(u)u^{r-1}\du \int g(v)g^{(r-1)}(v)\dv + O(h^r)\,,
\end{align*}
where we used the notation $g(x) := \Delta^*(x)p(x)$. The order of the kernel ensures that aside from the $O(h^r)$ remainder term,  all terms vanish. The requirement $h^r = o(n^{-1/2}),$ employing the rule of thumb $h = \Theta(n^{-1/(4+d)}),$ yields the previously stated kernel order and differentiability assumptions.
\end{remark}

Finally, we consider the problem of testing the relevant hypotheses
\begin{subequations}
\begin{align}
\label{hd4}     
    H_0 : M^2 \leq \boldsymbol{\Delta} \text{~~ versus ~~} H_1: M^2 > \boldsymbol{\Delta}\,,
\end{align}
where $\boldsymbol{\Delta}$ denotes a prespecified  threshold. We propose to reject the null hypothesis in \eqref{hd4} whenever
\begin{align}
\label{hol12}
    W_n > \boldsymbol{\Delta} + q_{1-\alpha}(W) \W_n\,.
\end{align}
Similarly, for testing the hypotheses 
\begin{align} \label{hd6}
    H_0 : M^2 > \boldsymbol{\Delta} \text{~~versus ~~} H_1: M^2 \leq \boldsymbol{\Delta}\,,
\end{align}  
we propose to reject the null hypothesis whenever 
\begin{align*} \label{hd7}
    W_n \leq \boldsymbol{\Delta} + q_{\alpha}(W) \W_n\,.
\end{align*} 
\end{subequations}

\begin{corollary} \label{correjrate}
Let the assumptions of Theorem ~\emph{\ref{thm4.6}} and Remark~\emph{\ref{rem4.3}} be satisfied.
\begin{subequations}
\begin{itemize}
    \item [(a)] For the  test \eqref{hd4} we have 
    \begin{align}
    \lim_{n\rightarrow\infty}
    \mathbb{P }\big( W_n > \boldsymbol{\Delta} + q_{1-\alpha}(W) \W_n \big) = \begin{cases}
    1,      & \text{if } M^2  >  \boldsymbol{\Delta} \\ 
    \alpha, & \text{if } M^2  = \boldsymbol{\Delta} \\
    0,      & \text{if } M^2 < \boldsymbol{\Delta}
    \end{cases}.
    \end{align}
    \item[(b)] For the test \eqref{hd6} we have 
    \begin{align}
    \lim_{n\rightarrow\infty}
    \mathbb{P }\big(W_n \leq \boldsymbol{\Delta} +  q_{\alpha}(W) \W_n \big) =     
    \begin{cases}
1,      & \text{if } M^2 < \boldsymbol{\Delta} \\ 
\alpha, & \text{if } M^2 = \boldsymbol{\Delta} \\
0,      & \text{if } M^2 > \boldsymbol{\Delta}
    \end{cases}.
    \end{align}
\end{itemize}
\end{subequations}
\end{corollary}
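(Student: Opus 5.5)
The plan is to reduce everything to Theorem~\ref{thm4.6}, using Remark~\ref{rem4.3} to replace $\mu_n$ by $M^2$, and then to do a case analysis on the sign of $M^2-\boldsymbol{\Delta}$. First I will check that, under the assumptions of Remark~\ref{rem4.3}, $\sqrt{n}(\mu_n - M^2)\to 0$. I also need the rate of $\W_n$. From the proof of Theorem~\ref{thm4.6}, which proceeds through a sequential invariant principle for $\sqrt{n}(W_{\nt}-t\mu_n)$, we get $\sqrt{n}\,\W_n \overset{\D}{\longrightarrow} \tau\int_\delta^1|B(t)-tB(1)|\dt$ with $\tau^2 = 4(\sigma_1^2+\sigma_2^2)$. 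This limit is almost surely positive, so $\W_n = O_{\PP}(n^{-1/2})$ and $\PP(\W_n>0)\to 1$. Combining this with Theorem~\ref{thm4.6} and Slutsky's lemma gives
\[
T_n := \frac{W_n - M^2}{\W_n} = \frac{W_n-\mu_n}{\W_n} + \frac{\sqrt{n}(\mu_n - M^2)}{\sqrt{n}\,\W_n} \overset{\D}{\longrightarrow} W .
\]
This step needs $\tau^2>0$, i.e.\ nondegeneracy under the alternative $M^2>0$. I take this to be implicit in the statement, since case (a) is only nontrivial when $\boldsymbol{\Delta}>0$ and hence $M^2$ is near a positive value.

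For part (a), the rejection event can be rewritten, on $\{\W_n>0\}$, as
\[
\{W_n > \boldsymbol{\Delta} + q_{1-\alpha}(W)\W_n\} = \Bigl\{T_n > q_{1-\alpha}(W) + \frac{\boldsymbol{\Delta}-M^2}{\W_n}\Bigr\}.
\]
\textbf{Case $M^2=\boldsymbol{\Delta}$.} The probability is $\PP(T_n>q_{1-\alpha}(W))\to \PP(W>q_{1-\alpha}(W))=\alpha$. This uses that $W$ has a continuous distribution: $B(1)$ is Gaussian and independent of the bridge $B(t)-tB(1)$, so conditionally on the denominator $W$ is a scale mixture of normals and therefore has no atoms.

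\textbf{Case $M^2>\boldsymbol{\Delta}$.} Here $(\boldsymbol{\Delta}-M^2)/\W_n\to-\infty$ in probability, because $\W_n\to 0$ in probability and is positive. Since $T_n$ is tight, the probability tends to $1$.

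\textbf{Case $M^2<\boldsymbol{\Delta}$.} The shift tends to $+\infty$ in probability, so the probability tends to $0$.

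A simpler route for the two strict cases is that $W_n\to M^2$ by Theorem~\ref{thm2.1} and $\W_n\to 0$ in probability, so the event is eventually determined by the sign of $M^2-\boldsymbol{\Delta}$. This avoids Theorem~\ref{thm4.6} there, although $\W_n=o_{\PP}(1)$ is still required.

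Part (b) is handled in the same way. The event $\{W_n\le \boldsymbol{\Delta}+q_\alpha(W)\W_n\}$ equals $\{T_n\le q_\alpha(W)+(\boldsymbol{\Delta}-M^2)/\W_n\}$. Its limit is $\PP(W\le q_\alpha(W))=\alpha$ at the boundary, by continuity of the law of $W$, and it is $1$ or $0$ in the strict cases according to the sign of $\boldsymbol{\Delta}-M^2$.

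The main obstacle is the first step: extracting from the proof of Theorem~\ref{thm4.6} the joint convergence that gives both $\W_n=O_{\PP}(n^{-1/2})$ and $\W_n>0$ with probability tending to one. The latter requires nondegeneracy of the limiting process, i.e.\ $\sigma_1^2+\sigma_2^2>0$. I would state this as part of the regularity conditions and invoke the continuous mapping theorem for the functional $(Z_n(\cdot))\mapsto \bigl(Z_n(1),\int_\delta^1|Z_n(t)-tZ_n(1)|\dt\bigr)$ applied to the sequential process.
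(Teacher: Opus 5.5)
Your proposal is correct and follows essentially the argument the paper has in mind. The paper gives no separate proof, but Remark~\ref{rem4.4}(a) sketches the same reduction: it combines Theorem~\ref{thm4.6}, the convergence $\sqrt{n}\,\W_n \overset{\D}{\longrightarrow} 2(\sigma_1^2+\sigma_2^2)^{1/2}V_\delta$ from its proof, and the bias condition $\sqrt{n}(\mu_n - M^2)\to 0$ of Remark~\ref{rem4.3}. You also spell out two points the paper leaves implicit, namely the nondegeneracy $\sigma_1^2+\sigma_2^2>0$ (which in particular forces $M^2>0$, in line with the fixed-alternative setting of Theorem~\ref{thm4.1}) and the continuity of the law of $W$.
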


\begin{remark} \label{rem4.4}  ~~~
\begin{itemize}
\item[(a)] We give a heuristic argument that the tests \eqref{hd4} and \eqref{hd6} are not very sensitive with respect to the value of $\delta$, which has been introduced in the statistic $\mathcal{W}_n$ to achieve numerical stability. For this purpose we introduce the notation ${V}_\delta = \int_\delta^1 |B(t) - t B(1)|\dt$ for the denominator of the random variable $W$ in \eqref{hd2} and make the dependence of the quantile $q_{1-\alpha}$ on the distribution of ${W} = {B}(1)/ V_\delta$ more explicit by using the notation $q_{1-\alpha} ({W}) $.
With these notations we obtain for the probability of rejection of the test \eqref{hd4}: 
\begin{align}  
\mathbb{P}\big(W_n > M^2 + q_{1-\alpha} ( {W} )  \mathcal{W}_n \big) 
& \approx 
 \mathbb{P}\left({B}(1) > \frac{\sqrt{n} 
  (\boldsymbol{\Delta} - M^2)}{2(\sigma_1^2 + \sigma_2^2)^{1/2}}  + {V }_\delta \cdot q_{1-\alpha}({B}(1)/ {V}_\delta ) \right),  \label{h40a}
\end{align}
where we have used the weak convergence 
$  \W_n  \overset{\D}{\longrightarrow}  2(\sigma_1^2 + \sigma_2^2)^{1/2} V_\delta $  for the approximation of the probabilities (this is shown in the proof of Theorem \ref{thm4.6}). Note that within the right-hand side of \eqref{h40a}, the constant $\delta $  appears only in the quantity $V_\delta  \cdot q_{1-\alpha}( {B}(1)/V_\delta)$.
However, for fixed $v>0$, we have $ {v }\cdot q_{1-\alpha}({B}(1)/v) = q_{1-\alpha}({B}(1))$. Therefore we expect that the probability in  \eqref{h40a} is not very sensitive with respect to the choice of $\delta$. 

Similar arguments can be made for properties of the test \eqref{hd6} and the  coverage probabilities of the confidence intervals \eqref{conf}, which also turn out to be relatively robust with respect to the choice of $\delta$. 
\item[(b)] We can replace the Lebesgue measure in the statistic \eqref{hd2a} by an arbitrary measure $\nu$
with  compact support contained in the interval $(0,1]$. Then the same arguments given in the proof of Theorem \ref{thm4.6} show that 
\[
    \frac{W_n - \mu_n}{\int_{\delta}^1 |W_{\nt} - tW_n|\mathrm{d}\nu(t)} \overset{\D}{\longrightarrow} \frac{B(1)}{\int_{\delta}^1 |B(t) - tB(1)|\mathrm{d}\nu(t)}\,, 
\]
and, arguing as in part (a), we conclude that the resulting test is not very sensitive with respect to the choice of the measure $\nu$. This remark is of importance, because we can obtain a  computationally tractable test statistic by choosing a discrete measure with a small number of support points.

\end{itemize}
\end{remark}

\section{Finite sample properties} \label{sec5}
 \def\theequation{5.\arabic{equation}}	
   \setcounter{equation}{0}

In this section we provide a small simulation study illustrating the finite sample properties of the tests proposed in Section \ref{sec3} and \ref{sec4}.
All results are based on $N = 1000$ simulation runs and the significance level for all tests is $\alpha = 0.05$.  The computation of the test statistic $W_n$ is achieved with help of the \texttt{R} package \texttt{Copula} (\cite{hofert2014package}). While for the theoretical investigations we originally defined the marginal distribution estimators $\hat{F}_0(y) = n^{-1}\sum_{i=1}^n \textbf{1}\{Y_i \leq y\},$ $\hat{F}_j(x_j) = n^{-1}\sum_{i=1}^n \textbf{1}\{X_i^{(j)} \leq x_j\}$ $(j = 1, \ldots, d)$, we now use the numerically more stable equivalents 
\begin{align*}
    \hat{F}_0(y) &:= \frac{1}{n+1} \sum_{i=1}^n \textbf{1}\{Y_i \leq y\}\,, & \hat{F}_j(x_j) &:= \frac{1}{n+1} \sum_{i=1}^n \textbf{1}\{X_i^{(j)} \leq x_j\}\,.
\end{align*}
For the kernel we used the Epanechnikov kernel in Subsection~\ref{sec5.1}, and a fifth-order version of the Gaussian kernel in Subsection~\ref{sec5.2}.

\subsection{Testing the classical null hypothesis} \label{sec5.1}

We verify the  finite sample properties of the  test  \eqref{testclass} for the classical hypothesis \eqref{m^2} for some of the common copula families, where  we replicate the data generating procedures in the simulation study of \cite{Noh2013}. More specifically, we generate samples from the Farlie-Gumbel-Morgenstern (FGM) copula, the Clayton copula, the Gaussian copula, and the $t$-copula with parameters listed in Table~\ref{tab:fgm}, where we treat all $t$-copulas with a fixed number of degrees of freedom as an individual one-parameter family. Thus we assume that there is no error in model \eqref{pd2} and specify the marginal distributions of the response $Y$ and the covariate $X$ as follows:
\begin{itemize}
    \item In case of the FGM copula, we take $Y \sim \NN(0,1)$ and we take $F(x) = 1 - \exp(-\exp(x))$ for the law of $X$. 
    \item In all other cases, we take $Y \sim \NN(1,1)$ and $X \sim \NN(0,1)$.
\end{itemize}
We consider different sample sizes $n \in \{100, 200, 500\}$ and varying parameter choices for the copula families. The bandwidth is chosen adaptively based on leave-one-out cross validation from the grid $\{0.2 + k/20 \mid k = 0, \ldots, 40\}$.
Table \ref{tab:fgm} documents the finite sample performance of the  test \eqref{testclass}  for the classical null hypothesis \eqref{m^2} when the model is well-specified ($M^2=0$). We observe that the nominal level is well approximated  for all  copula families under consideration. 

\begin{table}[h]
    \renewcommand{\arraystretch}{1.3}
    \centering
    \begin{tabular}{c|c|c|c|c}
        Copula & Parameters & $n=100$ & $n=200$ & $n=500$ \\
        \hline \hline
        FGM & $\vartheta = 0.5$ & 4.7 & 4.1 & 4.3 \\
        \hline
        Clayton & $\vartheta = -0.5$ & 4.8 & 3.6 & 5.8 \\
        \hline
        & $\vartheta = 0.1$ & 4.6 & 5.1 & 3.2 \\
        \hline
        & $\vartheta = 2$ & 4.5 & 5.8 & 4.1 \\
        \hline
        & $\vartheta = 8$ & 6.5 & 4.7 & 3.7 \\
        \hline
        Gaussian & $\rho = 0.6$ & 4.5 & 4.4 & 4.4 \\
        \hline
        & $\rho = -0.2$ & 3.4 & 4.1 & 3.6 \\
        \hline
        $t$ & $\rho = 0.6, df = 4$ & 4.7 & 3.9 & 4.6 \\
        \hline 
        & $\rho = 0.6, df = 40$ & 3.5 & 4.5 & 3.3
    \end{tabular}
    \caption{\it  Empirical rejection rates  of the test \eqref{testclass} under the null hypothesis $H_0: M^2=0$.}
    \label{tab:fgm}
\end{table}

To evaluate the finite simple performance of the test \eqref{testclass} under the  alternative, we consider the quadratic regression model
\begin{equation} \label{parabola}
    Y_i = \left(\frac{1}{2} - X_i\right)^2 + \varepsilon_i~~~~~~, ~~i=1 \ldots , n, 
\end{equation}
where $\varepsilon_1 , \ldots ,  \varepsilon_n$ are independent and $ \NN(0,1/100)$ distributed and $X_1, \ldots, X_n$ are independent $U(0,1)$ distributed.
The results are displayed in  Table \ref{tab:alt} and we observe that the test \eqref{testclass} 
reliably rejects the null hypothesis for all cases  under consideration. 

\begin{table}[]
    \renewcommand{\arraystretch}{1.3}
    \centering
    \begin{tabular}{c|c|c|c}
        copula & $n=100$ & $n=200$ & $n=500$ \\ \hline \hline
         Clayton & 56.2 & 89.4 & 100 \\ \hline
         Gaussian & 59.8 & 93.8 & 100 \\ \hline
         FGM & 57.9 & 93.5 & 100 \\ \hline
         $t$, $df=4$ & 58.6 & 94.3 & 100 \\ \hline
         $t$, $df=40$ & 57.7 & 92.9 & 100  
    \end{tabular}
    \caption{\it Empirical rejection rates of the test \eqref{testclass} under the alternative $H_0: M^2>0$, given the setting of \eqref{parabola}.}
    \label{tab:alt}
\end{table}

\subsection{Finite sample performance for distance estimation and relevant hypotheses} \label{sec5.2}

To employ the inference procedures introduced in Section~\ref{sec4}, we first determine the quantiles of the limiting random object $W$ in \eqref{hd2}. In the spirit of Remark~\ref{rem4.4}, we choose $\delta = 1/2$ and replace the Lebesgue measure in the statistic \eqref{hd2a} and in the denominator of the limiting distribution \eqref{hd2} by a uniform distribution supported at $10$ points $t = 0.5, 0.55, \ldots, 0.95$.
We then calculated the  $95$\%-quantile of the distribution of the random variable $W$  in \eqref{hd2} by $10^7$ simulation runs as  $q_{0.95} =  6.97836 
$. 

For all cases of interest, the measure of deviation $M^2$ as given in \eqref{hd3} has to be computed numerically, even in the single-covariate setup. Here the inner copula density $c_X$ reduces to $c_X \equiv 1,$ implying
\begin{align}
\label{hol11}
    M^2 = \int_{\R} (m(x) - m^*(x))^2 p(x)^2 \dx\,.
\end{align}
We consider the following single-covariate setups: 
\begin{itemize}
    \item[(S1)] We take $X \sim U(0, 6)$ and $Y = (4 - X)^2 + \varepsilon$, with $\varepsilon \sim \NN(0, 1/25)$, and fit this model to the family of Gaussian copulas.
    \item[(S2)] We take $X \sim \NN(0, 1)$ and $Y = (1/2 - X)^2 + \varepsilon$, with $\varepsilon \sim \NN(0, 1/100)$, and fit this model to the family of FGM copulas.
\end{itemize}

In scenario (S1), the true copula density $c$ between $X \sim U(0,6)$ and $Y = (4 - X)^2 + \varepsilon$ cannot be explicitly represented. Therefore, we determine the  parameter $\vartheta^*$ in \eqref{theta_star} corresponding to the best approximation by a Gaussian copula family as the pseudo-ML estimator $\hat{\vartheta}_{N_0}^{\mathrm{PL}}$ in \eqref{mpl} for a large initial sample reference sample of observations ($N_0 = 10^6$). The resulting estimate is $\vartheta^* = -0.6387,$ with a standard error less than $0.001$. Next, we calculate an approximation $\tilde F_0$ for the distribution function $F_0$ from  the reference sample. The ``best approximating'' regression is finally calculated by Monte Carlo integration, that is,  
\begin{align}
    m^*(x) = \int_0^1 F_0^{-1}(u) c_G(u, F(x), \vartheta^*) \du 
    &\approx \frac{1}{N_1} \sum_{i=1}^{N_1} \tilde {F}_0^{-1}(V_i) c(V_i, F(x), \vartheta^*)\,, \nonumber
\end{align}
where $c_G(\cdot, \rho)$ denotes the bivariate Gaussian copula density with correlation $\rho$,  and  $V_1,$ $V_2, \ldots, V_{N_1}$ are independent $U(0,1)$-distributed random variables ($N_1 = 5 \cdot 10^5$). 
Finally, we obtain the estimate of $M^2$ again by Monte Carlo integration, that  is 
\[
    M^2 \approx 6 \cdot \frac{1}{N_1} \sum_{i=1}^{N_1} \Bigl(m(U_i) - m^*(U_i)\Bigr)^2p(U_i)^2\,,
\]
where  $U_1, U_2, \ldots, U_{N_1} $ are independent random variables uniformly distributed on the interval $[0,6]$. The resulting value is given by  
$
    M^2 \approx 0.9144\,.
$
In Figure~\ref{fig_unif_samplesizes}, we display  the empirical rejection rates of the test \eqref{hol12}
for the hypotheses \eqref{hd4} for various values of $\boldsymbol{\Delta}$,
where the sample sizes are $n \in \{200, 500, 1000\}$.
To reduce the bias between $M^2$ and  $\mu_n$ as given in Theorem~\ref{thm4.1}, we employ a fifth-order version of the Gaussian kernel and for the bandwidth we use $h = 0.5 \cdot n^{-1/5}$, which turned out as a good rule of thumb. We observe that the results reflect the qualitative properties stated in Corollary  \ref{correjrate}. If $M^2 \approx \boldsymbol{\Delta}$, the rejection probability is close to the nominal level $\alpha$ and this approximation improves slightly with an increasing sample size. If $M^2 > \boldsymbol{\Delta},$ the rejection rates exceed the nominal level and increase with the sample size and the distance $M^2-\boldsymbol{\Delta}$. If $M^2 < \boldsymbol{\Delta}$, the rejection are very close to zero.

In order to investigate the sensitivity of the results with respect to the choice of the smoothing parameter, we display in Figure~\ref{fig_unif_bandwidths}  the empirical rejection rates of the test \eqref{hol12} for the hypotheses \eqref{hd4} for the bandwidths $h=c\cdot n^{-1/5}$ for various values of $c \in \{0.1, 0.5, 1\} $, where the sample size is given by $n=500$. We observe that  the test is rather robust with respect to different values of $c$.


\begin{figure}[h] 
\begin{center}
    \includegraphics[width=12cm, height=6cm]{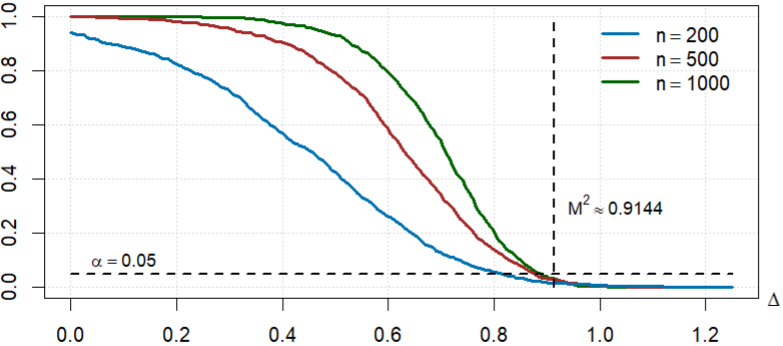}
\end{center}
\caption{\it Empirical rejection rates of the 
test \eqref{hol12}
for the hypotheses \eqref{hd4}  for various values of $\boldsymbol{\Delta}$ in scenario (S1).
The bandwidth is chosen as  $h = 0.5 \cdot n^{-1/5}$. The horizontal and vertical line display the nominal level  and the numerical approximation for the  deviation measure $M^2,$ respectively.} 
\label{fig_unif_samplesizes}
\end{figure}

\begin{figure}[h]
\begin{center}
    \includegraphics[width=12cm, height=6cm]{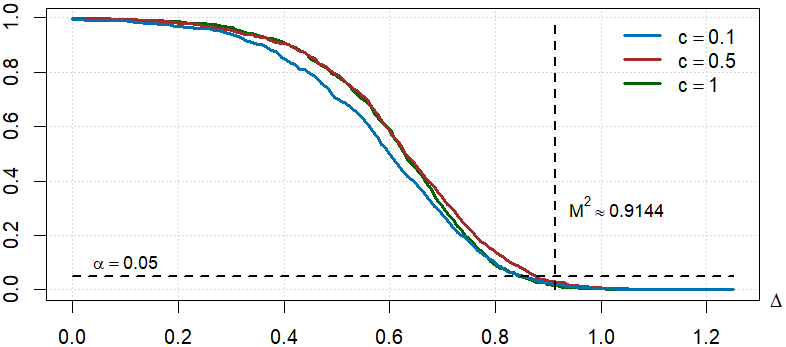}
\end{center}
\caption{
\it Empirical rejection rates of the 
test \eqref{hol12} for the hypotheses \eqref{hd4} for various values of the bandwidth $h = c \cdot n^{-1/5}$ 
in scenario (S1).
The sample size is $n=500$. The horizontal and vertical line display the nominal level and the numerical approximation for the deviation measure $M^2,$ respectively. }
\label{fig_unif_bandwidths}
\end{figure}

We continue considering the scenario (S2). When fitting a copula regression model with the FGM copula family, one often experiences the problem that the best-possible parameter $\vartheta^*$ is a boundary point of the parameter space $\Theta = [-1,1]$, 
as the FGM family only covers a small range of values of a dependence measure. For example, for Kendall's $\tau$, only values within the interval $[-2/9, 2/9]$ can be obtained if the FGM copula is used for modeling the dependency between the response and the covariate. 
If the dependency  between the response and the covariate is much stronger, it is therefore expected that $\vartheta^* = -1$ or $\vartheta^* = 1$. This is also the case for the parabolic model (S2), which means that assumption \ref{item2} is not satisfied.  Nevertheless, our procedure still proves to be robust and yields valid estimates for $M^2$.
According to equation (3) in \cite{Noh2013}, the FGM family admits a slightly less implicit representation of $m^*,$ namely,  
\[
    m^*(x) = \E(Y) + \vartheta(2F(x) - 1) \int F_0(y)(1 - F_0(y))\dy\,,
\]
and we determine an approximation $\tilde {F}_0$ for $F_0$ as described above, which yields 
$M^2 \approx 0.245$. The empirical rejection probabilities of the  test \eqref{hol12} for the hypotheses \eqref{hd4} are displayed in 
Figure~\ref{fig_fgm_samplesizes}, where the bandwidth is chosen again as $h = 0.5 \cdot n^{-1/5})$. We observe similar properties as in scenario (S1). 

\begin{figure}[h]
\begin{center}
    \includegraphics[width=12cm, height=6cm]{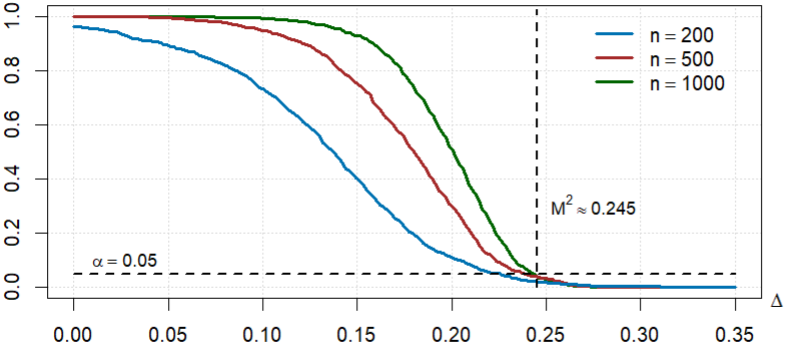}
\end{center}
\caption{
\it Empirical rejection rates of the 
test \eqref{hol12}
for the hypotheses \eqref{hd4}  for various values of $\boldsymbol{\Delta}$ in scenario (S2). The bandwidth is chosen as  $h = 0.5 \cdot n^{-1/5}$. The horizontal and vertical line display the nominal level  and the numerical approximation for the  deviation measure $M^2,$ respectively. \label{fig_fgm_samplesizes}}
\end{figure}

We conclude this section considering an example with a multivariate predictor: 
\begin{itemize}
    \item[(S3)] We take $X = \big (X^{(1)}, X^{(2)}, X^{(3)}\big )$ as a 3-dimensional vector from a  Gaussian distribution with mean zero and covariance matrix 
    \[
        \Sigma = \begin{pmatrix} 1 & 0.5 & 0.5 \\ 0.5 & 1 & 0.5 \\ 0.5 & 0.5 & 1 \end{pmatrix},
    \]
    and define  $Y$ by the equation $Y = m(X) + \varepsilon$, where  $ \varepsilon \sim \NN(0, 1/100)$ and for $x = (x_1, x_2, x_3) \in \R^3$: 
    \[
        m(x) = (1 + x_1 - x_2 + x_3)^2\,.
    \]
We consider the  Frank copula family for the copulas regression approach.
\end{itemize}
In setups involving covariates of dimension $d\geq 2$,  the inner copula density $c_X$ of the covariates is not constant and  influences the test statistic and the measure of deviation as well. This complicates the numerical calculation of $M^2$ substantially. 
For the scenario (S3) we obtain $\vartheta^* \approx 2.3438$ for the parameter corresponding to the best approximation 
with corresponding distance $M^2\approx 0.474$
when using the Frank copula family. 
The empirical rejection probabilities of the 
test \eqref{hol12}
for the hypotheses \eqref{hd4}  
are displayed in Figure~\ref{fig_frank_500}  for the sample sizes $n \in  \{200, 500, 1000\} $, where  the bandwidth was chosen as $h = 1 \cdot n^{-1/7}$. We observe the  same qualitative properties  as in the previous examples (see also Corollary \ref{correjrate}), where the approximation of the nominal level $\alpha = 0.05$ at the ``boundary''  $M^2 \approx \boldsymbol{\Delta}$ is improving with an increasing sample size. Finally, we consider the sensitivity of the test with respect to choice of the smoothing parameter and  display in Figure \ref{fig_frank_1000} empirical rejection probabilities of the 
test \eqref{hol12}
for the hypotheses \eqref{hd4}, where the bandwidth is chosen as $h =  c \cdot n^{-1/7}$ with  $c \in \{ 0.5,1,2\} $ and  sample size  $n=1000$. While the differences between the choices $c=1$ and $c=2$ are relatively moderate, the results for the choice $c=0.5$ indicate that the test is more sensitive with respect to the choice of the bandwidth if the dimension $d$ of the predictor is larger than $2$. In general, we strongly recommend to apply the bias-variance tradeoff rule-of-thumb $h = 1 \cdot n^{-1/(4+d)}$.

\begin{figure}[h]
\begin{center}
        \includegraphics[width=12cm, height=6cm]
        {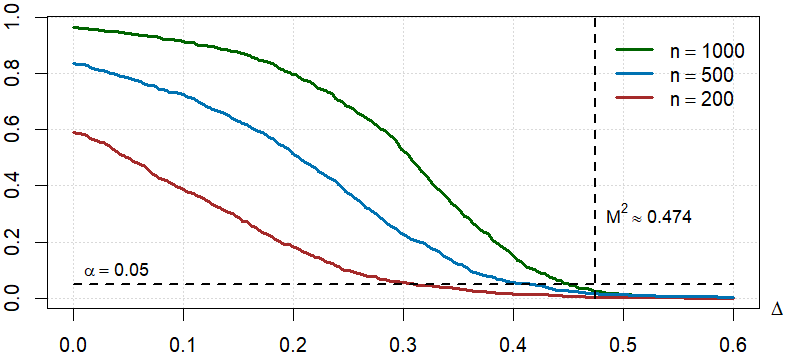}
\end{center}
\caption{\it Empirical rejection rates of the 
test \eqref{hol12}
for the hypotheses \eqref{hd4}  for various 
values of $\boldsymbol{\Delta}$ in scenario (S3).
The bandwidth is chosen as  $h = 1 \cdot n^{-1/7}$.  The horizontal and vertical line display the nominal level  and the numerical approximation for the  deviation measure $M^2$, respectively.} \label{fig_frank_500}
\end{figure}

\newpage

\begin{figure}[h]
\begin{center}
       \includegraphics[width=12cm, height=6cm]{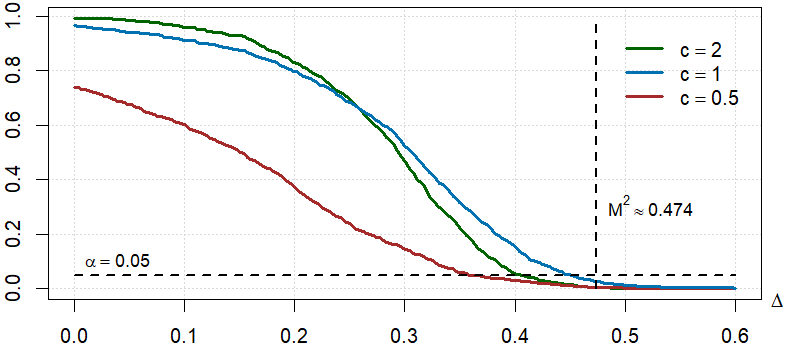}
\end{center}
\caption{
\it Empirical rejection rates of the 
test \eqref{hol12} for the hypotheses \eqref{hd4}  for various values of the bandwidth $h = c \cdot n^{-1/7}$ in scenario (S3). The  sample size is $n = 1000$. The horizontal and vertical line display the nominal level and the numerical approximation for the deviation measure $M^2,$ respectively.} \label{fig_frank_1000}
\end{figure}

\section{Conclusion and outlook} \label{sec6}

We have proposed inference procedures for assessing the adequacy of semiparametric
copula regression models. The central idea is to evaluate misspecification not at the
level of the copula itself, but at the level of the regression function induced by the
postulated copula family. For this purpose, we introduced a weighted $L^2$-distance
between the true regression function and its best approximation within the copula
regression model. A kernel-based estimator of this distance was shown to be consistent
and asymptotically normal both under correct specification and under fixed alternatives.
These results yield a classical goodness-of-fit test for exact specification.
\\
More importantly, the proposed framework also allows us to address relevant
misspecification. This is particularly useful in applications, where the parametric
copula family is typically not expected to describe the full dependence structure exactly,
but is used as a parsimonious device for approximating the regression function. To avoid
the difficult estimation of the asymptotic variance under alternatives, we developed a
self-normalized sequential statistic. This leads to pivotal confidence intervals for the
deviation measure and to tests for relevant hypotheses of the form that the model
deviation is below or above a prespecified tolerance level. The simulation results indicate
that these procedures have satisfactory finite-sample properties and, in particular,
provide a reasonable approximation of the nominal level at the boundary of the relevant
hypotheses.
\\
The methodology developed here suggests several directions for future research. Since
the statistic is based on kernel smoothing, its finite-sample performance can be sensitive
to the bandwidth choice, especially when the dimension of the covariate vector increases.
It would therefore be interesting to develop related procedures which do require the specification of smoothing parameters. Another promising direction is the extension to weakly dependent observations,
multi-response regression models, and regression functionals beyond the conditional mean.
Finally, the proposed approach could be adapted to variable-selection problems in copula
regression, where the goal is to identify those covariates that contribute significantly to
the induced regression function. \par 
\medskip 
\begin{appendices}
\section{Proofs of Main Theorems} \label{appendix_a}
 \def\theequation{A.\arabic{equation}}	
   \setcounter{equation}{0}

In this appendix section we give the proofs of the main theorems~\ref{thm2.1},~\ref{thm3.1},~\ref{thm4.1} and~\ref{thm4.6}. Recall the representations of $m(x)$ and $\hat{m}(x)$ as given in \eqref{det1a}, \eqref{det1b}. For the numerators, we introduce the shorthand notations
\begin{subequations}
\begin{align}
    \boldsymbol{e}(\F(x)) & := \E\bigl(Yc(F_0(Y), \F(x), \vartheta_0)\bigr) \,, \label{e(x)} \\   \hat{\boldsymbol{e}}(\hat{\F}(x)) &:= \frac{1}{n} \sum_{k=1}^n Y_kc(\hat{\F}_0(Y_k),\hat{\F}(x),\hat{\vartheta}_n) \,, \label{e(x)hat}
\end{align}
\end{subequations}
so that
\begin{align*}
    m(x) & = \frac{\boldsymbol{e}(\F(x))}{c_X(\F(x))}\,, & \hat{m}(x) &= \frac{\hat{\boldsymbol{e}}(\hat{\F}(x))}{\hat{c}_X(\hat{\F}(x))}\,.
\end{align*}

Moreover, in all of what follows, we use the abbreviation $K_{ij} := K((X_i - X_j)/h)$ for any two indices $i,j \in \{1, \ldots, n\}$, $i \neq j$.

\subsection*{Proof of Theorem~\ref{thm2.1}}

We decompose the test statistic $W_n$ as follows:
\begin{align}
    W_n &= \frac{1}{n(n-1)}\sum_{i\not= j}\frac{K_{ij}}{h^d}e_ie_j\hat{c}_X(\hat{U}_i)\hat{c}_X(\hat{U}_j) \nonumber \\
    &= \frac{1}{n(n-1)}\sum_{i\not= j}\frac{K_{ij}}{h^d}\bigl(\Tilde{\varepsilon}_i + (m^*(X_i) - \hat{m}(X_i)\bigr)\bigl(\Tilde{\varepsilon}_j + m^*(X_j) - \hat{m}(X_j)\bigr)\hat{c}_X(\hat{U}_i)\hat{c}_X(\hat{U}_j) \nonumber \\
    &= W_{n1} + 2W_{n2} + W_{n3}\,, \label{neu1}
\end{align}
with $\Tilde{\varepsilon}_i := Y_i - m^*(X_i)$ and
\begin{align*} 
    W_{n1} &:= \frac{1}{n(n-1)}\sum_{i\not= j}\frac{K_{ij}}{h^d}\Tilde{\varepsilon}_i\Tilde{\varepsilon}_j\hat{c}_X(\hat{U}_i)\hat{c}_X(\hat{U}_j)\,, 
    \\
    W_{n2} &:= \frac{1}{n(n-1)}\sum_{i\not= j}\frac{K_{ij}}{h^d}\Tilde{\varepsilon}_i\bigl(m^*(X_j) - \hat{m}(X_j)\bigr)\hat{c}_X(\hat{U}_i)\hat{c}_X(\hat{U}_j)\,,\nonumber \\
    W_{n3} &:= \frac{1}{n(n-1)}\sum_{i\not= j}\frac{K_{ij}}{h^d}\bigl(m^*(X_i) - \hat{m}(X_i)\bigr)\bigl(m^*(X_j) - \hat{m}(X_j)\bigr)\hat{c}_X(\hat{U}_i)\hat{c}_X(\hat{U}_j)\,.\nonumber 
\end{align*}
The first term $W_{n1}$ will yield the stated limit in probability, while the other two terms will turn out to be asymptotically negligible. 
For this purpose, note that \cite[Theorems 2 and 3]{Noh2013} show $m^*(x) - \hat{m}(x) = O_{\PP}(n^{-1/2})$ pointwise, but here we require stronger uniform estimates. 

\begin{lemma} \label{lemmauniform}
Define $\boldsymbol{e}^*$ by replacing $\vartheta_0$ with $\vartheta^*$ in \eqref{e(x)}$,$ and consider the estimator $\hat{\boldsymbol{e}}$ of $\boldsymbol{e}^*$ as given in \eqref{e(x)hat}$,$ and the inner copula density $c_X$ and its estimator $\hat{c}_X$. It holds that
\begin{align*}
    \|\boldsymbol{e}^* - \hat{\boldsymbol{e}}\|_\infty &= O_{\PP}\Bigl(n^{-1/2}\Bigr)\,, & 
    \|\hat{c}_X - c_X\|_\infty &= O_{\PP}\Bigl(n^{-1/2}\Bigr)\,, 
\end{align*} 
where $\boldsymbol{e}^* - \hat{\boldsymbol{e}}$ and $\hat{c}_X - c_X$ are abbreviated forms for $\boldsymbol{e}^*\bigl(\F(\cdot)\bigr) - \hat{\boldsymbol{e}}\bigl(\hat{\F}(\cdot)\bigr)$ and $c_X\bigl(\hat{\F}(\cdot), \hat{\vartheta}_n\bigr) - c_X\bigl(\F(\cdot), \vartheta_0\bigr),$ respectively.
\end{lemma}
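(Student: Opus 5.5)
We decompose each difference into a parameter part, a margin part inside the copula, and (for $\hat{\boldsymbol e}$) an empirical-measure part, and bound each uniformly in $x$. For $c_X$ we write
\[
c_X\bigl(\hat{\F}(x),\hat\vartheta_n\bigr)-c_X\bigl(\F(x),\vartheta^*\bigr)
=\bigl[c_X(\hat{\F}(x),\hat\vartheta_n)-c_X(\hat{\F}(x),\vartheta^*)\bigr]
+\bigl[c_X(\hat{\F}(x),\vartheta^*)-c_X(\F(x),\vartheta^*)\bigr].
\]
The first bracket is handled by the mean value theorem in $\vartheta$: it is bounded by $\sup_{\mathbf u,\vartheta}|\partial_\vartheta c_X(\mathbf u,\vartheta)|\,|\hat\vartheta_n-\vartheta^*|$. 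By \eqref{item6}, $|\hat\vartheta_n-\vartheta^*|=O_{\PP}(n^{-1/2})$. The second bracket is handled by the mean value theorem in $\mathbf u$: it is bounded by $\sup|\partial_{\mathbf u}c_X|\cdot\max_j\|\hat F_j-F_j\|_\infty$, and each $\|\hat F_j-F_j\|_\infty=O_{\PP}(n^{-1/2})$ by the Dvoretzky--Kiefer--Wolfowitz inequality. Since $d$ is fixed, a union over $j$ costs nothing.

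For $\hat{\boldsymbol e}-\boldsymbol e^*$ we write $\mathbf u=\F(x)$, $\hat{\mathbf u}=\hat{\F}(x)$ and split
\begin{align*}
\hat{\boldsymbol e}(\hat{\mathbf u})-\boldsymbol e^*(\mathbf u)
&=\frac1n\sum_{k}Y_k\bigl[c(\hat F_0(Y_k),\hat{\mathbf u},\hat\vartheta_n)-c(F_0(Y_k),\mathbf u,\vartheta^*)\bigr]\\
&\quad+\Bigl(\frac1n\sum_k Y_kc(F_0(Y_k),\mathbf u,\vartheta^*)-\E\bigl(Yc(F_0(Y),\mathbf u,\vartheta^*)\bigr)\Bigr).
\end{align*}
For the first sum we use a first-order Taylor expansion in $(u_0,\mathbf u,\vartheta)$ with remainder controlled by the second derivatives. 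The linear terms are bounded by
\[
\Bigl(\frac1n\sum_k|Y_k|\sup_{\vartheta,\mathbf v}|\partial_z c(\cdot,\mathbf v,\vartheta)|\Bigr)\cdot
\Bigl(\|\hat F_0-F_0\|_\infty+\max_j\|\hat F_j-F_j\|_\infty+|\hat\vartheta_n-\vartheta^*|\Bigr).
\]
The first factor is $O_{\PP}(1)$ by the law of large numbers and the moment bounds in \ref{item3}, and the second factor is $O_{\PP}(n^{-1/2})$. The quadratic remainder is $O_{\PP}(n^{-1})$ by the same argument, using the second-derivative moment condition in \ref{item3}.

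The second term is the main obstacle: it is an empirical process indexed by $\mathbf u\in[0,1]^d$ and has to be controlled uniformly at the rate $n^{-1/2}$. The class $\{(y,\mathbf v)\mapsto y\,c(F_0(y),\mathbf u,\vartheta^*):\mathbf u\in[0,1]^d\}$ is Lipschitz in the finite-dimensional index $\mathbf u$, with Lipschitz constant $|y|\sup_{\mathbf u}|\partial_{\mathbf u}c|$, which is square integrable by \ref{item3}. It therefore has finite $L^2$-bracketing entropy (van der Vaart, Theorem 2.7.11), so it is Donsker. This gives $\sup_{\mathbf u}|\mathbb G_n f_{\mathbf u}|=O_{\PP}(1)$, i.e.\ the second term is $O_{\PP}(n^{-1/2})$ uniformly.

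A subtle point arises in the Taylor step: the suprema over $\mathbf u$ must sit inside the expectations. The conditions in \ref{item3} only give square-integrable envelopes for suprema over $\vartheta$, with the covariate argument fixed at the data point. If the derivatives are not uniformly bounded in $\mathbf u$, we would either read the envelope conditions as uniform in $\mathbf u$ (which is the natural interpretation of ``bounded by a measurable function'') or restrict $x$ to a region where $\F(x)$ stays away from the boundary of $[0,1]^d$. Otherwise the argument is routine. Finally, under the null hypothesis $\vartheta^*=\vartheta_0$, so the statement as written, with $\vartheta_0$ in $c_X$, follows as a special case.
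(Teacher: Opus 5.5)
Your proposal is correct (under the strengthened reading of the envelopes that you propose) and follows the paper's proof essentially step for step. The paper also splits $\hat{\boldsymbol e}-\boldsymbol e^*$ into two pieces. The first is the empirical-process term $\overline{\boldsymbol e}-\boldsymbol e^*$ with $\overline{\boldsymbol e}(x)=n^{-1}\sum_k Y_k c(F_0(Y_k),\F(x),\vartheta^*)$, handled by a Lipschitz-in-$\mathbf u$ Donsker argument (van der Vaart's Example 19.7, the same fact as the bracketing bound you cite). The second is a Taylor-expanded term controlled by $\|\hat F_0-F_0\|_\infty$, $\|\hat{\F}-\F\|_\infty$ and $\hat\vartheta_n-\vartheta^*=O_{\PP}(n^{-1/2})$, and the paper treats $\hat c_X-c_X$ by a direct Taylor expansion.

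The uniformity caveat you raise is genuine, and the paper does not resolve it either: it simply posits a global Lipschitz constant $L$ in $\mathbf u$ rather than deriving it from \ref{item3}. The same issue also affects the $u_0$-argument at the intermediate points of the Taylor remainder; for instance $\hat F_0(Y_{(n)})=1$ lies on the boundary. Your strengthened reading should therefore make the envelopes uniform in all copula arguments, not just in $\mathbf u$.
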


The first term $W_{n1}$ will yield the stated limit distribution, while the other two terms will turn out to be asymptotically negligible, i.e., we prove
\begin{subequations}
\begin{align}
    \label{pd4} W_{n1} &\overset{\PP}{\longrightarrow} M^2\,, \\
    \label{det2}
    W_{n2} &= O_{\PP}(1/n)\,, \\
    \label{det3} W_{n3} &= O_{\PP}(1/n)\,. 
\end{align}
\end{subequations}

{\it Proof of \eqref{pd4}.} In a first step, we replace the estimators $\hat{c}_X(\hat{U}_i), \hat{c}_X(\hat{U}_j)$ with the true copula densities $c_X^*(U_i), c_X^*(U_j)$ evaluated in the true marginal transforms. The resulting error is negligible due to the following.
 
\begin{lemma}
\label{lemmareplace1a}
For 
\[
    \widetilde{W}_{n1} := \frac{1}{n(n-1)} \sum_{i\not=j} \frac{K_{ij}}{h^d} \Tilde{\varepsilon}_i\Tilde{\varepsilon}_j c_X^*(U_i)c_X^*(U_j)\,,
\]
it holds that $\widetilde{W}_{n1} - W_{n1} = O_{\PP}(1/n)$. 
\end{lemma}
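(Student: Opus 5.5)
We write $\hat a_i := \hat c_X(\hat U_i)$ and $a_i := c_X^*(U_i)$, and decompose
\[
\hat a_i\hat a_j - a_ia_j = (\hat a_i - a_i)a_j + a_i(\hat a_j - a_j) + (\hat a_i - a_i)(\hat a_j - a_j).
\]
Then
\[
\widetilde W_{n1} - W_{n1} = -\bigl(R_{n1} + R_{n1}' + R_{n2}\bigr),
\]
where $R_{n1}$ contains the factor $(\hat a_i - a_i)a_j$ and $R_{n1}'$ is its mirror image; by symmetry of $K$ it suffices to treat $R_{n1}$. The key tool is a linearization of $\hat a_i - a_i$. By a Taylor expansion of $c_X$ in $(\mathbf u,\vartheta)$ (twice continuously differentiable by \ref{item3}, \ref{item4}),
\[
\hat a_i - a_i = \partial_{\vartheta}c_X(U_i,\vartheta^*)^\top(\hat\vartheta_n-\vartheta^*) + \partial_{\mathbf u}c_X(U_i,\vartheta^*)^\top(\hat U_i - U_i) + r_i ,
\]
with $\max_i|r_i| = O_{\PP}(1/n)$, using $\|\hat{\F}-\F\|_\infty = O_{\PP}(n^{-1/2})$ (DKW) and $\hat\vartheta_n-\vartheta^* = O_{\PP}(n^{-1/2})$ from \eqref{item6}. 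The uniform bound of Lemma~\ref{lemmauniform} gives $\max_i|\hat a_i-a_i| = O_{\PP}(n^{-1/2})$.

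\emph{Quadratic term.} For $R_{n2}$ we bound $|R_{n2}| \le \max_i|\hat a_i - a_i|^2\cdot \frac{1}{n(n-1)}\sum_{i\ne j}h^{-d}|K_{ij}||\tilde\varepsilon_i||\tilde\varepsilon_j|$. The second factor has bounded expectation, of order $\E\bigl(\tilde\varepsilon^2 p(X)\bigr)$, by \ref{item1}, \ref{item5} and a change of variables. Hence $R_{n2} = O_{\PP}(1/n)$.

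\emph{Parameter part of $R_{n1}$.} This part equals $(\hat\vartheta_n-\vartheta^*)^\top V_n$, where
\[
V_n = \frac{1}{n(n-1)}\sum_{i\ne j}h^{-d}K_{ij}\tilde\varepsilon_i\tilde\varepsilon_j\,\partial_\vartheta c_X(U_i,\vartheta^*)\,a_j
\]
is a U-statistic. Its mean is $\E\bigl(\Delta(X)^2\,\partial_\vartheta c_X\, c_X^*\,p(X)\bigr)+o(1)$, because $\E(\tilde\varepsilon\mid X)=\Delta(X)$. Under the null ($\Delta\equiv0$) the kernel is degenerate and $V_n = O_{\PP}\bigl((nh^{d/2})^{-1}\bigr)$, which gives $O_{\PP}(1/n)$ whenever $nh^d\to\infty$. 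In general, however, $V_n = O_{\PP}(1)$, so this part is only $O_{\PP}(n^{-1/2})$. The claimed rate $O_{\PP}(1/n)$ is therefore natural in the null setting, and I would prove it there (which is what Theorem~\ref{thm3.1} needs). Under alternatives I would record the $O_{\PP}(n^{-1/2})$ bound, which suffices for consistency in \eqref{pd4}, and carry the linear term into the expansion of Theorem~\ref{thm4.1}; this is the origin of the $\eta_k$ contribution in $\M_k$.

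\emph{Empirical-process part of $R_{n1}$.} Writing
\[
\hat U_i^{(l)} - U_i^{(l)} = \frac1n\sum_k\bigl(\mathbf 1\{X_k^{(l)}\le X_i^{(l)}\} - F_l(X_i^{(l)})\bigr),
\]
this part becomes a third-order V-statistic in $(i,j,k)$. Under $\Delta\equiv0$ it is degenerate in both $i$ and $j$ through $\tilde\varepsilon_i\tilde\varepsilon_j$. The only nonvanishing second moments arise from index coincidences such as $k\in\{i,j\}$, and a Hoeffding decomposition together with a variance computation of order $n^{-2}\bigl(1+(nh^d)^{-1}\bigr)$ then yields $O_{\PP}(1/n)$. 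The moment conditions in \ref{item3}, \ref{item4} control all the mixed terms.

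\emph{Main obstacle.} The main difficulty is this three-index degenerate V-statistic, where the careful bookkeeping of diagonal terms is needed. I would handle it by Hoeffding projection, showing that every projection either vanishes or carries an extra factor $n^{-1}$.
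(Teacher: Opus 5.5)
Your diagnosis is correct, and it applies to the paper's own proof as well. The paper expands $g_X(\mathbf u_i,\mathbf u_j,\vartheta)=c_X(\mathbf u_i,\vartheta)c_X(\mathbf u_j,\vartheta)$ to second order. It bounds the Hessian terms by two $O_{\PP}(n^{-1/2})$ factors times a kernel average that is $O_{\PP}(1)$ by the substitution argument. For the first-order terms it takes the exemplary U-statistic $\mathcal U_n$ with kernel $h^{-d}K_{ij}\tilde\varepsilon_i\tilde\varepsilon_j c_X(U_j,\vartheta^*)\partial_{\mathbf u}c_X(U_i,\vartheta^*)$, calls this kernel mean-zero, and gets $\mathcal U_n=O_{\PP}(n^{-1/2})$ from Powell--Stock--Stoker and the H\'ajek projection. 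The mean-zero claim requires $\E(\tilde\varepsilon_i\tilde\varepsilon_j\mid X_i,X_j)=\Delta(X_i)\Delta(X_j)=0$, which is exactly the null. Under a fixed alternative with $d\ge 2$, the parameter part contributes $2(\hat\vartheta_n-\vartheta^*)^\top\E\bigl(\Delta(X)^2c_X^*(\F(X))\,\partial_\vartheta c_X(\F(X),\vartheta^*)\,p(X)\bigr)$. Together with the analogous $\hat{\F}-\F$ term, this is generically of exact order $n^{-1/2}$. (For $d=1$ the lemma is trivial, because $c_X\equiv 1$.) So the paper also only establishes $O_{\PP}(1/n)$ in the setting you restrict to, and $O_{\PP}(n^{-1/2})$ is all that Theorem 2.1 needs.

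Where your route differs from the paper's, yours is the more complete one. The paper's device of bounding $\mathcal U_n$ and then multiplying by an $O_{\PP}(n^{-1/2})$ factor only works when the estimation error is a common factor, as for $\hat\vartheta_n-\vartheta^*$. For $\hat U_i-U_i$, which depends on $i$, pulling out $\|\hat{\F}-\F\|_\infty$ destroys the cancellation and yields only $O_{\PP}(n^{-1/2})$, which is not enough for $nh^{d/2}$-scaling in Theorem 3.1. Your three-index V-statistic bookkeeping supplies what is missing there. With $\E(\tilde\varepsilon\mid X)=0$, the part with $k\notin\{i,j\}$ has second moment of order $n^{-3}h^{-d}$, and the parts with $k\in\{i,j\}$ are $n^{-1}$ times degenerate U-statistics. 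That piece is therefore actually $o_{\PP}(1/n)$; your bound $n^{-2}(1+(nh^d)^{-1})$ is valid but loose. Likewise, treating the parameter part as $(\hat\vartheta_n-\vartheta^*)^\top V_n$ with $V_n$ degenerate gives $o_{\PP}(1/n)$, which is sharper than the paper's bound. Your argument only uses $\E(\tilde\varepsilon_i\mid X_i)=0$. It therefore covers verbatim the $\varepsilon_i\varepsilon_j$-version of $W_{n1}$ used in the proof of Theorem 4.1, even under alternatives.

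One correction to your side remark. The linear term that survives under alternatives is not the origin of the $\eta_k$ contribution in $\M_k$. That contribution comes from the expansion of $\hat m-m^*$, through $\partial_\vartheta m^*$. The weight-replacement term involves $\Delta\,\partial_\vartheta c_X$ and $\Delta\,\partial_{\mathbf u}c_X$ and is not contained in $\M_k$. In the decomposition of Theorem 4.1 it arises when $\hat c_X(\hat U_i)\hat c_X(\hat U_j)$ is replaced by $c_X^*(U_i)c_X^*(U_j)$ in $W_{n3}^{(3)}$. There the paper claims an $O_{\PP}(1/n)$ error by the same mean-zero reasoning, so it would have to be added to the linear representation rather than identified with an existing term.
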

Now, $\widetilde{W}_{n1}$ is a $U$-statistic with the symmetric kernel
\[
    H_n(Z_i, Z_j) = \frac{K_{ij}}{h^d} \Tilde{\varepsilon}_i\Tilde{\varepsilon}_j c_X^*(U_i)c_X^*(U_j)\,,
\]
where $Z_i := (X_i, Y_i)$. If $M^2 = 0,$ then we have $\Tilde{\varepsilon}_i\Tilde{\varepsilon}_j = \varepsilon_i\varepsilon_j$ and the kernel $H_n$ is degenerate, giving $\widetilde{W}_{n1} = o_{\PP}(1)$ (see also the proof of Theorem~\ref{thm3.1}). Otherwise, we compute the mean of $H_n(Z_i, Z_j)$ as follows: 
\begin{align*}
    \E(H_n(Z_i, Z_j)) &= \E(\E(H_n(Z_i, Z_j) \mid Z_i, Z_j)) \\
    &= \frac{1}{h^d} \E\Bigl(\E\bigl(K_{ij}(Y_i - m^*(X_i))(Y_j - m^*(X_j))c_X^*(U_i)c_X^*(U_j) \mid Z_i, Z_j\bigr)\Bigr) \\
    &= \frac{1}{h^d} \E\Bigl(K_{ij}(m(X_i) - m^*(X_i))(m(X_j) - m^*(X_j))c_X^*(U_i)c_X^*(U_j)\Bigr) \\
    &= \frac{1}{h^d} \iint K\left(\frac{x_i - x_j}{h}\right) (m(x_i) - m^*(x_i))(m(x_j) - m^*(x_j)) \\
    & \hspace{4cm} \times c_X^*(\F(x_i))c_X^*(\F(x_j))p(x_i)p(x_j)\dx_i\dx_j\,.
\end{align*}
By a simple substitution argument, and by use of $\int K(u)\du = 1$, one now obtains 
\begin{align*}
    \E(H_n(Z_i, Z_j)) &= \frac{1}{h^d} \iint K(u)(m(v - uh) - m^*(v - uh))(m(v) - m^*(v)) \\
    & \hspace{3cm} \times c_X^*(\F(v - uh))c_X^*(v)p(v - uh)p(v)h^d \du\dv \\
    &= \int (m(v) - m^*(v))^2 c_X^*(\F(v - uh)^2p(v)^2\dv + o_{\PP}(1) \\
    &= M^2 + o_{\PP}(1)\,.
\end{align*}
This substitution argument can also be used to verify the condition of \cite[Theorem 3.1]{powell1989semiparametric}, namely, $\E(\|H_n(Z_i, Z_j)\|^2) = O(h^{-d}) = o(n)$. This theorem gives $\widetilde{W}_{n1} = \E(\widetilde{W}_{n1}) + o_{\PP}(1) = M^2 + o_{\PP}(1)$. By Lemma~\ref{lemmareplace1a}, the same applies for $W_{n1}$. 
\medskip

{\it Proof of \eqref{det2}.} 
Using a shorthand notation omitting the arguments, we can write 
\begin{align}
    m^* - \hat{m} &= \frac{\boldsymbol{e}^*}{c_X^*} - \frac{\hat{\boldsymbol{e}}}{\hat{c}_X} = \frac{\boldsymbol{e}^*\hat{c}_X - \hat{\boldsymbol{e}}c_X^*}{c_X^*\hat{c}_X} = \frac{(\boldsymbol{e}^* - \hat{\boldsymbol{e}})c_X^* + \boldsymbol{e}^*(\hat{c}_X - c_X^*)}{c_X^*\hat{c}_X} \nonumber \\
    &= \frac{\boldsymbol{e}^* - \hat{\boldsymbol{e}}}{\hat{c}_X} + \frac{m^*(\hat{c}_X - c_X^*)}{\hat{c}_X}\,. \label{pd5}
\end{align}
Therefore, we rewrite $W_{n2}$ as
\begin{align*}
    W_{n2} &= \frac{1}{n(n-1)}\sum_{i\not=j} \frac{K_{ij}}{h^d}\Tilde{\varepsilon}_i \hat{c}_X(\hat{U}_i)\Bigl(\boldsymbol{e}^*(U_j) - \hat{\boldsymbol{e}}(\hat{U}_j)\Bigr)\, \\
    &+ \frac{1}{n(n-1)}\sum_{i\not=j} \frac{K_{ij}}{h^d}\Tilde{\varepsilon}_i \hat{c}_X(\hat{U}_i)m^*(X_j)\Bigl(\hat{c}_X(\hat{U}_j) - c_X^*(U_j)\Bigr)\,.
\end{align*}
We can replace $W_{n2}$ by 
\begin{align*}
    \widetilde{W}_{n2} 
    = E_{n1} + E_{n2}\,,
\end{align*}
only leaving an error of $O_{\PP}(1/n)$ by the arguments in the proof of Lemma~\ref{lemmauniform}, where
\begin{align*}
   E_{n1} &:= \frac{1}{n(n-1)}\sum_{i\not= j}\frac{K_{ij}}{h^d}\Tilde{\varepsilon}_i c_X^*(U_i)\Bigl(\boldsymbol{e}^*(U_j) - \hat{\boldsymbol{e}}(\hat{U}_j)\Bigr)\,, \\
   E_{n2} &:= \frac{1}{n(n-1)}\sum_{i\not=j} \frac{K_{ij}}{h^d}\Tilde{\varepsilon}_ic_X^*(U_i)m^*(X_j)\Bigl(\hat{c}_X(\hat{U}_j) - c_X^*(U_j)\Bigr)\,.
\end{align*}
The main work is to show that $E_{n1} = O_{\PP}(n^{-1})$, which is done in the following lemma. The corresponding statement $E_{n2} = O_{\PP}(n^{-1})$ is obtained from analogous arguments using the regularity assumption \ref{item1}.

\begin{lemma} \label{lemmaA1}
    We have $E_{n1} = O_{\PP}(1/n)$.
\end{lemma}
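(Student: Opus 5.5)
The plan is to linearize $\boldsymbol{e}^*(U_j) - \hat{\boldsymbol{e}}(\hat{U}_j)$ uniformly in $j$, so that $E_{n1}$ becomes, up to a negligible remainder, a third-order $U$-statistic. Its Hoeffding decomposition should then give the rate $O_{\PP}(1/n)$.

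\emph{Step 1 (linearization).} Write $\hat{\boldsymbol{e}}(\hat{\F}(x)) - \boldsymbol{e}^*(\F(x))$ as a sum of three pieces: the purely empirical part $\frac1n\sum_k \{Y_k c(F_0(Y_k),\F(x),\vartheta^*) - \boldsymbol{e}^*(\F(x))\}$, the effect of $\hat F_0$ and $\hat{\F}$, and the effect of $\hat{\vartheta}_n$. Taylor-expand the copula density in $(u_0,\mathbf{u},\vartheta)$ and use the expansion \eqref{item6} of $\hat{\vartheta}_n - \vartheta^*$. This gives
\[
    \hat{\boldsymbol{e}}(\hat{\F}(x)) - \boldsymbol{e}^*(\F(x)) = \frac1n\sum_{k=1}^n \psi(Z_k, x) + R_n(x),
\]
where $\psi(Z_k,x)$ is centered for every fixed $x$ (it is essentially the influence function appearing in $\M_k(x)$ in \eqref{mkx}, together with the empirical term). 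The remainder satisfies $\|R_n\|_\infty = O_{\PP}(n^{-1})$, obtained from the second-order terms controlled by \ref{item3}, \ref{item4} and the uniform rates of Lemma~\ref{lemmauniform}. The $o_{\PP}(n^{-1/2})$ term in \eqref{item6} must be sharpened to $O_{\PP}(n^{-1})$ through a second-order expansion of the pseudo-likelihood score. Since $\frac{1}{n(n-1)}\sum_{i\neq j} h^{-d}|K_{ij}\Tilde{\varepsilon}_i c_X^*(U_i)| = O_{\PP}(1)$ by the substitution argument used for $\widetilde{W}_{n1}$, the remainder contributes $O_{\PP}(1/n)$ to $E_{n1}$.

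\emph{Step 2 (U-statistic).} The main part of $E_{n1}$ is then
\[
    -\frac{1}{n^2(n-1)}\sum_{i\neq j}\sum_{k} \frac{K_{ij}}{h^d}\Tilde{\varepsilon}_i c_X^*(U_i)\psi(Z_k,X_j).
\]
Separate the diagonal terms $k\in\{i,j\}$. These form a $U$-statistic of order two multiplied by $1/n$, hence are $O_{\PP}(1/n)$ by \cite[Theorem 3.1]{powell1989semiparametric}. What remains is a symmetrized third-order $U$-statistic with kernel $G_n(Z_i,Z_j,Z_k)$. Its mean is zero because $\E(\psi(Z_k,x))=0$ and $Z_k$ is independent of $(Z_i,Z_j)$. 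Compute the Hoeffding projections:
\begin{itemize}
    \item The projections onto $Z_i$ and onto $Z_j$ vanish, by centering of $\psi$ in $Z_k$.
    \item The second-order projections and the fully degenerate part have variances of order $n^{-2}$ and $n^{-3}h^{-d}$, respectively. Both are $O_{\PP}(n^{-1})$, using $nh^d\to\infty$ and the moment conditions.
    \item The only nonzero first-order projection is the one onto $Z_k$, namely $g_n(Z_k) = \E\bigl(h^{-d}K_{ij}\Tilde{\varepsilon}_i c_X^*(U_i)\psi(Z_k,X_j)\mid Z_k\bigr)$. By the substitution $x_j = x_i + uh$ this tends to
    \[
        g(Z_k) = \int \Delta^*(x)\,p(x)^2\,\psi(Z_k,x)\,\dx.
    \]
\end{itemize}

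\emph{Step 3 (main obstacle).} The projection $\frac1n\sum_k g_n(Z_k)$ is a priori $O_{\PP}(n^{-1/2})$, so the whole lemma hinges on it. To reach the stated rate $O_{\PP}(1/n)$, I would show that $g_n(Z_k)$ vanishes, or is $O(n^{-1/2})$ in $L^2$. Under the null this is immediate, since $\Delta^*\equiv 0$. Under alternatives I would try first to exploit the structure of $\psi$. The $\vartheta$-part of $\psi$ is $\eta_k^\top\partial_\vartheta \boldsymbol{e}^*$. The plan is to use the first-order optimality condition defining $\vartheta^*$ in \eqref{theta_star}, together with the fact that the $\mathbf{u}$- and $u_0$-parts integrate against $\Delta^* p^2$ to quantities expressible through derivatives of $\boldsymbol{e}^*$, and show that $\int \Delta^* p^2 \psi(\cdot,x)\,\dx$ is orthogonal to every $Z_k$. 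If this cancellation cannot be established in general, this is exactly the step where the stated rate must be verified more carefully. In that case I would re-examine the decomposition \eqref{neu1}: the $Z_k$-projection of $E_{n1}$ may need to be paired with the matching projection from $E_{n2}$, whose combined sum would then have to vanish identically.
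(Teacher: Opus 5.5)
\paragraph{Step~3 cannot be completed under alternatives.} Your Steps~1--2 are sound. Step~3, however, cannot be carried out, because the stated rate fails when $\Delta\not\equiv0$.

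The $Z_k$-projection $g(Z_k)=\int\Delta^*(x)p(x)^2\psi(Z_k,x)\,\dx$ is generically non-degenerate, and nothing forces it to vanish. The first-order condition for $\vartheta^*$ in \eqref{theta_star} reads $\int\partial_\vartheta\log c(u_0,\mathbf{u},\vartheta^*)\,\mathrm{d}C(u_0,\mathbf{u})=0$. That is a moment condition on the copula score. It says nothing about the weight $\Delta^*p^2$ being orthogonal to the influence functions of $\hat F_0$, $\hat{\F}$ and $\hat\vartheta_n$.

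Pairing with $E_{n2}$ does not help either. By \eqref{pd5}, the $\Delta$-part of $E_{n1}+E_{n2}$ is, up to sign and negligible terms, $\widetilde{W}_{n3}^{(2)}$. Its kernel projection onto $Z_k$ is asymptotically $\int\M_k(x)\Delta^*(x)p(x)^2\,\dx$. The second moment of that projection is exactly the first summand $\E\bigl(\M_1(X_2)\M_1(X_3)\Delta^*(X_2)\Delta^*(X_3)p(X_2)p(X_3)\bigr)$ of $\sigma_2^2$ in Theorem~\ref{thm4.1}. The paper itself treats this as a genuine $\sqrt{n}$-order contribution.

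Hence under $H_1$, $E_{n1}$ is generically of exact order $n^{-1/2}$. Your Steps~1--2 already give that bound, and it is all Theorem~\ref{thm2.1} needs, since consistency only requires $W_{n2}=o_{\PP}(1)$. The $O_{\PP}(1/n)$ rate holds under $H_0$, or with $\tilde\varepsilon_i$ replaced by the centered $\varepsilon_i$. That is how the rate is actually used, both for Theorem~\ref{thm3.1} and for $W_{n2}^{(1)}$ in Theorem~\ref{thm4.1}.

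It is also the only setting where the paper's own proof is valid:
\begin{itemize}
\item It invokes Zheng's Lemma~3.3b (Lemma~\ref{lemma5.1}) explicitly ``under the null hypothesis''.
\item In the second-moment bound for $E_{n1}^{(1)}$, it discards the configurations $i\neq i'$, $k=k'\notin\{i,i'\}$ by using $\E(\tilde\varepsilon_i\mid X_i)=0$. Those five-index terms are exactly what produce your $Z_k$-projection when $\Delta\not\equiv0$.
\end{itemize}
So rather than hunting for a cancellation, state and prove the lemma under $H_0$.

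\paragraph{Under $H_0$: comparison with the paper's route.} There your argument is complete, but it is organised differently from the paper's. The paper splits $E_{n1}$ into two parts:
\begin{itemize}
\item the empirical-measure part $E_{n1}^{(1)}$, handled by a direct mean and second-moment computation plus Chebyshev;
\item the plug-in part $E_{n1}^{(2)}$, handled by a Taylor expansion, after which the $O_{\PP}(n^{-1/2})$ estimation errors are pulled in front of $\frac{1}{n(n-1)}\sum_{i\neq j}h^{-d}K_{ij}\tilde\varepsilon_i c_X^*(U_i)M(X_j)$ and Zheng's lemma is applied.
\end{itemize}

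Your single third-order $U$-statistic with a full Hoeffding decomposition is more careful exactly where the paper is heuristic. The factors $(\hat F_0-F_0)(Y_k)$ and $(\hat{\F}-\F)(X_j)$ depend on summation indices and cannot simply be factored out. Your route also makes transparent that the null is used only to kill the $Z_k$-projection.

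One simplification: you do not need to sharpen the remainder in \eqref{item6} to $O_{\PP}(n^{-1})$. Since $\hat\vartheta_n-\vartheta^*$ does not depend on $j$, factor it out. The $\vartheta$-contribution is then $(\hat\vartheta_n-\vartheta^*)^\top$ times a second-order $U$-statistic with kernel $h^{-d}K_{ij}\varepsilon_ic_X^*(U_i)\partial_\vartheta\boldsymbol{e}^*(U_j)$. That $U$-statistic is $O_{\PP}(n^{-1/2})$ under $H_0$, so the product is $O_{\PP}(1/n)$ using only the $\sqrt{n}$-rate of $\hat\vartheta_n$.
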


{\it Proof of \eqref{det3}.} Based on the decomposition \eqref{pd5}, we have
\begin{align*}
    W_{n3} =~& \frac{1}{n(n-1)}\sum_{i\not= j}\frac{K_{ij}}{h^d}\Bigl(m^*(X_i) - \hat{m}(X_i)\Bigr)\Bigl(m^*(X_j) - \hat{m}(X_j)\Bigr)\hat{c}_X(\hat{U}_i)\hat{c}_X(\hat{U}_j) \\
    =~& \frac{1}{n(n-1)}\sum_{i\not= j}\frac{K_{ij}}{h^d}\Bigl(\boldsymbol{e}^*(U_i) - \hat{\boldsymbol{e}}(\hat{U}_i)\Bigr)\Bigl(\boldsymbol{e}^*(U_j) - \hat{\boldsymbol{e}}(\hat{U}_j)\Bigr) \\
    &+ \frac{2}{n(n-1)}\sum_{i\not= j}\frac{K_{ij}}{h^d}\Bigl(\boldsymbol{e}^*(U_i) - \hat{\boldsymbol{e}}(\hat{U}_i)\Bigr)m^*(X_j)\Bigl(\hat{c}_X(\hat{U}_j) - c_X^*(U_j)\Bigr) \\
    &+ \frac{1}{n(n-1)}\sum_{i\not= j}\frac{K_{ij}}{h^d} m^*(X_i)m^*(X_j)\Bigl(\hat{c}_X(\hat{U}_i) - c_X^*(U_i)\Bigr)\Bigl(\hat{c}_X(\hat{U}_j) - c_X^*(U_j)\Bigr) \\
    =:~& W_{n3}^{(1)} + 2W_{n3}^{(2)} + W_{n3}^{(3)}\,,
\end{align*}
where the last equality defines $W_{n3}^{(1)}, W_{n3}^{(2)},$ and $W_{n3}^{(3)}$ in an obvious manner. For the first term $W_{n3}^{(1)}$, we have by Lemma  ~\ref{lemmauniform}:
\[
    |W_{n3}^{(1)}| \leq \frac{1}{n(n-1)} \sum_{i\not= j} \frac{|K_{ij}|}{h^d} \|\boldsymbol{e}^* - \hat{\boldsymbol{e}}\|_\infty^2 = O_{\PP}\left(\frac{1}{n}\right) \frac{1}{n(n-1)}\sum_{i\not=j} \frac{|K_{ij}|}{h^d} = O_{\PP}\left(\frac{1}{n}\right),
\]
where we used the previous substitution argument, namely,
\begin{align}
    \E\left(\frac{1}{n(n-1)}\sum_{i\not=j} \frac{|K_{ij}|}{h^d}\right) =  \E\left(\frac{|K_{12}|}{h^d}\right) &= \iint \frac{1}{h^d} \left|K\left(\frac{x_1-x_2}{h}\right)\right|\dx_1\dx_2 \nonumber \\
    &= \int |K(u)|\du = O(1)\,. \label{subst}
\end{align}
For the last term $W_{n3}^{(3)},$ we accordingly have
\begin{align*}
    |W_{n3}^{(3)}| &\leq O_{\PP}\left(\frac{1}{n}\right) \frac{1}{n(n-1)}\sum_{i\not=j} \frac{|K_{ij}| }{h^d}|m^*(X_i)m^*(X_j)|\,, \\
    \E\left[\frac{1}{n(n-1)}\sum_{i\not=j} \frac{|K_{ij}| }{h^d}|m^*(X_i)m^*(X_j)|\right]  &\leq \iint \frac{1}{h^d}\left|K\left(\frac{x_1-x_2}{h}\right)m^*(x_1)m^*(x_2)\right|p(x_1)p(x_2)\dx_1\dx_2 \\
    &= \int |K(u)||m^*(u)||m^*(u-vh)|p(u)p(v)\du\text{d}v \\
    & \leq \sqrt{\E(K_{12}^2)}\sqrt{\E(m^*(X)^4)} + o(1) = O(1)\,,
\end{align*}
where we use the regularity conditions \ref{item1} and \ref{item5}. The second term is treated in an analogous fashion, and we conclude that $W_{n3} = O_{\PP}(1/n)$. Putting \eqref{pd4}, \eqref{det2} and \eqref{det3} together, the claim follows. 

\subsection*{Proof of Theorem~\ref{thm3.1}}
If the classical null hypothesis \eqref{null1} is true, then the decomposition of $W_n$ applied in the proof of Theorem~\ref{thm2.1} looks as follows: $W_n = W_{n1} + 2W_{n2} + W_{n3},$ with
\begin{align} 
    W_{n1} &:= \frac{1}{n(n-1)}\sum_{i\not= j}\frac{K_{ij}}{h^d}\varepsilon_i\varepsilon_j\hat{c}_X(\hat{U}_i)\hat{c}_X(\hat{U}_j)\,, \label{neu2} \\
    W_{n2} &:= \frac{1}{n(n-1)}\sum_{i\not= j}\frac{K_{ij}}{h^d}\varepsilon_i\bigl(m(X_j) - \hat{m}(X_j)\bigr)\hat{c}_X(\hat{U}_i)\hat{c}_X(\hat{U}_j)\,,\nonumber \\
    W_{n3} &:= \frac{1}{n(n-1)}\sum_{i\not= j}\frac{K_{ij}}{h^d}\bigl(m(X_i) - \hat{m}(X_i)\bigr)\bigl(m(X_j) - \hat{m}(X_j)\bigr)\hat{c}_X(\hat{U}_i)\hat{c}_X(\hat{U}_j)\,.\nonumber 
\end{align}
By the arguments in the proof of Theorem~\ref{thm2.1}, we have $W_{n2} = O_{\PP}(1/n)$ and $W_{n3} = O_{\PP}(1/n)$, implying $nh^{d/2}W_{n2} = o_{\PP}(1)$ and $nh^{d/2}W_{n3} = o_{\PP}(1)$. The asymptotic normality of $nh^{d/2}W_n$ under $H_0$ follows from showing 
\[
    nh^{d/2}W_{n1} \overset{\D}{\longrightarrow} \NN(0,\sigma_0^2)\,.
\]
By Lemma~\ref{lemmareplace1a}, we can consider 
\[
    \widetilde{W}_{n1} := \frac{1}{n(n-1)} \sum_{i\not=j} \frac{K_{ij}}{h^d} \varepsilon_i\varepsilon_j c_X(U_i)c_X(U_j)\,,
\]
which can be written as a $U$-statistic $\widetilde{W}_{n1} = \displaystyle{\frac{1}{n(n-1)}\sum\nolimits_{i\not= j} H_n(Z_i, Z_j)}$ with the symmetric kernel
\[
    H_n(Z_i, Z_j) = \frac{K_{ij}}{h^d}\varepsilon_i\varepsilon_jc_X(\F(X_i))c_X(\F(X_j))\,,
\]
where $Z_i = (X_i, \varepsilon_i)$. It turns out that $\widetilde{W}_{n1}$ is a degenerate $U$-statistic, and we use Hall's theorem \cite[Theorem 1]{hall1984central} to determine the asymptotic behavior of $\widetilde{W}_{n1}$. Due to the regularity conditions \ref{item1} and \ref{item4}, the condition of Hall's theorem follows as demonstrated in \cite[Lemma 3.3a]{zheng1996consistent}. By said theorem, $n\widetilde{W}_{n1}/\sqrt{2\E\bigl(H_n(Z_1,Z_2)^2\bigr)}\overset{\D}{\longrightarrow} \NN(0,1),$ which means that the asymptotic distribution of $n\widetilde{W}_{n1}$ is determined by
\begin{align*}
    &\E(H_n(Z_1,Z_2)^2) \\
    =~& \frac{1}{h^{2d}} \int K^2\left(\frac{x_1 - x_2}{h}\right)\sigma^2(x_1)\sigma^2(x_2)c_X(\F(x_1))c_X(\F(x_2))p(x_1)p(x_2)\dx_1\dx_2 \\
    =~& \frac{1}{h^d} \int K^2(u)\sigma^2(x)\sigma^2(x-hu)c_X(\F(x))c_X(\F(x-hu))p(x)p(x-hu)\dx\du \\
    =~& \frac{1}{h^d} \int K^2(u)\du \int \sigma^2(x)^2c_X(\F(x))^2p(x)^2\dx + o(h^{-d}) \\
    =~& \frac{1}{h^d}\frac{\sigma_0^2}{2} + o(h^{-d})\,,
\end{align*}
hence, $nh^{d/2}\widetilde{W}_{n1} \overset{\D}{\longrightarrow} \NN(0,\sigma_0^2)$. By Lemma \ref{lemmareplace1a}, the same is true for $W_{n1}$.

At last, we show that the variance estimator $\hat{\sigma}_{0,n}^2$ as given in \eqref{sigmahat} is consistent. We first expand $e_i^2e_j^2$ by means of $e_i = \varepsilon_i + m(X_i) - \hat{m}(X_i)$, which gives the decomposition
\begin{align*}
    \hat{\sigma}_{0,n}^2 &= \frac{2}{n(n-1)} \sum_{i \not= j} \frac{K_{ij}^2}{h^d}\hat{c}_X(\hat{U}_i)^2\hat{c}_X(\hat{U}_j)^2  \left[\varepsilon_i^2\varepsilon_j^2 + 2\varepsilon_i^2(m(X_j) - \hat{m}(X_j))^2\right. \\
    &\hspace{6.9cm} + 4\varepsilon_i^2\varepsilon_j (m(X_j) - \hat{m}(X_j))  \\
    &\hspace{6.9cm} + 4\varepsilon_i\varepsilon_j(m(X_i) - \hat{m}(X_i))(m(X_j) - \hat{m}(X_j)) \\
    &\hspace{6.9cm} + 4\varepsilon_i (m(X_i) - \hat{m}(X_i))(m(X_i) - \hat{m}(X_j))^2 \\
    &\hspace{6.9cm} + (m(X_i) - \hat{m}(X_i))^2(m(X_j) - \hat{m}(X_j))^2\Bigr]\,.
\end{align*}
Applying \eqref{pd5} and using the abbreviation $\EE_i := \boldsymbol{e}(U_i) - \hat{\boldsymbol{e}}(\hat{U}_i) + m(X_i)\Bigl(\hat{c}_X(\hat{U}_i) - c_X(U_i)\Bigr)$, we have
\begin{align*}
    \hat{\sigma}_{0,n}^2 &= \frac{2}{n(n-1)}\sum_{i\not=j} \frac{K_{ij}^2}{h^d} \left[\hat{c}_X(\hat{U}_i)^2\hat{c}_X(\hat{U}_j)^2\varepsilon_i^2\varepsilon_j^2 + 2\hat{c}_X(\hat{U}_i)^2\varepsilon_i^2\EE_j^2 + 4\hat{c}_X(\hat{U}_i)^2\hat{c}_X(\hat{U}_j)\varepsilon_i^2\varepsilon_j\EE_j \right. \\
    & \hspace{4cm}  + 4\hat{c}_X(\hat{U}_i)\hat{c}_X(\hat{U}_j)\varepsilon_i\varepsilon_j\EE_i\EE_j + \left.4\hat{c}_X(\hat{U}_i)\varepsilon_i\EE_i\EE_j^2 + \EE_i^2\EE_j^2\right]\hspace{0.1mm}.
\end{align*}
To replace all $\hat{c}_X(\hat{U}_i), \hat{c}_X(\hat{U}_j)$ with the true quantities $c_X(U_i), c_X(U_j),$ we use a first-order Taylor expansion on the functions $(\textbf{u}_i, \textbf{u}_j, \vartheta) \mapsto c_X(\textbf{u}_i, \vartheta)^{k_1} c_X(\textbf{u}_j, \vartheta)^{k_2}$, with $k_1, k_2 \in \{0,1,2\}$. Due to $\|U_i - \hat{U}_i\|_\infty = O_{\PP}(n^{-1/2})~\forall i$ by Donsker's theorem for the empirical distribution function, and $\hat{\vartheta}_n - \vartheta_0 = O_{\PP}(n^{-1/2})$ by  \eqref{item6}, and due to the regularity assumption \ref{item4}, this leaves an error of $o_{\PP}(1),$ so we can write $\hat{\sigma}_{0,n}^2 = \widetilde{\sigma}_{0,n}^2 + o_{\PP}(1)$ with
\begin{align*}
    \widetilde{\sigma}_{0,n}^2 &:= \frac{2}{n(n-1)}\sum_{i\not=j} \frac{K_{ij}^2}{h^d} \left[c_X(U_i)^2c_X(U_j)^2\varepsilon_i^2\varepsilon_j^2 + 2c_X(U_i)^2\varepsilon_i^2\EE_j^2 + 4c_X(U_i)^2c_X(U_j)\varepsilon_i^2\varepsilon_j\EE_j \right. \\
    & \hspace{4cm}  + 4c_X(U_i)c_X(U_j)\varepsilon_i\varepsilon_j\EE_i\EE_j + \left.4c_X(U_i)\varepsilon_i\EE_i\EE_j^2 + \EE_i^2\EE_j^2\right]\hspace{0.1mm}.
\end{align*}
We decompose $\widetilde{\sigma}_{0,n}^2$ further into
\[
    \widetilde{\sigma}_{0,n}^2 = S_{n1} + S_{n2}\,,
\]
where 
\begin{align*} 
    S_{n1} &:= \frac{2}{n(n-1)} \sum_{i \not= j} \frac{K_{ij}^2}{h^d} c_X(U_i)^2c_X(U_j)^2\varepsilon_i^2\varepsilon_j^2\,, \\
    S_{n2} &:= \frac{2}{n(n-1)}\sum_{i\not=j} \frac{K_{ij}^2}{h^d} \left[2c_X(U_i)^2\varepsilon_i^2\EE_j^2 + 4c_X(U_i)^2c_X(U_j)\varepsilon_i^2\varepsilon_j\EE_j \right. \\
    & \hspace{4cm}  + 4c_X(U_i)c_X(U_j)\varepsilon_i\varepsilon_j\EE_i\EE_j + \left.4c_X(U_i)\varepsilon_i\EE_i\EE_j^2 + \EE_i^2\EE_j^2\right]\hspace{0.1mm}. 
\end{align*}
By the same arguments as given in the proof of \cite[Lemma 3.3e]{zheng1996consistent}, and by the boundedness of $c_X$, we have $S_{n1} \overset{{\PP}}{\longrightarrow} \sigma_0^2,$ while in $S_{n2}$, each summand involves at least one factor $\EE_j$, which is uniformly of order $O_{\PP}(n^{-1/2})$ by  Lemma~\ref{lemmauniform}. So, by the substitution argument as in \eqref{subst}, we have $S_{n2} = o_{\PP}(1)$. In conclusion, $\hat{\sigma}_{0,n}^2 \overset{\PP}{\longrightarrow} \sigma_0^2$, which completes the proof of Theorem~\ref{thm3.1}. \hfill \qed

\subsection*{Proof of Theorem~\ref{thm4.1}}

We now decompose $W_n$ in a way that addresses both the estimation error $\hat{m} - m^*$ and the model misspecification $m - m^*$. Take $W_{n1}$ as in \eqref{neu2} and set 
\begin{align*}
    W_{n2}^{(1)} &:= \frac{1}{n(n-1)}\sum_{i\not=j} \frac{K_{ij}}{h^d} \varepsilon_i \bigl(\hat{m}(X_j) - m^*(X_j)\bigr)\hat{c}_X(\hat{U}_i)\hat{c}_X(\hat{U}_j)\,, \\
    W_{n2}^{(2)} &:= \frac{1}{n(n-1)}\sum_{i\not=j} \frac{K_{ij}}{h^d} \varepsilon_i \Delta(X_j)\hat{c}_X(\hat{U}_i)\hat{c}_X(\hat{U}_j)\,, \\
    W_{n3}^{(1)} &:= \frac{1}{n(n-1)}\sum_{i\not=j} \frac{K_{ij}}{h^d}\bigl(\hat{m}(X_i) - m^*(X_i)\bigr)\bigl(\hat{m}(X_j) - m^*(X_j)\bigr)\hat{c}_X(\hat{U}_i)\hat{c}_X(\hat{U}_j)\,, \\
    W_{n3}^{(2)} &:= \frac{1}{n(n-1)}\sum_{i\not=j} \frac{K_{ij}}{h^d}\Delta(X_i)\bigl(\hat{m}(X_j) - m^*(X_j)\bigr)\hat{c}_X(\hat{U}_i)\hat{c}_X(\hat{U}_j)\,, \\
    W_{n3}^{(3)} &:= \frac{1}{n(n-1)}\sum_{i\not=j} \frac{K_{ij}}{h^d} \Delta(X_i)\Delta(X_j)\hat{c}_X(\hat{U}_i)\hat{c}_X(\hat{U}_j)\,,
\end{align*}
hence, we obtain the decomposition 
\[
W_n = W_{n1} - 2\Bigl(W_{n2}^{(1)} - W_{n2}^{(2)}\Bigr) + \Bigl(W_{n3}^{(1)} - 2W_{n3}^{(2)} + W_{n3}^{(3)}\Bigr)\,.
\]
Reiterating the arguments in the proofs of Theorem~\ref{thm2.1} and~\ref{thm3.1}, we have that 
\begin{align*}
    W_{n1}  & = O_{\PP}\left((nh^{d/2})^{-1}\right) 
    = O_{\PP}\left(n^{-1/2} \cdot (nh^d)^{-1/2}\right) 
    = o_{\PP}(n^{-1/2}) \,, \\
 W_{n2}^{(1)} &= o_{\PP}\left((nh^{d/2})^{-1}\right)  
    = o_{\PP}(n^{-1/2}) \,,\qquad W_{n3}^{(1)} = o_{\PP}(n^{-1/2})\,,
\end{align*}
so these terms do not contribute to the asymptotic behavior of $\sqrt{n}W_n$, i.e.,
\[
    \sqrt{n}W_n = 2\sqrt{n}\left(W_{n2}^{(2)} - W_{n3}^{(2)}\right) + \sqrt{n}W_{n3}^{(3)} + O_{\PP}\left(\frac{1}{\sqrt{n}h^{d/2}}\right). 
\]
As in the proof of Theorem~\ref{thm3.1}, we need to replace $\hat{c}_X(\hat{U}_i)\hat{c}_X(\hat{U}_j)$ with its true analogue, which is now $c_X^*(U_i)c_X^*(U_j)$ for $c_X^*(\textbf{u}) = c_X(\textbf{u}, \vartheta^*)$. By the same arguments as before (using Lemma~\ref{lemmauniform} for $W_{n3}^{(2)}$ and the proof mechanism of Lemma~\ref{lemmareplace1a} for $W_{n2}^{(2)}$ and $W_{n3}^{(3)}$), this only leaves an error of $O_{\PP}(1/n)$. We obtain 
\begin{equation}
    \sqrt{n}W_n = 2\sqrt{n}\left(\widetilde{W}_{n2}^{(2)} - \widetilde{W}_{n3}^{(2)}\right) + \sqrt{n}\widetilde{W}_{n3}^{(3)} + O_{\PP}\left(\frac{1}{\sqrt{n}}\right), \label{eqn5}
\end{equation}
where $\widetilde{W}_{n2}^{(2)}, \widetilde{W}_{n2}^{(3)}, \widetilde{W}_{n3}^{(3)}$ emerge from $W_{n2}^{(2)}, W_{n2}^{(3)}, W_{n3}^{(3)}$, respectively, by replacing $\hat{c}_X(\hat{U}_i)\hat{c}_X(\hat{U}_j)$ with $c_X^*(U_i)c_X^*(U_j)$. 

To analyze $\sqrt{n}\left(\widetilde{W}_{n3}^{(2)} - \widetilde{W}_{n2}^{(2)}\right)$, we use the asymptotic representation in \cite[Theorem 3]{Noh2013}, and, upon expanding $\partial \boldsymbol{e}^* = \partial (m^*c_X^*) = c_X^*\partial m^* + m^* \partial c_X^*$, we obtain
\[
    \sqrt{n}(\hat{m}(x) - m^*(x)) = \frac{1}{\sqrt{n}}\frac{1}{c_X^*(\F(x))}\sum_{k=1}^n \M_k(x) + o_{\PP}(1)
\]
with $\M_k(x)$ as stated in \eqref{mkx}. Therefore, 
\begin{align*}
    \sqrt{n}\left(\widetilde{W}_{n3}^{(2)} - \widetilde{W}_{n2}^{(2)}\right) &= \frac{\sqrt{n}}{n(n-1)} \sum_{i\not=j} \frac{K_{ij}}{h^d} \Delta^*(X_i) c_X^*(U_j)(\hat{m}(X_j) - m^*(X_j) - \varepsilon_j)  \\
    &= \frac{\sqrt{n}}{n(n-1)}\sum_{i\not=j} \frac{K_{ij}}{h^d} \Delta^*(X_i)\frac{1}{n}\sum_{k=1}^n [\M_k(X_j) - \varepsilon_j] + o_{\PP}(1)\,. 
\end{align*}
We first ignore the colliding indices $k=i$ and $k=j,$ i.e., we disregard 
\[
    \frac{\sqrt{n}}{n^2(n-1)}\sum_{i\not=j} \frac{K_{ij}}{h^d} \Delta^*(X_i)(\M_i(X_j) - \varepsilon_i + \M_j(X_j) - \varepsilon_j) = o_{\PP}(1)\,,
\]
which is assured by regularity. Then, we regroup the summation indices so that 
\begin{align*}  \sqrt{n}\left(\widetilde{W}_{n3}^{(2)} - \widetilde{W}_{n2}^{(2)}\right) &= \sqrt{n}\,W_{n4} + o_{\PP}(1)\,,
\end{align*}
where 
\begin{align}
\label{det100}
W_{n4} &:= \frac{1}{n(n-1)}\sum_{i\not=j} (\M_i(X_j) - \varepsilon_j)\frac{1}{n} \sum_{\substack{k=1 \\ k\not=i,j}}^n \frac{K_{kj}}{h^d}\Delta^*(X_k)\,.
\end{align}

\begin{lemma} \label{lemma4.2}
The random variable 
\[
    W_{n4} = \frac{1}{n^2(n-1)}\sum_{i\neq j\neq k} \frac{K_{ij}}{h^d}\Delta^*(X_i)(\M_k(X_j) - \varepsilon_j)  
\]
is centered. 
\end{lemma}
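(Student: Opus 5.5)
We need to show that $\E(W_{n4})=0$, where the three indices $i,j,k$ are pairwise distinct and the observations $Z_l=(X_l,Y_l)$ are i.i.d. Our plan is to condition on everything except the observation carrying the index $k$. This index enters only through $\M_k$, so the expectation should factor.

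\emph{Step 1: reduce to a single summand.} By linearity and exchangeability, all summands with pairwise distinct indices have the same expectation. It therefore suffices to show that
\[
    \E\Bigl(\tfrac{K_{ij}}{h^d}\Delta^*(X_i)\bigl(\M_k(X_j)-\varepsilon_j\bigr)\Bigr)=0
\]
for fixed distinct $i,j,k$. We split this into the $\M_k$-part and the $\varepsilon_j$-part.

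\emph{Step 2: the $\M_k$-part.} The factor $\tfrac{K_{ij}}{h^d}\Delta^*(X_i)$ depends only on $(Z_i,Z_j)$, and $Z_k$ is independent of $(Z_i,Z_j)$. Conditioning on $(Z_i,Z_j)$, we must show that for each fixed $x$,
\[
    \E(\M_k(x))=0.
\]
Each building block of $\M_k(x)$ in \eqref{mkx} is centered:
\begin{itemize}
    \item $\E(\eta_k)=0$, by the expansion \eqref{item6} and the first-order condition defining $\vartheta^*$ in \eqref{theta_star}. The derivative of the Kullback--Leibler criterion vanishes at the interior point $\vartheta^*$ by \ref{item2}, and the integral correction terms in $\eta$ are centered because $\E(\mathbf{1}\{U_j\le v_j\}-v_j)=0$.
    \item $\E\bigl(\mathbf{1}\{X_k^{(l)}\le x_l\}\bigr)-F_l(x_l)=0$.
    \item $\E\bigl(\mathbf{1}\{Y_k\le y\}\bigr)-F_0(y)=0$, and moving the expectation inside the $\dy$-integral is justified by Fubini. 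The integrability needed for this comes from \ref{item3} and the moment conditions in \ref{item1}.
\end{itemize}
Since the remaining factors ($c_X(\mathbf{u},\vartheta^*)$, $\partial_\vartheta m^*(x)$, $\partial_{x_l}m^*(x)$, $\partial_{u_l}F_l^{-1}$, $c(F_0(y),\F(x),\vartheta^*)$) are deterministic, linearity gives $\E(\M_k(x))=0$. Hence $\E\bigl(\M_k(X_j)\mid Z_i,Z_j\bigr)=0$, and the product with $\tfrac{K_{ij}}{h^d}\Delta^*(X_i)$ has mean zero. The tower property applies because the product is integrable: $K$ is bounded by \ref{item5} and the second moments in \ref{item1}, \ref{item3} suffice.

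\emph{Step 3: the $\varepsilon_j$-part.} Condition on $(X_i,X_j)$. In the model \eqref{pd2}, $\varepsilon_j=Y_j-\E(Y_j\mid X_j)$ satisfies $\E(\varepsilon_j\mid X_j)=0$. Since $Z_j$ is independent of $Z_i$, we get $\E(\varepsilon_j\mid X_i,X_j)=0$. The remaining factor $\tfrac{K_{ij}}{h^d}\Delta^*(X_i)$ is $(X_i,X_j)$-measurable, so this part also has mean zero.

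\emph{Main obstacle.} The only delicate point is the centering of $\eta_k$. It relies on the M-estimation characterization of $\vartheta^*$ and on the rank-correction terms, as in \citet{tsukahara2005semiparametric} and Remark~3 of \citet{Noh2013}. We will take this from the cited expansion \eqref{item6}, where $\eta_k$ is stated to be mean zero, together with the integrability that permits exchanging expectation and integral.
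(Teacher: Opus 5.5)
Your proposal is correct and follows essentially the same route as the paper. Both condition on the pair $(i,j)$, use the independence of the $k$-th observation to show that each building block of $\M_k(X_j)$ (the $\eta_k$-term, the indicator terms in $X_k^{(l)}$, and the integral term in $Y_k$) has conditional mean zero, and dispose of the $\varepsilon_j$-part via $\E(\varepsilon_j\mid X_i,X_j)=0$. The differences are cosmetic: the paper conditions on $(X_i,X_j)$ and takes $\E(\eta_k)=0$ directly from the expansion \eqref{item6}, whereas you condition on $(Z_i,Z_j)$ and also sketch why $\eta_k$ is centered.
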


Next, we replace the kernel estimation $n^{-1}\displaystyle{\sum\nolimits_{k=1, k\not=j}^n h^{-d}K_{kj}\Delta^*(X_k)}$ by the actual density $\Delta^*(X_j)p(X_j)$ to obtain
\begin{equation}
    \overline{W}_{n4} := \frac{1}{n(n-1)}\sum_{i\not=j} (\M_i(X_j) - \varepsilon_j) \Delta^*(X_j)p(X_j)\,, \label{wn4quer}
\end{equation}
which is centered for the same reasons as in Lemma \ref{lemma4.2}. We can show that $\sqrt{n}(W_{n4} - \overline{W}_{n4}) \rightarrow 0$ in quadratic mean. 

\begin{lemma} \label{lemma4.3}
    $$
    \E\left((W_{n4} - \overline{W}_{n4})^2\right) = o(n^{-1})\,.
    $$
\end{lemma}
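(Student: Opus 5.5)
We write the difference as a sum with i.i.d. structure and compute its second moment directly. Set $\xi_{ij} := \M_i(X_j) - \varepsilon_j$ and
\[
    D_j := \frac{1}{n}\sum_{k\neq i,j} \frac{K_{kj}}{h^d}\Delta^*(X_k) - \Delta^*(X_j)p(X_j)\,,
\]
so that, up to the harmless factor $(n-2)/n$ in front of the density term (whose contribution is $O(n^{-1})\cdot\overline{W}_{n4} = O_{\PP}(n^{-3/2})$ in $L^2$ because $\overline{W}_{n4}$ is a centered $U$-statistic of order $n^{-1/2}$), we have $W_{n4} - \overline{W}_{n4} = \frac{1}{n(n-1)}\sum_{i\neq j}\xi_{ij}\,\widetilde D_{j}^{(i)}$. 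Here $\widetilde D_j^{(i)} := \frac{1}{n}\sum_{k\neq i,j}\bigl(h^{-d}K_{kj}\Delta^*(X_k) - g(X_j)\bigr)$ and $g := \Delta^* p$. Expanding the square gives a fourfold sum over $(i,j,k)$ and $(i',j',k')$ of terms
\[
    \E\bigl[\xi_{ij}\,\zeta_{kj}\,\xi_{i'j'}\,\zeta_{k'j'}\bigr], \qquad \zeta_{kj} := h^{-d}K_{kj}\Delta^*(X_k) - g(X_j).
\]
We classify these terms by the pattern of coinciding indices, exactly as in a Hoeffding-type variance computation.

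The key centering facts are the following. First, $\E(\xi_{ij}\mid X_j) = 0$, since $\M_i(x)$ is a mean-zero functional of $(X_i,Y_i)$ (indicator minus distribution function, and $\eta_i$ centered), and $\varepsilon_j$ is centered given $X_j$. Second, $\E(\zeta_{kj}\mid X_j) = \int K(u)g(X_j+uh)\du - g(X_j) =: b_h(X_j)$, which tends to $0$ uniformly (or at least in $L^2$) by continuity of $g$ under \ref{item1}; here only $\int K = 1$ is needed, not a rate. With these facts, any term in which some index among $i,i',k,k'$ appears only once, and is distinct from the $j$'s, has zero expectation after conditioning on the remaining variables. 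The surviving configurations fall into two groups.

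\begin{itemize}
\item[(i)] Terms with $j\neq j'$ and $i = i'$, with $k,k'$ free. There are $O(n^5)$ such terms, normalized by $n^{-6}$. Conditioning on $X_i,Y_i$ and $X_j,X_{j'}$, the inner average over $k$ is replaced by its conditional mean $b_h(X_j)$, and its fluctuation is of variance $O((nh^d)^{-1})$. This yields a contribution $O(n^{-1})\,\E\bigl[\xi_{ij}\xi_{ij'}\,b_h(X_j)b_h(X_{j'})\bigr] + O\bigl(n^{-1}(nh^d)^{-1}\bigr)$, and the first part is $o(n^{-1})$ by dominated convergence, using the moment bounds \ref{item3}--\ref{item4} for $\M_i$.
\item[(ii)] Terms with $j = j'$. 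There are $O(n^4)$ such terms. Using $\E\bigl[(\frac1n\sum_k\zeta_{kj})^2\mid X_j\bigr] = b_h(X_j)^2 + O\bigl((nh^d)^{-1}\bigr)\cdot h^{-0}$ times moments, these give $O(n^{-2})\bigl(\E[\xi^2 b_h^2] + (nh^d)^{-1}\E[\xi^2 h^{-d}K^2\cdots]\bigr)$. The $h^{-d}$ from the second moment of the kernel is absorbed: $(nh^d)^{-1}$ is exactly the variance factor, and this is $o(n^{-1})$.
\end{itemize}

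The remaining coincidence patterns, such as $k = i'$ or $k = j'$, carry at most $n^{4}$ terms with an extra factor $h^{-d}$, giving $O\bigl(n^{-2}h^{-d}\bigr) = o(n^{-1})$ because $nh^d\to\infty$.

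The main obstacle is the bookkeeping in group (i) when $k$ or $k'$ coincides with $i$. There $\zeta$ and $\xi$ share the variable $(X_i,Y_i)$, so the conditional-centering trick does not apply directly. We handle these terms by bounding them crudely with Cauchy--Schwarz and the moment assumptions \ref{item3}--\ref{item4} for $\M_i$. The substitution argument \eqref{subst} gives $\E(h^{-d}|K_{ij}|\cdot\ldots) = O(1)$, and there are only $O(n^4)$ such terms against the normalization $n^{-6}$, which yields $O(n^{-2})$. Collecting all cases gives $\E\bigl((W_{n4}-\overline{W}_{n4})^2\bigr) = o(n^{-1})$.
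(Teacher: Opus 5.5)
Your strategy is the paper's: expand the square, discard index patterns by conditional centering, control the surviving collision $i=i'$ through the kernel bias $b_h(X_j)=\E(\zeta_{kj}\mid X_j)$, and charge the remaining coincidences $O(n^{-2}h^{-d})$. Using only $b_h\to0$ is actually better matched to \ref{item1} and \ref{item5} than the paper's claimed rate $O(h^2)$.

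One step, however, is false as stated. The fact $\E(\xi_{ij}\mid X_j)=0$ is true. But discarding a term with a lone index $i$ needs $\E(\xi_{ij}\mid \text{all variables except }(X_i,Y_i))=0$. Since $\xi_{ij}=\M_i(X_j)-\varepsilon_j$, that conditional expectation equals $-\varepsilon_j$. A lone $i$ therefore kills only $\M_i(X_j)$; the part $-\varepsilon_j$ vanishes only if $Y_j$ appears nowhere else in the product. So two five-index configurations survive (all other indices distinct):
\begin{align*}
    j=j',\ i\neq i'&:\quad \E\bigl(\xi_{ij}\zeta_{kj}\xi_{i'j}\zeta_{k'j}\bigr)=\E\bigl(\varepsilon_j^2\,b_h(X_j)^2\bigr)\,,\\
    i'=j&:\quad \E\bigl(\xi_{ij}\zeta_{kj}\xi_{jj'}\zeta_{k'j'}\bigr)=-\E\bigl(\varepsilon_j\,\M_j(X_{j'})\,b_h(X_j)\,b_h(X_{j'})\bigr)\,,
\end{align*}
and symmetrically $i=j'$. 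The second is nonzero in general, because $\M_j$ depends on $Y_j$ through $\eta_j$ and $\mathbf{1}\{Y_j\le y\}$. With about $n^5$ such terms against the normalization $n^{-6}$, they contribute $O(n^{-1})$ times a squared-bias expectation. So your count of $O(n^4)$ terms in (ii) is wrong, and the pattern $i'=j$ is not covered at all.

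The conclusion survives, since these expectations tend to zero by the dominated-convergence argument you already use in (i). The cleanest repair is to split the main term before expanding: write $\frac{1}{n(n-1)}\sum_{i\neq j}\xi_{ij}\widetilde D_j^{(i)}=A_n-B_n$ with
\[
    B_n:=\frac{1}{n(n-1)}\sum_{i\neq j}\varepsilon_j\widetilde D_j^{(i)}=\frac{n-2}{n^2(n-1)}\sum_j\varepsilon_j\sum_{k\neq j}\zeta_{kj}\,,
\]
and use $(a-b)^2\le 2a^2+2b^2$. Since $\E(\varepsilon_j\mid X_1,\ldots,X_n,(\varepsilon_l)_{l\neq j})=0$ and the $\zeta$'s depend only on covariates, only the diagonal survives in $\E(B_n^2)$. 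This gives $\E(B_n^2)=O(n^{-1})\,\E\bigl(\varepsilon_1^2b_h(X_1)^2\bigr)+O(n^{-2}h^{-d})=o(n^{-1})$. For $A_n$, built from $\M_i(X_j)$ alone, the lone-index centering is valid: only $i=i'$ survives among single collisions, and your cases (i) and (ii) go through as written.

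The paper's proof has the same blind spot. Its claim that every single-collision case other than $i_1=i_2$ vanishes fails for $j_1=j_2$ and $i_2=j_1$ for exactly this reason, so the same repair applies there.

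Finally, the ``main obstacle'' you describe does not occur. The inner sum excludes $k=i$, so in group (i) neither $k$ nor $k'$ can equal $i$. The genuinely shared-variable patterns, such as $k=i'$ or $k=j'$ with $i\neq i'$, vanish unless a further collision occurs, which leaves at most four free indices.
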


Overall, we have so far demonstrated that
\begin{align}
    \sqrt{n}(W_n - \mu_n) &= \sqrt{n}\left(\widetilde{W}_{n3}^{(3)} - \mu_n + 2\left(\widetilde{W}_{n3}^{(2)} - \widetilde{W}_{n2}^{(2)}\right)\right) + o_{\PP}(1) \label{decomp} \\
    &= \sqrt{n}\left(\widetilde{W}_{n3}^{(3)} - \mu_n + 2\overline{W}_{n4}\right) + o_{\PP}(1)\,. \nonumber
\end{align}
Noting that $\E\left(\widetilde{W}_{n3}^{(3)} + 2\overline{W}_{n4}\right) = \E\left(\widetilde{W}_{n3}^{(3)}\right) = \mu_n,$ we can represent $\widetilde{W}_{n3}^{(3)} + 2\overline{W}_{n4}$ as a $U$-statistic with the symmetric kernel
\[
    \HH_n(Z_i, Z_j) := H_n(Z_i, Z_j) + 2\HH(Z_i, Z_j)\,,
\]
where 
\begin{align}
    H_n(Z_i, Z_j) &:= h^{-d}K_{ij}\Delta^*(X_i)\Delta^*(X_j)\,, \label{hnxixj} \\
    \HH(Z_i,Z_j) &:= \frac{1}{2}\Bigl((\M_i(X_j) - \varepsilon_j)\Delta^*(X_j)p(X_j) + (\M_j(X_i) - \varepsilon_i)\Delta^*(X_i)p(X_i)\Bigr)\,. \label{hzizj}
\end{align}
The asymptotic normality of 
\[
    \sqrt{n}\left(\widetilde{W}_{n3}^{(3)} + 2\overline{W}_{n4} - \mu_n \right) = \sqrt{n}\left(\frac{1}{n(n-1)}\sum_{i \neq j} \HH_n(Z_i, Z_j) - \mu_n\right)
\]
is clear from standard arguments (see, e.g., \cite[Theorem 2]{dette2004some}). To determine the limiting variance, we first consider the two parts $\sqrt{n}\left(W_{n3}^{(3)} - \mu_n\right)$ and $\sqrt{n}\,\overline{W}_{n4}$ individually. 
The variance of $\sqrt{n}\,\overline{W}_{n4}$ is explicitly calculated as follows:

\begin{lemma} \label{lemma4.4}
    The variance of $\sqrt{n}\,\overline{W}_{n4}$ is given by
\begin{align*}
    \Var(\sqrt{n}\,\overline{W}_{n4}) &= \frac{n-2}{n-1}\Bigl(\E\bigl(\M_1(X_2)\M_1(X_3)\Delta^*(X_2)\Delta^*(X_3)p(X_2)p(X_3)\bigr) \\
    & \hspace{1cm}+ \E\bigl(\varepsilon_1^2 \Delta^*(X_1)^2 p(X_1)^2\bigr)\Bigr) + o_{\PP}(1)\,.
\end{align*}
\end{lemma}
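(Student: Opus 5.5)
The plan is a direct second-moment computation for the second-order $U$-type statistic
\[
    \overline{W}_{n4} = \frac{1}{n(n-1)}\sum_{i\neq j} \xi_{ij}\,, \qquad \xi_{ij} := \bigl(\M_i(X_j) - \varepsilon_j\bigr)\Delta^*(X_j)p(X_j)\,,
\]
which is not symmetric in $(i,j)$. By Lemma~\ref{lemma4.2} (and the remark after \eqref{wn4quer}) we know $\E(\xi_{ij})=0$, so
\[
    \Var\bigl(\sqrt{n}\,\overline{W}_{n4}\bigr) = \frac{1}{n(n-1)^2}\sum_{i\neq j}\sum_{k\neq l}\E(\xi_{ij}\xi_{kl})\,.
\]
I will sort the quadruples $(i,j,k,l)$ by their overlap pattern and count them. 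Only patterns with $O(n^3)$ members survive the normalization $n^{-1}(n-1)^{-2}$, and those with $O(n^2)$ members contribute $O(1/n)$.

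First I record the key centering facts. By \eqref{mkx}, $\M_i(x)$ is, for fixed $x$, a linear functional of $Z_i$ with $\E(\M_i(x))=0$. This holds because $\eta_i$ is centered, the indicator terms are centered, and $\E(\mathbf{1}\{Y_i\le y\}-F_0(y))=0$. Also $\E(\varepsilon_j\mid X_j)=0$, and $\varepsilon_j$ is independent of $X_j$ with variance $\sigma^2$ by the model assumption.

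For distinct indices, $\E(\xi_{ij}\xi_{kl})=0$ whenever some index appears only once through a centered factor. The $O(n^3)$ patterns are then:
\begin{itemize}
\item $i=k$, $j\neq l$. Here $\E(\xi_{ij}\xi_{il})$ splits into four parts. The $\M\M$ part, conditioning on $Z_i$, gives $\E(\M_1(X_2)\M_1(X_3)\Delta^*(X_2)\Delta^*(X_3)p(X_2)p(X_3))$. The cross terms $\M_i(X_j)\varepsilon_l$ vanish because $\varepsilon_l$ is centered and independent of everything else. The $\varepsilon_j\varepsilon_l$ term vanishes likewise.
\item $j=l$, $i\neq k$. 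This gives $\E(\M_i(X_j)\M_k(X_j)\cdots)$, which is $0$ by independence of $Z_i,Z_k$ given $X_j$ and centering. It also gives the term $\E(\varepsilon_j^2\Delta^*(X_j)^2p(X_j)^2)$. The cross terms $\E(\M_i(X_j)\varepsilon_j\cdots)$ vanish since $Z_i$ is independent of $Z_j$ and $\M_i$ is centered.
\item $i=l$ or $j=k$ with the remaining indices distinct. These cross terms involve $\M_i(X_j)$ times a factor in which $Z_i$ enters only through $X_i$, or $\varepsilon_i$, while $Z_k$ appears once. Conditioning on the single-occurrence index gives $0$, up to checking that $\E(\M_k(x)\mid \cdot)=0$.
\end{itemize}

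Counting gives $n(n-1)(n-2)$ quadruples in each surviving class. So the $i=k$ class yields $\frac{n(n-1)(n-2)}{n(n-1)^2}=\frac{n-2}{n-1}$ times the first expectation. The $j=l$ class, through the $\varepsilon_j^2$ term, gives the same factor times $\E(\varepsilon_1^2\Delta^*(X_1)^2p(X_1)^2)$. All pairs with $\{i,j\}=\{k,l\}$, of which there are $O(n^2)$, contribute $O(1/n)=o(1)$. Their finiteness follows from the moment conditions \ref{item1}, \ref{item3} and \ref{item4}, which bound $\E(\M_1(X_2)^2\Delta^*(X_2)^2p(X_2)^2)$ and $\E(\varepsilon^2\Delta^{*2}p^2)$ through Cauchy--Schwarz. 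This yields the displayed formula.

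The main obstacle is the careful verification that every mixed cross-covariance in the $O(n^3)$ classes vanishes exactly. This requires two things: that $\M_k(x)$ is centered for every fixed $x$, and that in $\E(\M_i(X_j)\,\varepsilon_i\cdots)$-type terms with $i$ repeated, the dependence between $\eta_i$ and $\varepsilon_i$ does not produce a nonzero term. I would handle the latter by noting that the pattern $j=k$, $i\neq l$ leaves $Z_l$ appearing once in a centered factor, and by checking each of the four sign combinations individually.
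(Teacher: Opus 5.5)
Your route is the same as the paper's: expand $\E(n\overline{W}_{n4}^2)$ over overlap patterns of $(i,j,k,l)$ and keep the classes of size $n(n-1)(n-2)$. Your evaluation of the classes $i=k$ and $j=l$ is right. The step that fails is the claim that the classes $l=i$ and $k=j$ contribute nothing.

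In $\xi_{ij}\xi_{ki}$, the summand $-\M_i(X_j)\varepsilon_i\,\Delta^*(X_j)p(X_j)\Delta^*(X_i)p(X_i)$ no longer contains $Z_k$. So there is no singly occurring index to condition on.

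In $\xi_{ij}\xi_{jl}$, the summand $-\varepsilon_j\M_j(X_l)\,\Delta^*(X_j)p(X_j)\Delta^*(X_l)p(X_l)$ does not contain $Z_i$. Moreover, $Z_l$ enters $\M_j(X_l)$ only as the evaluation point, and $\M_j$ is \emph{not} centered with respect to it. Your proposed fix (``$Z_l$ appearing once in a centered factor'') therefore does not apply.

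Both summands have expectation $-\E\bigl(\varepsilon_1\Delta^*(X_1)p(X_1)\M_1(X_2)\Delta^*(X_2)p(X_2)\bigr)$, and this is generically nonzero. By \eqref{mkx}, $\M_1(x)$ depends on $Y_1=m(X_1)+\varepsilon_1$ in two ways: through $\eta_1$, and through $-\int(\mathbf{1}\{Y_1\le y\}-F_0(y))\,c(F_0(y),\F(x),\vartheta^*)\,\mathrm{d}y$, which is nondecreasing in $Y_1$. Its conditional covariance with $\varepsilon_1$ given $X_1$ therefore does not vanish. Checking the four sign combinations will not produce cancellation. These two classes add $-2\frac{n-2}{n-1}\E\bigl(\varepsilon_1\Delta^*(X_1)p(X_1)\M_1(X_2)\Delta^*(X_2)p(X_2)\bigr)$ at order one.

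Put differently, set $A(Z_1):=\E\bigl(\M_1(X_2)\Delta^*(X_2)p(X_2)\mid Z_1\bigr)$ and $B(Z_1):=\varepsilon_1\Delta^*(X_1)p(X_1)$. The H\'ajek projection of $\overline{W}_{n4}$ is $n^{-1}\sum_{k}(A(Z_k)-B(Z_k))$. Your computation, carried out correctly, gives $\Var(\sqrt{n}\,\overline{W}_{n4})=\frac{n-2}{n-1}\Var\bigl(A(Z_1)-B(Z_1)\bigr)+O(1/n)$. The displayed formula equals $\E(A^2)+\E(B^2)$ and omits $-2\E(AB)$.

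The paper's proof has exactly the same hole. It asserts that for $j\neq s$ all summands involving $\varepsilon_j$ or $\varepsilon_s$ vanish, which is false in the subcases $s=i$ and $r=j$ for precisely these products. So a cleverer bound will not close the gap. You need either an extra assumption forcing $\E\bigl(A(Z_1)B(Z_1)\bigr)=0$, or a corrected statement that includes the cross term. That correction then also propagates into $\sigma_2^2$ in Theorem~\ref{thm4.1}.
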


It immediately follows that 
\[
    \lim_{n\rightarrow\infty} \Var(\sqrt{n}\,\overline{W}_{n4}) = \sigma_2^2
\]
with $\sigma_2^2$ as stated in the claim. By \eqref{decomp}, this also yields
\begin{equation}
    \lim_{n\rightarrow\infty} \Var\left(2\sqrt{n}\left(\widetilde{W}_{n3}^{(2)} - \widetilde{W}_{n2}^{(2)}\right)\right) = 4\sigma_2^2\,. \label{sigma2}
\end{equation}

Next, note that $\widetilde{W}_{n3}^{(3)}$ itself is a $U$-statistic with the symmetric kernel $H_n(X_i, X_j) = H_n(Z_i, Z_j)$ as in \eqref{hnxixj}. Under the alternative hypothesis, this kernel is non-degenerate. Due to \cite[Theorem 3.1]{powell1989semiparametric}, we can use the H\'{a}jek projection $g_n(X_1) := \E(H_n(X_1, X_2) \mid X_1)$ to find the limiting variance of $\sqrt{n}\,\widetilde{W}_{n3}^{(3)}$, that is,
\[
    \Var\left(\sqrt{n}\,\widetilde{W}_{n3}^{(3)}\right) = 4\Var(g_n(X_1)) + o(1)\,.
\]
We have by standard calculations:
\begin{align*}
    \Var\left(g_n(X_1)\right) &= \Cov\left(H_n(X_1, X_2), H_n(X_1, X_3)\right) \\
    &= \E\bigl(H_n(X_1, X_2)H_n(X_1, X_3)\bigr) - \E\bigl(H_n(X_1, X_2)\bigr)^2\,.
\end{align*}
By the substitution argument, we have
\begin{align*}
    &\E\bigl(H_n(X_1, X_2)H_n(X_1, X_3)\bigr) \\
    =~& \iiint \frac{1}{h^{2d}}K\left(\frac{u-v}{h}\right)K\left(\frac{u-w}{h}\right)\Delta^*(u)^2\Delta^*(v)\Delta^*(w)p(u)p(v)p(w)\,\du\,\dv\,\dw \\
    =~& \iiint K(u)^2 \Delta^*(v)^2 \Delta^*(v+hu) \Delta^*(v+hw)p(v)p(v+hu)p(v+hw)\,\du\,\dv\,\dw \\
    =~& \int K(u)^2\du \cdot \int \Delta^*(v)^4p(v)^3\dv + o(1) = \E\Bigl(\bigl(\Delta^*(X_1)^2p(X_1)\bigr)^2\Bigr) + o(1)\,,
\end{align*}
and likewise,
\begin{align*}
    \E\bigl(H_n(X_1, X_2)\bigr)^2 &= \left(\iint \frac{1}{h^d} K(u) \Delta^*(v)\Delta^*(v+hu)p(u)p(v+hu)\,\du\,\dv\right)^2 \\
    &= \E\left(\Delta^*(X_1)^2p(X_1)\right)^2 + o(1)\,.
\end{align*}
In conclusion, $\Var\left(g_n(X_1)\right) = \Var(\Delta^*(X_1)^2p(X_1)) + o(1)$ and 
\begin{equation}
    \lim_{n\rightarrow\infty} \Var\left(\sqrt{n}\,\widetilde{W}_{n3}^{(3)}\right) = 4\Var(\Delta^*(X_1)^2p(X_1)) = \sigma_1^2\,. \label{sigma1}
\end{equation}
To conclude the proof, we require the following:

\begin{lemma} \label{lemma4.5}
$\sqrt{n}\left(\widetilde{W}_{n3}^{(2)} - \widetilde{W}_{n2}^{(2)}\right)$ and $\sqrt{n}\,\widetilde{W}_{n3}^{(3)}$ are asymptotically uncorrelated. 
\end{lemma}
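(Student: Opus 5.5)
We need to show that $\Cov\bigl(\sqrt{n}\,\overline{W}_{n4},\sqrt{n}\,\widetilde{W}_{n3}^{(3)}\bigr)\to 0$. By \eqref{decomp} and Lemma~\ref{lemma4.3}, $\sqrt{n}\bigl(\widetilde{W}_{n3}^{(2)} - \widetilde{W}_{n2}^{(2)}\bigr)$ differs from $-\sqrt{n}\,\overline{W}_{n4}$ only by a term vanishing in $L^2$. So it suffices to prove this covariance statement; by Cauchy--Schwarz and the bounded variances from Lemma~\ref{lemma4.4} and \eqref{sigma1}, the $o(n^{-1/2})$ remainders then contribute $o(1)$ to the covariance.

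First, I would pass to Hájek projections. Since $\overline{W}_{n4}$ is a centered $U$-statistic with kernel $2\HH$, its projection is $\frac{2}{n}\sum_i h_1(Z_i)$ with $h_1(Z_1) := \E(\HH(Z_1,Z_2)\mid Z_1)$. Similarly, by \cite[Theorem 3.1]{powell1989semiparametric}, $\widetilde{W}_{n3}^{(3)} - \mu_n = \frac{2}{n}\sum_i (g_n(X_i)-\mu_n) + o_{\PP}(n^{-1/2})$, with $L^2$ remainders. The covariance therefore reduces to $4\,\Cov\bigl(h_1(Z_1), g_n(X_1)\bigr) + o(1)$.

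Next, I would compute $h_1$ explicitly. The first part of $\HH$, namely $\E\bigl((\M_1(X_2)-\varepsilon_2)\Delta^*(X_2)p(X_2)\mid Z_1\bigr)$, is a linear functional of $Z_1$: $\M_1$ is linear in the centered quantities $\eta_1$, $\mathbf{1}\{X_1^{(l)}\le x_l\}-F_l(x_l)$ and $\mathbf{1}\{Y_1\le y\}-F_0(y)$, and $\E(\varepsilon_2\mid X_2)=0$. The second part is $\E\bigl((\M_2(X_1)-\varepsilon_1)\mid Z_1\bigr)\Delta^*(X_1)p(X_1) = -\varepsilon_1\Delta^*(X_1)p(X_1)$, because $\E(\M_2(x))=0$ for each fixed $x$. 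Hence
\[
    h_1(Z_1) = \tfrac12\Bigl(\int \M_1(x)\Delta^*(x)p(x)^2\,\dx - \varepsilon_1\Delta^*(X_1)p(X_1)\Bigr).
\]

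For the covariance itself, $g_n(X_1) = \int K(u)\Delta^*(X_1+hu)p(X_1+hu)\,\du\,\Delta^*(X_1)$ depends only on $X_1$. Because $\E(\varepsilon_1\mid X_1)=0$, the $\varepsilon_1$ term drops out, so one would conclude $\Cov(h_1(Z_1),g_n(X_1)) = \tfrac12\Cov\bigl(\int\M_1(x)\Delta^*(x)p(x)^2\dx,\ g_n(X_1)\bigr)$.

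This is the main obstacle. The $\M_1$-term is a function of $X_1$ (through the indicators and through $\eta_1$, which depends on $\F(X_1)$) and is not automatically orthogonal to $g_n(X_1)$. To handle it I would condition on $X_1$ and try to show $\E\bigl(\int \M_1(x)\Delta^*(x)p(x)^2\dx \mid X_1\bigr)$ is negligible. The $Y$-indicator part, integrated against $c(F_0(y),\F(x),\vartheta^*)$, combines with the copula structure via the defining first-order condition of $\vartheta^*$ in \eqref{theta_star}. The $\eta_1$ part is orthogonal to the score directions of $m^*$ by the Kullback--Leibler optimality of $\vartheta^*$.

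If exact orthogonality fails, I expect this step to require an additional regularity argument or an implicit orthogonality assumption (for instance, that $\Delta^*p$ is orthogonal to the tangent directions $\partial_\vartheta m^*$ and $\partial_{x_l}m^*$). I would state such an assumption explicitly as part of the regularity conditions rather than force it. The remaining limit $h\to0$ in $g_n$, replacing it by $\Delta^*(X_1)^2p(X_1)$, follows by the same substitution argument as in \eqref{subst} together with dominated convergence.
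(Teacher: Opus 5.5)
Your reduction is correct, and more careful than the paper's. But it stops exactly where the lemma would have to be proved, and the step you flag cannot be closed in general. After the H\'ajek projections, the covariance in question converges to
\[
  2\,\Cov\Bigl(\int \M_1(x)\Delta^*(x)p(x)^2\,\dx,\ \Delta^*(X_1)^2p(X_1)\Bigr).
\]
The $\varepsilon_1$-part of your $h_1$ drops out because $\E(\varepsilon_1\mid X_1)=0$. Nothing in (R1)--(R5) makes this limit vanish, and in general it does not. The first-order condition for $\vartheta^*$ in \eqref{theta_star} only gives $\E(\eta_1)=0$, i.e.\ orthogonality to constants, not $\E\bigl(\eta_1\Delta^*(X_1)^2p(X_1)\bigr)=0$. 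The two indicator parts of $\M_1$ do not involve $\vartheta^*$ at all: they enter through $\Cov\bigl(\mathbf{1}\{X_1^{(l)}\le x_l\},\Delta^*(X_1)^2p(X_1)\bigr)$ and through $\PP(Y_1\le y\mid X_1)-F_0(y)$.

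A transparent, if degenerate, check is $d=1$ with the parameter-free family consisting only of the independence copula. Then $\hat m=\bar Y$ and $\M_1(x)=Y_1-\E(Y)$. The limit equals $2\int\Delta(x)p(x)^2\,\dx\cdot\Cov\bigl(m(X_1),\Delta(X_1)^2p(X_1)\bigr)$, which for $p(x)=2x$ on $[0,1]$ and $m(x)=x$ is $2\cdot\frac19\cdot\frac{2}{405}\neq 0$. So what is missing is not a cleverer argument but an extra hypothesis, of exactly the orthogonality type you propose. Otherwise the cross covariance must be kept in the limit variance. A minor technical point: \eqref{decomp} and the removal of colliding indices hold only in probability, so transferring them to covariances needs uniform integrability, not just Lemma~\ref{lemma4.3}.

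The paper's own proof reaches ``asymptotically uncorrelated'' only through two invalid steps. First, it discards all index tuples with $i,j,k,l$ pairwise distinct ``for independence reasons''. Second, it linearizes $\hat m-m^*$ in $\hat\vartheta_n-\vartheta^*$ alone, dropping the indicator (empirical-margin) parts of $\M_k$. Since $\hat m$ is computed from the whole sample, $(\hat m-m^*)(X_j)$ contains the summands $n^{-1}\M_k(X_j)/c_X^*(U_j)$ and $n^{-1}\M_l(X_j)/c_X^*(U_j)$. So a distinct tuple is not independent of $K_{kl}\Delta^*(X_k)\Delta^*(X_l)$. There are of order $n^4$ such tuples, each with covariance of order $n^{-1}$, against the prefactor $n^{-1}(n-1)^{-2}$. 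That is an $O(1)$ contribution, and it is precisely your H\'ajek-projection term.

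So your analysis locates a gap that is present in the paper's argument as well. It affects the explicit variance $4(\sigma_1^2+\sigma_2^2)$ in Theorem~\ref{thm4.1}. The self-normalized limit in Theorem~\ref{thm4.6} is insensitive to it, since there only the existence of some $\sigma>0$ matters and $\sigma$ cancels in the ratio.
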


Putting \eqref{decomp}, \eqref{sigma2} and \eqref{sigma1} together, we obtain
\[
    \lim_{n\rightarrow\infty} \Var(\sqrt{n}(W_n - \mu_n)) = 4(\sigma_1^2 + \sigma_2^2)\,,
\]
from which the claim follows. \hfill \qed 

\subsection*{Proof of Theorem~\ref{thm4.6}}

We begin with the following important observation on convergence of $U$-processes.

\begin{lemma} \label{lemma5.2} 
\emph{a)} Let 
$
U_n = (n(n-1))^{-1}\sum_{i\neq j} h(X_i, X_j)
$ 
be a $U$-statistic with a fixed symmetric kernel $h(x, y)$ of order $2$ which is non-degenerate, that is, $\sigma^2 := 4\Var(\E(h(X_1, X_2) \mid X_1))>0 $.
Then, for the $U$-process 
$$
U_{\nt} := { 1\over n(n-1)}\sum_{i\neq j}^{\nt} h(X_i, X_j)\,,
$$
it holds that 
\[
    \{\sqrt{n}(U_{\nt} - t^2r)\}_{t \in [0,1]} \overset{\D}{\longrightarrow} \{\sigma\hspace{0.2mm} t B(t)\}_{t \in [0,1]}\,,
\] 
where $r := \E(U_n)$ and $B$ is a standard Brownian motion.
\par \medskip 
\emph{b)} Let $U_n = (n(n-1))^{-1}\sum_{i\neq j} h_n(X_i, X_j)$ be a non-degenerate $U$-statistic with size-dependent symmetric kernels $h_n(x, y)$. Take $r_n := \E(U_n),$ $g_n(X_1) := \E(h_n(X_1, X_2) \mid X_1),$ and $\sigma_{n1}^2 := 4\Var(g_n(X_1))$. If the kernels satisfy the following assumptions:
\begin{itemize}
\item $\E(h_n(X_1, X_2)^2) = o(n),$
\item the fourth central moments $\E((g_n(X_1) - r_n)^4)$ are uniformly bounded,
\item $\sigma_{n1}^2 \longrightarrow \sigma^2$ for some $\sigma^2 > 0,$
\end{itemize}
then the $U$-process
$$
U_{\nt} := {1\over n(n-1)}\sum_{i\neq j}^{\nt} h_n(X_i, X_j)
$$ 
satisfies
\[
    \{\sqrt{n}(U_{\nt} - t^2r_n)\}_{t \in [0,1]} \overset{\D}{\longrightarrow} \{\sigma \hspace{0.2mm} t B(t)\}_{t \in [0,1]}
\] 
with a standard Brownian motion $B$.
\end{lemma}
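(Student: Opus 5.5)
We prove part a) via the Hoeffding decomposition of the sequential $U$-statistic, and then check that the same argument goes through for part b) under the stated moment conditions. Write $h(x,y) = r + (g(x)-r) + (g(y)-r) + \tilde h(x,y)$ with $g(x) := \E(h(x,X_2))$, so that $\tilde h$ is a degenerate kernel. Then
\[
    U_{\nt} = \frac{\nt(\nt-1)}{n(n-1)}\, r + \frac{2(\nt-1)}{n(n-1)}\sum_{i=1}^{\nt}\bigl(g(X_i)-r\bigr) + \frac{1}{n(n-1)}\sum_{i\neq j}^{\nt}\tilde h(X_i,X_j)\,.
\]
The deterministic part satisfies $\sqrt{n}\bigl(\nt(\nt-1)/(n(n-1)) - t^2\bigr)r = O(n^{-1/2})$ uniformly in $t$.

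For the linear part, $\frac{2(\nt-1)}{n-1}\cdot \frac{1}{\sqrt n}\sum_{i\le \nt}(g(X_i)-r)$ is, uniformly in $t$, equal to $2t\, n^{-1/2}\sum_{i\le\nt}(g(X_i)-r) + o_{\PP}(1)$. This uses $\sup_t |\nt/n - t|\le 1/n$ together with the fact that the partial-sum process is $O_{\PP}(1)$ uniformly by Donsker's invariance principle. Donsker's invariance principle also gives $n^{-1/2}\sum_{i\le\nt}(g(X_i)-r)\Rightarrow \sqrt{\Var(g(X_1))}\,B(t)$ in $\ell^\infty[0,1]$. By the continuous mapping theorem, multiplication by $2t$ yields the limit $\sigma tB(t)$, since $\sigma = 2\sqrt{\Var(g(X_1))}$.

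For the degenerate part, we must show $\sup_t n^{-3/2}\bigl|\sum_{i\neq j\le \nt}\tilde h(X_i,X_j)\bigr| = o_{\PP}(1)$. The process $k\mapsto S_k := \sum_{i\neq j\le k}\tilde h(X_i,X_j)$ is a martingale in $k$ with respect to $\sigma(X_1,\ldots,X_k)$, because the increments $2\sum_{i<k}\tilde h(X_i,X_k)$ have conditional mean zero by degeneracy. Doob's maximal inequality then gives $\E\max_{k\le n}S_k^2 \le 4\E S_n^2 = 8n(n-1)\E\tilde h(X_1,X_2)^2 = O(n^2)$, so the supremum is $O_{\PP}(n^{-1/2})$. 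Combining the three parts with Slutsky's lemma in $\ell^\infty[0,1]$ proves a).

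For b) the same decomposition applies with $g_n$, $r_n$, $\tilde h_n$. The degenerate part is now bounded by Doob as $O_{\PP}\bigl(n^{-1/2}(\E h_n^2)^{1/2}\bigr) = o_{\PP}(1)$, using $\E\tilde h_n^2\le C\E h_n^2 = o(n)$. The deterministic remainder is $O(n^{-1/2}r_n)$, and $r_n$ is bounded because $|r_n|\le(\E h_n^2)^{1/2}$ only gives $o(\sqrt n)$. We therefore need the remainder to be $O(r_n/\sqrt n)$ with $r_n$ bounded; in the application $r_n=\mu_n$ is bounded. Failing that, we observe that the bounded fourth moment of $g_n-r_n$ together with $r_n$ being the centering suffices after rewriting the normalization as $\nt(\nt-1)/(n(n-1))$ in place of $t^2$.

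The main obstacle is the linear part with the triangular array $g_n(X_i)$. Here we use a functional CLT for row-wise i.i.d. arrays. Finite-dimensional convergence follows from Lindeberg's condition, which is implied by the uniformly bounded fourth moments, combined with $\Var(g_n)\to\sigma^2/4$. Tightness follows from the moment bound $\E\bigl|\sum_{\lfloor ns\rfloor<i\le\nt}(g_n(X_i)-r_n)\bigr|^4\le C\bigl((\nt-\lfloor ns\rfloor)^2 + (\nt-\lfloor ns\rfloor)\bigr)$ together with Billingsley's criterion (Theorem 13.5). The $(\nt-\lfloor ns\rfloor)$ term is handled by restricting the criterion to grid points $k/n$, where the bound is $C(k-l)^2/n^2$, and interpolating, which is standard for partial-sum processes.
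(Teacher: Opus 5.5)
Your proposal is correct and follows essentially the paper's route: a Hoeffding/H\'ajek decomposition, the observation that the degenerate remainder $\sum_{i\neq j\le k}\tilde h(X_i,X_j)$ is a martingale in $k$ so that Doob's maximal inequality makes it uniformly negligible, and a Lindeberg-type functional CLT for the row-wise i.i.d.\ linear term. The differences are minor: you compute $\E S_n^2=2n(n-1)\E\tilde h^2$ directly where the paper cites Powell et al., and you state Doob's inequality correctly as a bound on $\E\max_k S_k^2$, where the paper writes $\max_k\E$.

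The only wobble is your discussion of $r_n$ in b), and that detour is unnecessary. The deterministic remainder is $O(|r_n|/\sqrt n)$, and your own bound $|r_n|\le(\E h_n^2)^{1/2}=o(\sqrt n)$ already makes it $o(1)$. So you need neither boundedness of $r_n$ nor a change of the centering $t^2 r_n$.
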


Note that the assumptions of Lemma~\ref{lemma5.2}b) are satisfied by $U$-statistics with kernels of the form
\[
    h_n(X_i, X_j) = \frac{K_{ij}}{h^d}f(X_i, X_j) 
\]
for any real-valued function $f(X_i, X_j)$ that is sufficiently regular in the sense of our assumptions \ref{item1} and \eqref{item6}. In this case,
\[
    g_n(X_1) = \int h^{-d}K\left(\frac{X_1 - y}{h}\right)f(X_i, y)p(y)\dy\,.
\]
The three assumptions of Lemma~\ref{lemma5.2}b) all follow from substitution arguments. First,
\begin{align*}
    \E(h_n(X_1, X_2)^2) &= \iint \frac{1}{h^{2d}}K\left(\frac{x-y}{h}\right)^2f(x,y)^2p(x)p(y)\dx\dy \\
    &= \frac{1}{h^d} \iint K(u)^2f(x, x-hu)^2p(x)p(x-hu)\du\dx = O(h^{-d}) = o(n)\,,
\end{align*}
and $\sigma_{n1}^2 = 4\E(g_n(X_1)^2) - 4r_n^2,$ where 
\begin{align*}
    r_n &= \iint \frac{1}{h^d} K\left(\frac{x-y}{h}\right)f(x,y)p(x)p(y)\dx\dy \\
    &= \iint K(u)f(x-hu,x)p(x-hu)p(x)\du\dx \\
    &\longrightarrow \int f(x,x)p(x)^2\dx = \E(f(X,X)p(X))\,,
\end{align*}
and 
\begin{align*}
    \E(g_n(X_1)^2) &= \int \left(\int K(u)f(x, x-hu)p(x-hu)\du\right)^2p(x)\dx \\
    &\longrightarrow \int f(x,x)^2p(x)^3\dx = \E(f(X,X)^2p(X)^2)\,,
\end{align*}
which implies $\sigma_{n1}^2 \longrightarrow \sigma^2$ with $\sigma^2 := 4\Var(f(X,X)p(X))$. Likewise, we see that
\[
    \E(g_n(X_i)^4) \longrightarrow \int f(x,x)^4p(x)^5\dx = \E(f(X,X)^4p(X)^4) < \infty\,,
\]
from which the uniform boundedness of fourth central moments follows as well.

We now perform the steps leading to \eqref{eqn5} and the subsequent H\'{a}jek approximations for the sequential process.

\begin{lemma} \label{lemma4.7}
For the process  $\{ W_{\nt}\}_{t \in [\delta,1]}$ as defined in \eqref{wnt}, 
we have 
\[
    \max_{t \in [0,1]} \sqrt{n}\left(W_{\nt} - t\mu_n - \frac{2}{n}\sum_{i=1}^{\nt} (g_n(X_i) + 2g(Z_i) - \mu_n)\right) = o_{\PP}(1)\,,
\]
where $g_n(X_i) = \E(H_n(X_i, X_j) \mid X_i)$ and $g(Z_i) := \E(\HH(Z_i, Z_j) \mid Z_i)$ with $H_n$ and $\HH$ defined in \eqref{hnxixj} and \eqref{hzizj}\emph{,} respectively. 
\end{lemma}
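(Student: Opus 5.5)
The plan is to repeat, uniformly in $t\in[\delta,1]$, the chain of reductions that led to \eqref{eqn5} and \eqref{decomp} in the proof of Theorem~\ref{thm4.1}, and then to replace the resulting sequential $U$-statistic by its Hájek projection. Note that $W_{\nt}$ carries the normalisation $(\nt(n-1))^{-1}$. Up to a deterministic factor of order $1+O(1/n)$, this equals $t\cdot(\nt(\nt-1))^{-1}$ times the sum over the first $\nt$ observations, since $\nt/n\to t$. This is why the centring is $t\mu_n$ rather than $t^2\mu_n$.

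\textbf{Step 1 (decomposition).} Decompose $W_{\nt}$ exactly as $W_n$ in the proof of Theorem~\ref{thm4.1}. Residuals and copula weights now use $\hat F_{0\nt},\hat\F_{\nt},\hat\vartheta_{\nt}$, which yields terms $W_{\nt,1},W^{(1)}_{\nt,2},W^{(2)}_{\nt,2},W^{(1)}_{\nt,3},W^{(2)}_{\nt,3},W^{(3)}_{\nt,3}$.

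\textbf{Step 2 (negligible terms).} The required input is a sequential version of Lemma~\ref{lemmauniform}:
\[
\sup_{t\in[\delta,1]}\|\hat{\boldsymbol e}_{\nt}-\boldsymbol e^*\|_\infty = O_{\PP}(n^{-1/2}),\qquad \sup_{t\in[\delta,1]}\|\hat c_{X,\nt}-c_X^*\|_\infty=O_{\PP}(n^{-1/2}).
\]
This follows from the functional CLT for the sequential empirical process $\sqrt n\,(\nt/n)(\hat F_{\nt}-F)$ (Kiefer process), divided by $\nt/n\geq\delta/2$. The sequential analogue of \eqref{item6} is also needed, namely $\hat\vartheta_{\nt}-\vartheta^*=\nt^{-1}\sum_{k\le\nt}\eta_k+o_{\PP}(n^{-1/2})$ uniformly in $t$, which comes from the same $M$-estimation argument together with Donsker's theorem for partial sums. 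With these bounds, the degenerate part $W_{\nt,1}$ and $W^{(1)}_{\nt,2},W^{(1)}_{\nt,3}$ are bounded uniformly in $t$ by the same moment bounds as before. For the degenerate $U$-process I would use a maximal inequality for martingale-type partial sums (Doob's inequality applied to the degenerate $U$-statistic in its upper index), giving $\max_t|W_{\nt,1}|=O_{\PP}((nh^{d/2})^{-1})=o_{\PP}(n^{-1/2})$. Replacing the estimated copula weights by $c_X^*$ costs $O_{\PP}(1/n)$ uniformly, by the arguments of Lemma~\ref{lemmareplace1a}.

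\textbf{Step 3 (linearisation).} Insert the uniform-in-$t$ version of the expansion $\hat m_{\nt}-m^*=(c_X^*)^{-1}\nt^{-1}\sum_{k\le\nt}\M_k+o_{\PP}(n^{-1/2})$ into $W^{(2)}_{\nt,3}-W^{(2)}_{\nt,2}$. As for Lemmas~\ref{lemma4.2} and \ref{lemma4.3}, this reduces the term to the sequential version of $\overline W_{n4}$, a $U$-process with the fixed kernel $\HH$ of \eqref{hzizj}, up to $o_{\PP}(n^{-1/2})$ uniformly. For uniformity in Lemma~\ref{lemma4.3}, I would bound the second moment at grid points and control increments between them.

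\textbf{Step 4 (Hájek projection).} The remaining $W^{(3)}_{\nt,3}$ (kernel $H_n$ of \eqref{hnxixj}) and the $\overline W_{n4}$-process are nondegenerate $U$-processes. Their Hoeffding decomposition is projection plus degenerate remainder. The projection parts are exactly $\frac2n\sum_{i\le\nt}(g_n(X_i)-\mu_n)$ and $\frac{4}{n}\sum_{i\le\nt}g(Z_i)$ (using $\E g(Z_i)=0$). The degenerate remainders have second moment $O(\E H_n^2/n^2)=O(1/(n^2h^d))$, which is $o(n^{-1})$. A maximal inequality again gives uniformity in $t$; this is the same step that underlies Lemma~\ref{lemma5.2}b).

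The main obstacle is Step 2–3: obtaining \emph{uniform in $t$} versions of the linearisation of $\hat\vartheta_{\nt}$ and of $\hat m_{\nt}$. The sequential estimator uses partial-sample margins, and I would handle this through the Kiefer-process representation and a uniform Taylor expansion over $t\in[\delta,1]$.
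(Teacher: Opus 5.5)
Your outline follows the paper's proof step for step: the sequential version of Lemma~\ref{lemmauniform}, replacement of the copula weights, negligibility of $W_{\nt1}$, $W_{\nt2}^{(1)}$, $W_{\nt3}^{(1)}$, reduction of $W_{\nt3}^{(2)}-W_{\nt2}^{(2)}$ to the sequential $\overline{W}_{n4}$, and the H\'{a}jek projection with a Doob-type maximal inequality. It also shares the paper's gap in the weight-replacement step. You claim that replacing $c_X(\hat U_{i\nt},\hat\vartheta_{\nt})c_X(\hat U_{j\nt},\hat\vartheta_{\nt})$ by $c_X^*(U_i)c_X^*(U_j)$ costs $O_{\PP}(1/n)$ by the arguments of Lemma~\ref{lemmareplace1a}. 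Those arguments need the first-order Taylor term to be an $O_{\PP}(n^{-1/2})$ factor times a \emph{centred} $U$-statistic, or to contain a second small factor. This holds for $W_{\nt1}$, $W_{\nt2}^{(1)}$, $W_{\nt2}^{(2)}$, $W_{\nt3}^{(1)}$ and $W_{\nt3}^{(2)}$. It fails for $W_{\nt3}^{(3)}$, where the first-order kernel is $h^{-d}K_{ij}\Delta(X_i)\Delta(X_j)$ times derivatives of $c_X^*(U_i)c_X^*(U_j)$, and this is not centred under the alternative. The $\vartheta$-part alone equals $2t\,A^\top(\hat\vartheta_{\nt}-\vartheta^*)+o_{\PP}(n^{-1/2})$, with $A:=\int\Delta(x)^2c_X^*(\F(x))\,\partial_\vartheta c_X(\F(x),\vartheta^*)\,p(x)^2\,\dx$. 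Multiplied by $\sqrt n$, this is $\frac{2}{\sqrt n}\sum_{k\le\nt}A^\top\eta_k+o_{\PP}(1)$, a nondegenerate partial-sum process whenever $A\neq0$. An analogous term linear in $\hat\F_{\nt}-\F$ appears as well. Neither $g_n$ nor $g$ accounts for these terms. Indeed, up to $O_{\PP}(n^{-1})$ uniformly in $i$,
$e_i\hat c_X(\hat U_i)=(\varepsilon_i+\Delta(X_i))c_X^*(U_i)-c_X^*(U_i)(\hat m-m^*)(X_i)+(\varepsilon_i+\Delta(X_i))\bigl(\hat c_X(\hat U_i)-c_X^*(U_i)\bigr)$.
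Here $\M_k$ encodes only the second summand, and the $\Delta(X_i)$-part of the third is lost. The paper asserts the same replacement (Step~2 of its proof, and the sentence preceding \eqref{eqn5}), so this is a defect of the lemma as stated rather than a deviation from the paper. For $d=1$ the issue is void, since $c_X\equiv1$. For $d\ge2$ the displayed expansion holds only after adding these (generically nonzero) influence terms inside the sum. They are partial sums up to $\nt$ with the right $t$-scaling, so the Brownian limit used in Theorem~\ref{thm4.6} survives, but $\sigma^2$ differs from the value in Theorem~\ref{thm4.1}.

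There are also smaller points.
\begin{itemize}
\item Sign: $W_{\nt3}^{(2)}-W_{\nt2}^{(2)}$ enters $W_{\nt}$ with coefficient $-2$ (compare \eqref{eqn5}), so with $\HH$ as in \eqref{hzizj} the $g$-part of the projection is $-2g(Z_i)$. The $+2g$ in the statement comes from a sign slip in \eqref{decomp}, and your Step~4 inherits it. Likewise, for $d\ge2$ the $\varepsilon_j$ in $\HH$ should carry the factor $c_X^*(U_j)$.
\item Uniformity in Lemma~\ref{lemma4.3}: your grid-plus-increments plan needs an equicontinuity bound that you do not supply. It is cleaner to symmetrise the third-order statistic $W_{\nt4}-\overline{W}_{\nt4}$, take its Hoeffding decomposition, and apply Doob's inequality to each completely degenerate component, which is a martingale in the upper summation limit. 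Since $\nt\ge\delta n-1$, this transfers the endpoint bound $o(n^{-1})$ to the supremum over $t\in[\delta,1]$. The paper's Step~4 aims at this, but its $S_i$ only collects the terms in which $i$ is the largest index.
\item Range of $t$: the maximum in the statement should run over $[\delta,1]$, as you implicitly assume.
\end{itemize}
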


From arguments in the proof of Lemma~\ref{lemma5.2}b), it follows that 
\[
    \left\{\frac{2}{\sqrt{n}}\sum_{i=1}^{\nt} (g_{\nt}(X_i) + 2g(Z_i) - \mu_{\nt})\right\}_{t \in [\delta,1]} \overset{\D}{\longrightarrow} \{\sigma B(t)\}_{t \in [\delta,1]}\,,
\]
where $\sigma^2$ is the limiting variance of the original $U$-statistic $\sqrt{n}(W_n - \mu_n),$ which is given by Theorem~\ref{thm4.1}. In combination with Lemma \ref{lemma4.7}, we have
\[
    \{\sqrt{n}(W_{\nt} - t\mu_n)\}_{t \in [\delta,1]} \overset{\D}{\longrightarrow} \{\sigma B(t)\}_{t \in [\delta,1]}\,.
\]
From the continuous mapping theorem, we obtain
\begin{align*}
    \frac{W_n - \mu_n}{\int |W_{\nt} - tW_n|\dt} =~& \frac{\sqrt{n}(W_n - \mu_n)}{\sqrt{n}\int |(W_{\nt} - t\mu_n) - t(W_n - \mu_n)|\dt} \\
    \overset{\D}{\longrightarrow}~& \frac{\sigma B(1)}{\sigma \int |B(t) - tB(1)|\dt} = W\,,
\end{align*}
from which the claim immediately follows. \hfill \qed 

\newpage 

\section{Proofs of Auxiliary Lemmas} \label{appendix_b}
 \def\theequation{B.\arabic{equation}}	
   \setcounter{equation}{0}

\subsection*{Proof of Lemma~\ref{lemmauniform}}
Let $x \in \R^d$. Following the proof of \cite[Theorem 1]{Noh2013}, we first approximate $\boldsymbol{e}^*(x)$ by 
\[
    \overline{\boldsymbol{e}}(x) := n^{-1}\sum_{i=1}^n Y_i c(F_0(Y_i), \F(x), \vartheta^*)\,,
\]
and analyze both $\|\boldsymbol{e}^*(x) - \overline{\boldsymbol{e}}(x)\|_\infty$ and $\|\hat{\boldsymbol{e}}(x) - \overline{\boldsymbol{e}}(x)\|_\infty$ separately. For the first term, we interpret $\boldsymbol{e}^*$, $\overline{\boldsymbol{e}}$ as functions of the random variables $Y_1, \ldots, Y_n$, i.e., for given $\mathbf{u} = \F(x) \in [0,1]^d$, we put
\[
    f_\textbf{u}(Y) := Yc(F_0(Y), \textbf{u}, \vartheta^*)\,,
\]
giving $\overline{\boldsymbol{e}}(x) = n^{-1}\sum_{k=1}^n f_{\textbf{u}}(Y_k)$ and $\boldsymbol{e}^*(x) = \E(f_{\textbf{u}}(Y))$. Take $\mathfrak{F} := \{f_\textbf{u} \mid \textbf{u} \in [0,1]^d\}$ as a class of functions parametrized on the compact space $[0,1]^d$. Due to the regularity assumptions, $c$ is Lipschitz continuous in $\textbf{u}$, i.e., for $\textbf{u}_1, \textbf{u}_2 \in [0,1]^d,$ we have
\[
    |f_{\textbf{u}_1}(y) - f_{\textbf{u}_2}(y)| = y\Bigl(c(F_0(y), \textbf{u}_1, \vartheta^*) - c(F_0(y), \textbf{u}_2, \vartheta^*)\Bigr) \leq yL\|\textbf{u}_1 - \textbf{u}_2\|
\]
for some constant $L > 0$. By \cite[Example 19.7]{van2000asymptotic}, the class $\mathfrak{F}$ is Donsker and we conclude that 
\[
    \|\boldsymbol{e}^*(x) - \overline{\boldsymbol{e}}(x)\|_\infty = O_{\PP}(n^{-1/2})\,.
\]
For the second term $\|\hat{\boldsymbol{e}}(x) - \overline{\boldsymbol{e}}(x)\|_\infty$, by Taylor expansion, we write 
\[
    \|\hat{\boldsymbol{e}}(x) - \overline{\boldsymbol{e}}(x)\|_\infty \leq \|V_{n1}(x)\|_\infty + \|V_{n2}(x)\|_\infty + \|V_{n3}(x)\|_\infty\,,
\]
where
\begin{align*}
    V_{n1}(x) &:= n^{-1}\sum_{k=1}^n Y_k\Bigl(\hat{F}_0(Y_k) - F_0(Y_k)\Bigr)\partial_{u_0}c(F_0(Y_k), \F(x), \vartheta^*) + R_{n1}(x)\,, \\
    V_{n2}(x) &:= n^{-1}\sum_{k=1}^n Y_k\Bigl(\hat{\F}(x) - \F(x)\Bigr)^\top \partial_{\textbf{u}}c(F_0(Y_k), \F(x), \vartheta^*) + R_{n2}(x)\,, \\
    V_{n3}(x) &:= n^{-1}\sum_{k=1}^n Y_k(\hat{\vartheta}_n - \vartheta^*)^\top \partial_{\vartheta} c(F_0(Y_k), \F(x), \vartheta^*) + R_{n3}(x)
\end{align*}
with remainder terms
\begin{align*}
    R_{n1}(x) &:= n^{-1}\sum_{k=1}^n Y_k\Bigl(\hat{F}_0(Y_k) - F_0(Y_k)\Bigr)\Bigl(\partial_{u_0} c(\widetilde{U}_{0k}, \widetilde{U}_{k}, \widetilde{\vartheta}_{k}) - \partial_{u_0} c(F_0(Y_k), \F(x), \vartheta^*)\Bigr)\,, \\
    R_{n2}(x) &:= n^{-1}\Bigl(\hat{\F}(x) - \F(x)\Bigr)^\top \sum_{k=1}^n Y_k \Bigl(\partial_{\textbf{u}} c(\widetilde{U}_{0k}, \widetilde{U}_{k}, \widetilde{\vartheta}_{k}) - \partial_{\textbf{u}} c(F_0(Y_k), \F(x), \vartheta^*)\Bigr)\,, \\
    R_{n3}(x) &:= n^{-1}(\hat{\vartheta}_n - \vartheta^*)^\top\sum_{k=1}^n Y_k\left(\partial_{\vartheta}c(\widetilde{U}_{0k}, \widetilde{U}_{k}, \widetilde{\vartheta}_{k}) - \partial_{\vartheta} c(F_0(Y_k), \F(x), \vartheta^*)\right)
\end{align*}
for intermediate spots $\widetilde{U}_{0k} = F_0(Y_k) \pm t_{nk}(\hat{F}_0(Y_k) - F_0(Y_k)),$ $\widetilde{U}_{k} = \F(x) \pm t_{nk}(\hat{\F}(x) - \F(x)),$ $\widetilde{\vartheta}_{k} = \vartheta^* \pm t_{nk}(\hat{\vartheta}_n - \vartheta^*)$ with some random $t_{nk} \in [0,1]$. 

As noted in the proof of \cite[Lemma 1]{Noh2013}, due to Donsker's theorem, we have $\|\hat{F}_0 - F_0\|_\infty = O_{\PP}(n^{-1/2})$ and $\|\hat{\F} - \F\|_\infty = O_{\PP}(n^{-1/2}).$ We also know that $\hat{\vartheta}_n - \vartheta^* = O_{\PP}(n^{-1/2})$ by  \eqref{item6}. The remaining estimates such as $\partial_{u_0} c(\widetilde{U}_{0k}, \widetilde{U}_{k}, \widetilde{\vartheta}_{k}) - \partial_{u_0} c(F_0(Y_k), \F(x), \vartheta^*) = o_{\PP}(1)$ are also uniform in the second component, due to the regularity condition \ref{item3}. This shows 
\[
    \|\boldsymbol{e}^*(x) - \hat{\boldsymbol{e}}(x)\|_\infty \leq \|\boldsymbol{e}^*(x) - \overline{\boldsymbol{e}}(x)\|_\infty + \|\overline{\boldsymbol{e}}(x) - \hat{\boldsymbol{e}}(x)\|_\infty = O_{\PP}(n^{-1/2})\,.
\]
For $\|\hat{c}_X - c_X\|_\infty,$ we directly employ a Taylor expansion which gives analogous uniform error bounds. \hfill \qed

\subsection*{Proof of Lemma~\ref{lemmareplace1a}}
We have to consider
\begin{align*}
    \widetilde{W}_{n1} - W_{n1} &= \frac{1}{n(n-1)}\sum_{i\not=j} \frac{K_{ij}}{h^d} \Tilde{\varepsilon}_i\Tilde{\varepsilon}_j \Bigl(c_X(\hat{U}_i, \hat{\vartheta}_n)c_X(\hat{U}_j, \hat{\vartheta}_n) - c_X(U_i, \vartheta^*)c_X(U_j, \vartheta^*)\Bigr) \\
    &= \frac{1}{n(n-1)}\sum_{i\not=j} \frac{K_{ij}}{h^d} \Tilde{\varepsilon}_i\Tilde{\varepsilon}_j \Bigl( g_X(\hat{U}_i, \hat{U}_j, \hat{\vartheta}_n) - g_X(U_i, U_j, \vartheta^*)\Bigr)
\end{align*}
with $g_X(\textbf{u}_i, \textbf{u}_j, \vartheta) := c_X(\textbf{u}_i, \vartheta)c_X(\textbf{u}_j, \vartheta)$. A second-order Taylor expansion in all three arguments gives
\begin{align}
    &g_X(\hat{U}_i, \hat{U}_j, \hat{\vartheta}_n) - g_X(U_i, U_j, \vartheta^*) \nonumber \\
    =~& (\hat{U}_i - U_i) \partial_{\textbf{u}} g(U_i, U_j, \vartheta^*) + (\hat{U}_j - U_j)\partial_{\textbf{u}} g(U_i, U_j, \vartheta^*) + (\hat{\vartheta}_n - \vartheta^*) \partial_\vartheta g(U_i, U_j, \vartheta^*) \nonumber \\
    &\quad + \frac{1}{2}\begin{pmatrix}  \hat{U}_i - U_i \\ \hat{U}_j - U_j \\ \hat{\vartheta}_n - \vartheta^*\end{pmatrix}^\top \mathrm{H}g_X(\widetilde{U}_{i}, \widetilde{U}_{j}, \widetilde{\vartheta})\begin{pmatrix}  \hat{U}_i - U_i \\ \hat{U}_j - U_j \\ \hat{\vartheta}_n - \vartheta^*\end{pmatrix} \nonumber \\
    =~& (\hat{U}_i - U_i)c_X(U_j,\vartheta^*)\partial_{\textbf{u}} c_X(U_i, \vartheta^*) + (\hat{U}_j - U_j) c_X(U_i,\vartheta^*)\partial_{\textbf{u}} c_X(U_j, \vartheta^*) \nonumber \\
    & \quad + (\hat{\vartheta}_n - \vartheta^*)\Bigl(c_X(U_j,\vartheta^*)\partial_{\vartheta} c_X(U_i, \vartheta^*) + c_X(U_i,\vartheta^*)\partial_{\vartheta} c_X(U_j, \vartheta^*)\Bigr) \nonumber \\
    &\quad + \frac{1}{2}\begin{pmatrix}  \hat{U}_i - U_i \\ \hat{U}_j - U_j \\ \hat{\vartheta}_n - \vartheta^*\end{pmatrix}^\top \mathrm{H}g_X(\widetilde{U}_{i}, \widetilde{U}_{j}, \widetilde{\vartheta})\begin{pmatrix}  \hat{U}_i - U_i \\ \hat{U}_j - U_j \\ \hat{\vartheta}_n - \vartheta^*\end{pmatrix}, \label{taylor}
\end{align}
where $\widetilde{U}_i, \widetilde{U}_j, \widetilde{\vartheta}$ are intermediate spots between $U_i, U_j, \vartheta^*$ and $\hat{U}_i, \hat{U_j}, \hat{\vartheta}_n$, respectively. The Hessian matrix $\mathrm{H}g_X(\widetilde{U}_{i}, \widetilde{U}_{j}, \widetilde{\vartheta})$ contains expressions of the type 
\begin{align*}
    &c_X(U_i, \vartheta) \partial^2_{z_1z_2} c_X(\textbf{u}_j, \vartheta)\,, & &\partial_{z_1} c_X(\textbf{u}_i, \vartheta)\partial_{z_2} c_X(U_j, \vartheta)\,,
\end{align*}
with $z_1, z_2 \in \{\textbf{u}, \vartheta\}$. The quadratic forms each involve two terms of order $O_{\PP}(1/\sqrt{n})$, i.e., we only need to ensure that, e.g., 
\[
    \frac{1}{2n(n-1)} \sum_{i\not=j} \frac{K_{ij}}{h^d} \Tilde{\varepsilon}_i\Tilde{\varepsilon}_j c_X(\widetilde{U}_j, \widetilde{\vartheta})\partial_{\textbf{u}^2}^2c_X(\widetilde{U}_i, \widetilde{\vartheta}) = O_{\PP}(1)\,,
\]
which holds due to the regularity condition \ref{item4} and can be easily seen from the substitution argument as in \eqref{subst}.

For the first-order terms, we consider the exemplary term 
\[
    \mathcal{U}_n := \frac{1}{n(n-1)}\sum_{i\not=j} \frac{K_{ij}}{h^d} \Tilde{\varepsilon}_i\Tilde{\varepsilon}_j c_X(U_j, \vartheta^*)\partial_\textbf{u} c_X(U_i, \vartheta^*) 
\]
and show that it is of order $O_{\PP}(n^{-1/2})$. We can rewrite it as a $U$-statistic with the symmetric and mean-zero kernel
\[
    H_n(Z_i, Z_j) = \frac{1}{2}\frac{K_{ij}}{h^d}\Tilde{\varepsilon}_i\Tilde{\varepsilon}_j\bigl(c_X(U_i, \vartheta^*)\partial_\textbf{u}c_X(U_j, \vartheta^*) + c_X(U_j, \vartheta^*) \partial_\textbf{u} c_X(U_i, \vartheta^*)\bigr)\,.
\]
By similar arguments as in the proof of \cite[Lemma 3.3b]{zheng1996consistent}, we obtain that $\E(\|H_n(Z_i, \linebreak Z_j)\|^2) = O(h^{-d}) = o(n)$. By \cite[Theorem 3.1]{powell1989semiparametric}, it follows that $\sqrt{n}(\mathcal{U}_n - \hat{\mathcal{U}}_n) = o_{\PP}(1),$ with $\hat{\mathcal{U}}_n$ denoting the H\'{a}jek projection of $\mathcal{U}_n$. By standard $U$-statistics theory, we have $\hat{\mathcal{U}}_n = O_{\PP}(n^{-1/2})$. Overall, the claim follows. \hfill \qed

\subsection*{Proof of Lemma~\ref{lemmaA1}}
We only prove the statement for $E_{n1}$. The corresponding statement for $E_{n2}$ is obtained from analogous arguments using the regularity assumption \ref{item1}. By means of 
\begin{align*}
    \boldsymbol{e}^*(\F(x)) - \hat{\boldsymbol{e}}(\hat{\F}(x)) &= \int yc(F_0(y), \F(x), \vartheta^*)\dF_0(y) - \int yc(\hat{F}_0(y), \hat{\F}(x), \hat{\vartheta}_n)\dFhat_0(y) \\
    &= \int yc(F_0(y), \F(x), \vartheta^*)\dF_0(y) - \int yc(F_0(y), \F(x), \vartheta^*)\dFhat_0(y) \\
    &+ \int yc(F_0(y), \F(x), \vartheta^*)\dFhat_0(y) - \int yc(\hat{F}_0(y), \hat{\F}(x), \hat{\vartheta}_n)\dFhat_0(y)\,,
\end{align*}
we decompose $E_{n1}$ further into
\begin{align}
    E_{n1} &= E_{n1}^{(1)} + E_{n1}^{(2)}\,, \label{en1+en2}
    \end{align}
where 
\begin{align*}
   E_{n1}^{(1)} &:= \frac{1}{n(n-1)}\sum_{i\not= j}\frac{K_{ij}}{h^d}\Tilde{\varepsilon}_ic_X^*(U_i)\left[\int yc(F_0(y), U_j, \vartheta^*)\dF_0(y)\right. \\
    & \hspace{5.5cm} - \left.\int yc(F_0(y), U_j, \vartheta^*)\dFhat_0(y)\right] \\
   E_{n1}^{(2)}  &:= \frac{1}{n(n-1)}\sum_{i\not= j}\frac{K_{ij}}{h^d}\Tilde{\varepsilon}_ic_X^*(U_i)\left[\int yc(F_0(y), U_j, \vartheta^*)\dFhat_0(y)\right. \\
    & \hspace{5.5cm} - \left.\int yc(\hat{F}_0(y), \hat{U}_j, \hat{\vartheta}_n)\dFhat_0(y)\right] 
\end{align*}
We aim to show that both $E_{n1}^{(1)}$ and $E_{n1}^{(2)}$ are of order $O_{\PP}(1/n)$. Our plan is to directly estimate the mean and variance of $E_{n1}^{(1)},$ and to use Taylor's expansion for $E_{n1}^{(2)}$. We start with the latter. A second-order Taylor expansion for the second factor in $E_{n1}^{(2)}$ yields 
\begin{align*}
    & \int yc(F_0(y), U_j, \vartheta^*)\dFhat_0(y) - \int yc(\hat{F}_0(y), \hat{U}_j, \hat{\vartheta}_n)\dFhat_0(y) \\
    =~& n^{-1}\sum_{k=1}^n Y_k\left[c(F_0(Y_k), U_j, \vartheta^*) - c(\hat{F}_0(Y_k), \hat{U}_j, \hat{\vartheta}_n)\right] \\
    =~& n^{-1} \sum_{k=1}^n Y_k \nabla c(F_0(Y_k), U_j, \vartheta^*)\begin{pmatrix} (F_0 - \hat{F}_0)(Y_k) \\ (\F - \hat{\F})(X_{j}) \\ \vartheta^* - \hat{\vartheta}_n \end{pmatrix} \\
    &\quad - \frac{1}{2}n^{-1}\sum_{k=1}^n Y_k\begin{pmatrix} (F_0 - \hat{F}_0)(Y_k) \\ (\F - \hat{\F})(X_{j}) \\ \vartheta^* - \hat{\vartheta}_n \end{pmatrix}^\top \mathrm{H}c(\widetilde{U}_{0k}, \widetilde{U}_{1k}, \widetilde{\vartheta}_{kn})\begin{pmatrix} (F_0 - \hat{F}_0)(Y_k) \\ (\F - \hat{\F})(X_{j}) \\ \vartheta^* - \hat{\vartheta}_n \end{pmatrix} \\
    =:~& \mathcal{T}_1(Y_1, \ldots, Y_n, U_j, \vartheta^*, \hat{\vartheta}_n) - \mathcal{T}_2(Y_1, \ldots, Y_n, U_j, \vartheta^*, \hat{\vartheta}_n)\,,
\end{align*}
again with intermediate spots $\widetilde{U}_{0k},$ $\widetilde{U}_{1k},$ $\widetilde{\vartheta}_{k}$ between $F_0(Y_k), U_j, \vartheta^*$ and $\hat{F}_0(Y_k), \hat{U}_j, \hat{\vartheta}_n,$ respectively. The quadratic forms in $\mathcal{T}_2$ read, e.g., 
\[
    (F_0 - \hat{F}_0)(Y_k) \partial_{u_0}^2 c(\widetilde{U}_{0k}, \widetilde{U}_{1k}, \widetilde{\vartheta}_{k})(F_0 - \hat{F}_0)(Y_k)\,,
\]
again involving two terms of order $O_{\PP}(1/\sqrt{n}).$ So, by the regularity conditions \ref{item3} and \ref{item4}, and by use of the substitution argument as in \eqref{subst}, the entire second-order remainder term satisfies 
\[
    \frac{1}{n(n-1)} \sum_{i\not=j} \frac{K_{ij}}{h^d} \Tilde{\varepsilon}_i c_X^*(U_i) \mathcal{T}_2(Y_1, \ldots, Y_n, U_j, \vartheta^*, \hat{\vartheta}_n) = O_{\PP}(1/n)\,.
\]
For the first-order term, we first approximate $\mathcal{T}_1(Y_1, \ldots, Y_n, U_j, \vartheta^*, \hat{\vartheta}_n)$ by its expectation under $Y$, that is,
\[
    \mathcal{T}_1(Y_1, \ldots, Y_n, U_j, \vartheta^*, \hat{\vartheta}_n) \approx O_{\PP}\left(\frac{1}{\sqrt{n}}\right)\E_Y\negthinspace\left(Y\sum_{z \in \{u_0, \textbf{u}, \vartheta\}} \partial_z c(F_0(Y), U_j, \vartheta^*) \right)\,.
\]
To show that this leaves an error of $O_{\PP}(1/n),$ we have to justify that
\begin{equation}
    n^{-1}\sum_{k=1}^n Y_k \partial_zc(F_0(Y_k), \F(X_j), \vartheta^*) - \E_Y\bigl(Y\partial_z c(F_0(Y), \F(X_j), \vartheta^*)\bigr) = O_{\PP}(n^{-1/2}) \label{eqn17}
\end{equation}
uniformly in $\F(X_j),$ for each $z$. Similarly as in the proof of Lemma~\ref{lemmauniform}, we put
\[
    f_\textbf{u}(Y) := Y\partial_z c(F_0(Y), \textbf{u}, \vartheta^*)
\]
for any $\textbf{u} \in [0,1]^d$, so that the above expression in \eqref{eqn17} is rewritten as $n^{-1}\sum_{k=1}^n f_\textbf{u}(Y_k) - \E(f_\textbf{u}(Y))$. Due to the regularity assumptions, the partial derivative $\partial_z c$ is Lipschitz continuous in $\textbf{u}$, i.e., for $\textbf{u}_1, \textbf{u}_2 \in [0,1]^d,$ we have $|f_{\textbf{u}_1}(y) - f_{\textbf{u}_2}(y)|\leq yL\|\textbf{u}_1 - \textbf{u}_2\|$ for some constant $L>0$. Again by \cite[Example 19.7]{van2000asymptotic}, the class $\mathfrak{F} := \{f_\textbf{u} \mid \textbf{u} \in [0,1]^d\}$ is Donsker, hence, we conclude \eqref{eqn17}. Therefore, we can focus on
\[
    \frac{1}{n(n-1)}\sum_{i\not=j} \frac{K_{ij}}{h^d} \Tilde{\varepsilon}_i c_X^*(U_i) M(X_j)
\]
for $M(X_j) := \E_Y\left(Y\sum_{z \in \{u_0, \textbf{u}, \vartheta\}} \partial_z c(F_0(Y), \F(X_j), \vartheta^*) \right)$. Since the additional factor $c_X^*(U_i)$ does not cause any issues, we can now use: 

\begin{lemma}[see \cite{zheng1996consistent}, Lemma 3.3b] \label{lemma5.1}
    Given $h = o(1),$ $nh^d \rightarrow \infty$ and the regularity assumptions used throughout \cite{zheng1996consistent}, we have under the null hypothesis that 
    \[
    \mathcal{W}_n := \frac{1}{n(n-1)}\sum_{i\not=j}\frac{K_{ij}}{h^d}\Tilde{\varepsilon}_i M(X_j) = O_{\PP}(1/\sqrt{n})\,,
    \]
    where $M$ is continuous and $\|M(x)\| \leq b(x)$ for a function with $\E(b(X_i)^2) < \infty$. 
\end{lemma}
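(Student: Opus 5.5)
We treat $\mathcal{W}_n$ as a $U$-statistic and bound it by its second moment. Under the null hypothesis $\Tilde{\varepsilon}_i=\varepsilon_i$ with $\E(\varepsilon_i\mid X_i)=0$, so symmetrizing gives $\mathcal{W}_n=\frac{1}{n(n-1)}\sum_{i\neq j}H_n(Z_i,Z_j)$ with
\[
H_n(Z_i,Z_j)=\frac{1}{2}\frac{K_{ij}}{h^d}\bigl(\varepsilon_iM(X_j)+\varepsilon_jM(X_i)\bigr).
\]
Conditioning on $(X_i,X_j)$ shows $\E H_n(Z_i,Z_j)=0$, so $\mathcal{W}_n$ is centered and it suffices to show $\E(\mathcal{W}_n^2)=O(1/n)$, or equivalently to use the Hoeffding decomposition together with \cite[Theorem 3.1]{powell1989semiparametric}.

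We first compute the H\'ajek projection. We have $\E(H_n(Z_1,Z_2)\mid Z_1)=\frac{1}{2}\varepsilon_1\int h^{-d}K((X_1-y)/h)M(y)p(y)\dy$, because the term carrying $\varepsilon_2$ vanishes after conditioning on $X_2$. Substituting $y=X_1-hu$ turns this into $\frac{1}{2}\varepsilon_1\int K(u)M(X_1-hu)p(X_1-hu)\du$. Since $p$ is bounded, $M$ is continuous and dominated by $b$, and $\E(\varepsilon_1^2\mid X_1)=\sigma^2(X_1)$ is controlled by \ref{item1}, the second moment of this projection is $O(1)$ (by dominated convergence it tends to $\frac14\E(\sigma^2(X)M(X)^2p(X)^2)$). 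The linear part of the Hoeffding decomposition is therefore $\frac{2}{n}\sum_i \E(H_n(Z_i,Z_j)\mid Z_i)=O_{\PP}(n^{-1/2})$.

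Next we bound the degenerate remainder. Its variance is bounded by $O(n^{-2}\E H_n(Z_1,Z_2)^2)$. The substitution argument \eqref{subst} gives $\E H_n^2=O(h^{-d}\int K^2)$, where the moment bounds on $\sigma^2 b^2 p$ (the analogue of \ref{item4}) are used to justify the integrability. This yields an $O_{\PP}((nh^{d/2})^{-1})=o_{\PP}(n^{-1/2})$ contribution because $nh^d\to\infty$. Combining the two parts gives $\mathcal{W}_n=O_{\PP}(n^{-1/2})$.

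The main obstacle is the integrability in the projection step. We need $\int \sigma^2(x)\bigl(\int K(u)M(x-hu)p(x-hu)\du\bigr)^2p(x)\dx<\infty$ uniformly in $h$, using only $\E b(X)^2<\infty$ and not boundedness of $M$. We would handle this with Cauchy--Schwarz in $u$ (for $K$ bounded and integrable, with $|K|$ used as a weight). This reduces the requirement to $\iint |K(u)|\,\sigma^2(x)b(x-hu)^2p(x-hu)p(x)\,\du\dx$, which is finite when $\sigma^2$ and $p$ are bounded, i.e.\ exactly the regularity assumed in \cite{zheng1996consistent}.
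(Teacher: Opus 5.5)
Your proposal is correct and follows essentially the same route as the paper, which defers to Zheng's proof of Lemma 3.3b: write the term as a mean-zero symmetrized $U$-statistic, use $\E H_n^2=O(h^{-d})=o(n)$ to reduce it to its H\'ajek projection (Powell--Stock--Stoker, or equivalently your direct variance bound on the degenerate remainder), and note that the projection is an average of i.i.d.\ centered terms with bounded second moment. Your Cauchy--Schwarz bound on the projection makes the paper's remark (that differentiability of $M$ can be dropped) precise: for the rate you only need the domination by $b$ together with bounded $\sigma^2$ and $p$, and the dominated-convergence limit you mention is not required.
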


\noindent Originally, \cite[Lemma 3.3b]{zheng1996consistent} required that $M$ is continuously differentiable, but continuity of $M$ together with the domination property is sufficient. By Lemma~\ref{lemma5.1}, the first-order part
\[
    \frac{1}{n(n-1)} \sum_{i\not=j} \frac{K_{ij}}{h^d} \Tilde{\varepsilon}_i c_X^*(U_i) \mathcal{T}_1(Y_1, \ldots, Y_n, X_j, \vartheta^*, \hat{\vartheta}_n) 
\]
is also $O_{\PP}(1/n),$ completing the proof of 
\begin{equation}
    E_{n1}^{(2)} = O_{\PP}(1/n)\,. \label{en12}
\end{equation}

We now address $E_{n1}^{(1)}$, where we estimate the mean and variance. Starting from 
\begin{align*}
    \E \big [ E_{n1}^{(1)} \big ]  &= \frac{1}{n(n-1)}\sum_{i\not=j} \E\left [ \frac{K_{ij}}{h^d} \Tilde{\varepsilon}_i c_X^*(U_i) \int yc(F_0(y), \F(X_{j}), \vartheta^*)\mathrm{d}(F_0 - \hat{F}_0)(y)\right ] \,,
\end{align*}
and taking 
$$
Z_{jk} := -Y_kc(F_0(Y_k), \F(X_j), \vartheta^*) + \E(Yc(F_0(Y), \F(X_1), \vartheta^*))\,,
$$
and noting that 
$\E\left(\Tilde{\varepsilon}_iZ_{jk} \mid X_{i}, X_{j}\right) =0 ~\text{if}~ k \not= i$, we write this (by use of iterated expectation) as
\begin{align*}
    \E\left [E_{n1}^{(1)}\right ] &= \frac{1}{n(n-1)}\sum_{i\not=j} \E\left[\left.\frac{K_{ij}}{h^d}\E\left(\Tilde{\varepsilon}_i c_X^*(U_i)n^{-1}\sum_{k=1}^n Z_{jk}\,\right| X_{i}, X_{j}\right)\right] \\
    &= \frac{1}{n^2(n-1)} \sum_{i\not=j} \E\left(\frac{K_{ij}}{h^d}\Tilde{\varepsilon}_i c_X^*(U_i)Z_{ji}\right). 
\end{align*}
From the substitution argument (see \eqref{subst}) and the regularity conditions \ref{item3} and \ref{item4}, we again obtain 
\[
    \E(h^{-d}K_{ij}c_X^*(U_i)Z_{ji}) = O(1)\,,
\]
giving $\E\left(E_{n1}^{(1)}\right) = O(1/n)$.
For the variance, we accordingly have 
\begin{align*} 
    &\E\left(\left(E_{n1}^{(1)}\right)^2\right) \\
    =~& \frac{1}{n^2(n-1)^2}\sum_{i\not=j}\sum_{i'\not=j'} \E\biggl[\frac{K_{ij}}{h^d}\frac{K_{i'j'}}{h^d}\Tilde{\varepsilon}_i \Tilde{\varepsilon}_{i'}c_X^*(U_i)c_X^*(U_{i'})  \int yc(F_0(y), \F(X_{j}), \vartheta^*)\ddiff(y) \\
    &\hspace{4cm} \left.\cdot \int yc(F_0(y), \F(X_{j'}), \vartheta^*)\ddiff(y)\right] \\
    =~& O\left(\frac{1}{n^6h^{2d}}\right)\sum_{i\not=j}\sum_{i'\not=j'}\sum_{k,k'} \E\Bigl[K_{ij}K_{i'j'} \E\Bigl(\Tilde{\varepsilon}_i\Tilde{\varepsilon}_{i'}c_X^*(U_i)c_X^*(U_{i'})Z_{jk}Z_{j'k'} \mid X_{i}, X_{i'}, X_{j}, X_{j'}\Bigr)\Bigr].
\end{align*}
The conditional mean $\E\left(\Tilde{\varepsilon}_i\Tilde{\varepsilon}_{i'}c_X^*(U_i)c_X^*(U_{i'})Z_{jk}Z_{j'k'} \mid X_{i}, X_{i'}, X_{j}, X_{j'}\right)$ vanishes if $i \not= i'$ and if either $i \notin \{k, k'\}$ or $i' \notin \{k, k'\}$. In other words, we only consider the case $i=i'$ as well as the cases $i=k, i'=k'$ and $i=k'$, $i'=k$. In conclusion,
\begin{align*}
    &\E\left(\left(E_{n1}^{(1)}\right)^2\right) \\
    =~& O\left(\frac{1}{n^6h^{2d}}\right)\sum_{i\not= j,j'} \E\left[K_{ij}K_{ij'} \E\left(\left.\Tilde{\varepsilon}_i^2 c_X^*(U_i)^2\sum_{k,k'} Z_{jk}Z_{j'k'}\,\right| X_{i}, X_{j}, X_{j'}\right)\right] \\
    +~& O\left(\frac{1}{n^6h^{2d}}\right)\sum_{k\not=k'}\sum_{j\not=k, j'\not=k'} \E\Bigl[K_{kj}K_{kj'}\Tilde{\varepsilon}_k\Tilde{\varepsilon}_{k'}c_X^*(U_k)c_X^*(U_{k'})(Z_{jk}Z_{j'k'} + Z_{j'k}Z_{jk'})\Bigr].
\end{align*}
The second term is $O(n^{-2})$ as it only involves four summation indices, and 
\begin{align*}
\E\Bigl(h^{-2d}K_{kj}K_{kj'}\Tilde{\varepsilon}_k\Tilde{\varepsilon}_{k'}c_X^*(U_k)c_X^*(U_{k'})Z_{jk}Z_{j'k'}\Bigr) = O(1)\,,
\end{align*}
which holds due to the regularity conditions \ref{item3} and \ref{item4}, as well as the previous substitution argument. We rewrite the first term as 
\begin{align*}
    & O\left(\frac{1}{n^4h^{2d}}\right)\sum_{i\not= j,j'} \E\left[K_{ij}K_{ij'} \E\left(\left.\Tilde{\varepsilon}_i^2 c_X^*(U_i)^2\left(\frac{1}{n}\sum_{k=1}^n Z_{jk} \right)\left(\frac{1}{n}\sum_{k'=1}^n Z_{j'k'} \right)\,\right| X_{i}, X_{j}, X_{j'} \right)\right] \\
    \leq~& O\left(\frac{1}{n^4h^{2d}}\right)\sum_{i\not= j,j'} \E\left[K_{ij}K_{ij'}\E\left(\Tilde{\varepsilon}_i^2 \left(\frac{1}{n}\sum_{k \not= j} Z_{jk} + \frac{1}{n} Z_{jj}\right)\right.\right. \\
    & \hspace{5.32cm} \cdot \negthinspace \left.\left.\left.\left(\frac{1}{n}\sum_{k' \not= j'} Z_{j'k'} + \frac{1}{n}Z_{j'j'}\right)\,\right| X_{i}, X_{j}, X_{j'}\right)\right] \\
    =~& O\left(\frac{1}{n^4h^{2d}}\right) \sum_{i\not= j,j'} \E\biggl[K_{ij}K_{ij'}\E\biggl(\Tilde{\varepsilon}_i^2\biggl(O_{\PP}(n^{-1}) + O_{\PP}(n^{-3/2}) (Z_{jj} + Z_{j'j'}) \\
    & \hspace{7.08cm} + \negthickspace \left.\left.\left.\left.\frac{1} {n^2}Z_{jj}Z_{j'j'}\right)\,\right| X_{i}, X_{j}, X_{j'} \right)\right] \\
    =~& O\left(\frac{1}{n^5h^{2d}}\right)\sum_{i\not= j,j'} \E(K_{ij}K_{ij'} \Tilde{\varepsilon}_i^2) = O(n^{-2})\,,
\end{align*}
where we used that $n^{-1}\sum_{k \not= j} Z_{jk} = O_{\PP}(n^{-1/2})$ as the $Z_{jk}$ are i.i.d. with $\E(Z_{jk}) = 0$. In conclusion, $\Var(E_{n1}^{(1)}) = O(n^{-2}).$ By Chebyshev's inequality, we have $E_{n1}^{(1)} = O_{\PP}(1/n)$. The assertion now follows from \eqref{en12} and the decomposition \eqref{en1+en2}. \hfill \qed

\subsection*{Proof of Lemma~\ref{lemma4.2}}
Note that $\M_k(X_j)$ depends not only on $X_j$ but also on $X_k$. By taking conditional means, we have 
\[
    \E\left(K_{ij}\Delta^*(X_i)\left(\M_k(X_j) - \varepsilon_j\right)\right) = \E\left(K_{ij}\Delta^*(X_i)\E\left(\M_k(X_j) \mid X_i, X_j\right)\right),
\]    
since we already see that $\E(-\Delta^*(X_i)\varepsilon_j \mid X_i, X_j) = 0$ for independence reasons. By the definition of $\M_k(X_j)$ as in \eqref{mkx} (plugging $x=X_j$), we first compute 
\begin{align*}
    &\E\left(\left.\sum_{l=1}^d \Bigl[\textbf{1}\{X_{k}^{(l)} \leq X_{j}^{(l)}\} - F_l(X_{j}^{(l)})\Bigr]\partial_{x_l} m^*(X_j)\partial_{u_l} F_l^{-1}(U_j^{(l)}) ~\right|\, X_i,X_j\right) \\
    =~& \sum_{l=1}^d \partial_{x_l} m^*(X_j)\partial_{u_l} F_l^{-1}(U_j^{(l)})\,\E\left(\textbf{1}\{X_{k}^{(l)} \leq X_{j}^{(l)}\} - F_l(X_{j}^{(l)}) \mid X_i, X_j\right) = 0\,,
\end{align*}
where we used the fact that the indices $j\neq k$ are distinct. Likewise, we have 
\begin{align*}
    &\E\left(\left. \int \Bigl[\textbf{1}\{Y_k \leq y\} - F_0(y)\Bigr]c(F_0(y), U_j, \vartheta^*)\dy~\right|\,X_i, X_j\right) \\
    =~& \int c(F_0(y), U_j, \vartheta^*) \E\left(\textbf{1}\{Y_k \leq y\} - F_0(y) \mid X_i, X_j\right)\dy = 0\,,
\end{align*}
as we may interchange integration and expectation due to the regularity conditions. Also, 
\[
\E\left(\eta_k^\top \partial_\vartheta m(X_j, \vartheta^*) \mid X_i, X_j\right) = 0
\]
since $\eta_k$ is independent of $X_i$ and $X_j$. It follows that $\E\left(K_{ij}\Delta^*(X_i)\left( \M_k(X_j) - \varepsilon_j\right)\right) = 0$ whenever $i,j,k$ are pairwise distinct, hence, $\sqrt{n}\,\E(W_{n4}) = 0$. \hfill \qed 

\subsection*{Proof of Lemma~\ref{lemma4.3}}
We recall the notations \eqref{det100} and \eqref{wn4quer} and rewrite $\overline{W}_{n4}$ as
\[
    \overline{W}_{n4} = \frac{1}{n(n-1)}\sum_{i\not=j} \Bigl(\M_i(X_j) - \varepsilon_j\Bigr) \frac{1}{n-1} \sum_{\substack{k=1\\k\not=j}}^n \Delta^*(X_j)p(X_j)\,,
\]
which gives 
\begin{align*}
    &\E\biggl[\Bigl(W_{n4} - \overline{W}_{n4}\Bigr)^2\biggr] \\
    =~& \frac{1}{n^3(n - 1)^2(n-2)}\,\E\left[\Biggl(\sum_{i\not=j} \Bigl(\M_i(X_j) - \varepsilon_j\Bigr)\sum_{\substack{k=1\\k\not=j}}^n \left[\frac{K_{kj}}{h^d}\Delta^*(X_k) - \Delta^*(X_j)p(X_j)\right]\Biggr)^2\right] \\
    =~& \frac{1}{n^3(n-1)^2(n-2)} \sum_{\substack{i_1,k_1\not=j_1 \\ i_2,k_2\not=j_2}} \E\Bigl(\M_{i_1j_1}\M_{i_2j_2}\K_{k_1j_1}\K_{k_2j_2}\Bigr)\,,
\end{align*}
where for any $i, j, k \in \{1,..,n\}$ with $i \not= j$ and $k \not= j$:
\begin{align*}
    \M_{ij} &:= \M_{i}(X_{j}) - \varepsilon_{j}\,, \\
    \K_{kj} &:= h^{-d} K_{kj}\Delta^*(X_k) - \Delta^*(X_j)p(X_j)\,.
\end{align*}
In Lemma~\ref{lemma4.2}, we have proven that $\E(\M_{ij} \mid X_j) = 0$ and $\E\bigl(h^{-d}K_{kj}\Delta^*(X_k)\M_{ij}\bigr) = 0$, from which we conclude
\begin{align*}
    \E\Bigl(\M_{ij}\K_{kj}\Bigr) &= \E\Bigl(\M_{ij}\bigl(h^{-d}K_{kj}\Delta^*(X_{k}) - \Delta^*(X_{j})p(X_{j})\bigr)\Bigr) \\
    &= -\E\Bigl(\M_{ij}\Delta^*(X_{j})p(X_{j})\Bigr) = -\E\Bigl(\E\bigl(\M_{ij} \mid X_{j}\bigr) \Delta^*(X_{j})p(X_{j})\Bigr) = 0\,.
\end{align*}
Therefore, if $i_1, j_1, k_1, i_2, j_2, k_2$ are all distinct, then
\[
    \E\Bigl(\M_{i_1j_1}\M_{i_2j_2}\K_{k_1j_1}\K_{k_2j_2}\Bigr) = \E\Bigl(\M_{i_1j_1}\K_{k_1j_1}\Bigr)\E\Bigl(\M_{i_2j_2}\K_{k_2j_2}\Bigr) = 0\,.
\]
Moreover, the tuples $(i_1, j_1, k_1, i_2, j_2, k_2)$ that consist of at most four distinct indices can be ignored, since their expectations are all finite. For the quintuples $(i_1, j_1, k_1, i_2, j_2, k_2)$ with exactly one pair of colliding indices, we have to show that the associated expectations are all $o(1)$. In light of the preceding arguments, we can ignore the cases $i_1=k_1$ and $i_2=k_2$. 

If $k_1 = k_2,$ we simply have
\begin{align*}
    \E\Bigl(\M_{i_1j_1}\M_{i_2j_2}\K_{k_1j_1}\K_{k_1j_2}\Bigr) &= \E\Bigl(\M_{i_2j_2}\K_{k_1j_1}\K_{k_1j_2}\E(\M_{i_1j_1} \mid X_{j_1}, X_{k_1}, X_{i_2}, X_{j_2})\Bigr) \\
    &= \E\Bigl(\M_{i_2j_2}\K_{k_1j_1}\K_{k_1j_2}\E(\M_{i_1j_1} \mid X_{j_1})\Bigr) = 0\,,
\end{align*}
as the remaining indices $i_1, i_2, j_1, j_2, k_1$ are all assumed to be distinct. In the same way, we can handle all other cases except for the case of $i_1=i_2$. For this particular case, we have
\begin{align*}
    \E\Bigl(\M_{i_1j_1}\K_{k_1j_1}\M_{i_1j_2}\K_{k_1j_2}\Bigr) &= \E\Bigl(\M_{i_1j_1}\M_{i_1j_2}\E(\K_{k_1j_2}\K_{k_1j_1} \mid X_{i_1}, X_{j_1}, X_{j_2})\Bigr) \\
    &= \E\Bigl(\M_{i_1j_1}\M_{i_1j_2}\E(\K_{k_1j_1} \mid X_{j_1})^2\Bigr)\,.
\end{align*}
By the previously used substitution argument, we have that 
\[
    \E(\K_{k_1j_1} \mid X_{j_1}) = \int K(u)\Delta^*(X_{j_1} + hu)p(X_{j_1}+hu)\du - \Delta^*(X_{j_1})p(X_{j_1}) = O(h^2)\,,
\]
giving $\E\Bigl(\M_{i_1j_1}\M_{i_1j_2}\E(\K_{k_1j_1} \mid X_{j_1})^2\Bigr) = O(h^4) = o(1)$\,. In conclusion,
\[
    \E\Bigl(W_{n4} - \overline{W}_{n4}\Bigr) = O(n^{-1}h^4) + O(n^{-2}h^{-d}) = o(n^{-1})\,. 
\]

\subsection*{Proof of Lemma~\ref{lemma4.4}}
Since $\E(\overline{W}_{n4}) = 0$ for the same reasons as in the proof of Lemma~\ref{lemma4.2}, we can write
\begin{align*}
    &\Var(\sqrt{n}\,\overline{W}_{n4}) = \E(n\overline{W}_{n4}^2) \\
    =~& \frac{1}{n(n-1)^2}\sum_{i\neq j} \sum_{r \neq s} \E\Bigl(\bigl(\M_i(X_j) - \varepsilon_j\bigr)\Delta^*(X_j)p(X_j)\bigl(\M_r(X_s) - \varepsilon_s\bigr)\Delta^*(X_s)p(X_s)\Bigr) \\
    =~& \frac{1}{n(n-1)^2}\sum_{(i,j,r,s) \in \I}  \E\Bigl(\bigl(\M_i(X_j) - \varepsilon_j\bigr)\Delta^*(X_j)p(X_j)\bigl(\M_r(X_s) - \varepsilon_s\bigr)\Delta^*(X_s)p(X_s)\Bigr) + o(1)\,,
\end{align*}
where $\I$ encompasses all quadruplets $(i,j,r,s)$ with exactly two indices matching, but $i\not=j$ and $r\not=s$ (since again, all distinct quadruplets are omitted due to independence). In the case of $j \neq s$, all summands involving $\varepsilon_j$ or $\varepsilon_s$ are likewise zero. Putting $\I' := \{(i,j,r,s) \in \I \negmedspace: j \neq s\}$, we can write 
\begin{subequations}
\begin{align}
    \sum_{(i,j,r,s) \in \I'}  &\,\E\Bigl(\bigl(\M_i(X_j) - \varepsilon_j\bigr)\Delta^*(X_j)p(X_j)\bigl(\M_r(X_s) - \varepsilon_s\bigr)\Delta^*(X_s)p(X_s)\Bigr) \nonumber \\
    = \sum_{i\not=j, i\not=s, j\not=s} &\,\E\Bigl(\M_i(X_j)\M_i(X_s)\Delta^*(X_j)\Delta^*(X_s)p(X_j)p(X_s)\Bigr) \label{18a} \\
    + \sum_{i\not=j, i\not=r, j\not=r} &\,\E\Bigl(\M_i(X_j)\M_r(X_i)\Delta^*(X_i)\Delta^*(X_j)p(X_i)p(X_j)\Bigr) \label{18b} \\
    + \sum_{i\not=j, j\not=s, i\not=s} &\,\E\Bigl(\M_i(X_j)\M_j(X_s)\Delta^*(X_j)\Delta^*(X_s)p(X_j)p(X_s)\Bigr)\,. \label{18c}
\end{align}
\end{subequations}
where \eqref{18a}, \eqref{18b} and \eqref{18c} represent the cases of $i=r,$ $i=s$ and $j=r,$ respectively. In analogy with an argument already used in Lemma~\ref{lemma4.3}, we can find that for distinct $i,j,r,$
\begin{align*}
    &\E\Bigl(\M_i(X_j)\M_r(X_i)\Delta^*(X_i)\Delta^*(X_j)p(X_i)p(X_j)\Bigr) \\
    =~& \E\Bigl(\M_i(X_j)\Delta^*(X_i)\Delta^*(X_j)p(X_i)p(X_j)\E\bigl(\M_r(X_i) \mid X_i, X_j\bigr)\Bigr) = 0\,,
\end{align*}
i.e., $\eqref{18b} = 0$, and likewise, $\eqref{18c} = 0$. Only \eqref{18a} cannot be treated this way, which means we overall have
\begin{align*}
    \E(n\overline{W}_{n4}^2) &= \frac{n-2}{n-1}\,\E\bigl(\M_1(X_2)\M_1(X_3)\Delta^*(X_2)\Delta^*(X_3)p(X_2)p(X_3)\bigr) \\
    &+ \hspace{0.5mm} \frac{n-2}{n-1}\,\E\left((\M_1(X_2) - \varepsilon_2)(\M_3(X_2) - \varepsilon_2)\Delta^*(X_2)^2p(X_2)^2\right).
\end{align*}
Expanding the second summand, we have
\begin{align*}
    &\E\bigl((\M_1(X_2) - \varepsilon_2)(\M_3(X_2) - \varepsilon_2)\Delta^*(X_2)^2p(X_2)^2\bigr) \\
    =~& \E\bigl(\M_1(X_2)\M_3(X_2)\Delta^*(X_2)^2p(X_2)^2 + \varepsilon_2^2\Delta^*(X_2)p(X_2)^2\bigr) \\
    &- \E\bigl(\varepsilon_2 (\M_1(X_2) + \M_3(X_2))\Delta^*(X_2)^2p(X_2)^2\bigr)\,.
\end{align*}
Obviously, the second expectation is zero, and we also have 
\[
    \E\bigl(\M_1(X_2)\M_3(X_2)\Delta^*(X_2)^2p(X_2)^2\bigr) = \E\bigl(\Delta^*(X_2)^2p(X_2)^2\M_1(X_2)\E(\M_3(X_2) \mid X_1, X_2)\bigr) = 0\,.
\]
The claim follows. \hfill \qed

\subsection*{Proof of Lemma~\ref{lemma4.5}}
We first write
\begin{align*}
    &\Cov\left(\sqrt{n}\left(\widetilde{W}_{n3}^{(2)} - \widetilde{W}_{n2}^{(2)}\right), \sqrt{n}\widetilde{W}_{n3}^{(3)}\right) \\
    =~& \frac{1}{n(n-1)^2} \Cov\left(\sum_{i\not=j} \frac{K_{ij}}{h^d}\Delta^*(X_i)\bigl[(\hat{m} - m^*)(X_j) - \varepsilon_j\bigr],~\sum_{k \not= l} \frac{K_{kl}}{h^d}\Delta^*(X_k)\Delta^*(X_l)\right).
\end{align*}
For independence reasons, we can ignore the tuples of distinct $i,j,k,l$. Letting $\mathcal{I} := \{(i,j,k,l) \in \{1,\ldots,n\}^4: |\{i,j,k,l\}| \leq 3\},$ we have
\begin{align*}
    &\Cov\left(\sqrt{n}\left(\widetilde{W}_{n3}^{(2)} - \widetilde{W}_{n2}^{(2)}\right), \sqrt{n}\widetilde{W}_{n3}^{(3)}\right) \\
    =~& \frac{1}{n(n-1)^2h^{2d}} \sum_{(i,j,k,l) \in \mathcal{I}} \Cov\Bigl(K_{ij}\Delta^*(X_i)[(\hat{m} - m^*)(X_j) - \varepsilon_j],  K_{kl}\Delta^*(X_k)\Delta^*(X_l)\Bigr).
\end{align*}
By use of Taylor expansion,
\begin{align*}
    (\hat{m} - m^*)(X_j) &= \partial_\vartheta m^*(X_j)^\top(\hat{\vartheta} - \vartheta^*) + o_{\PP}(n^{-1/2}) \\
    &= \frac{1}{n}\sum_{i=1}^n \partial_\vartheta m^*(X_j)^\top\eta_i + o_{\PP}(n^{-1/2})\,.
\end{align*}
Note that the partial derivative exists by the regularity assumptions \ref{item3} and \ref{item4}. Hence,
\begin{align*}
    &\Cov\left(\sqrt{n}\left(\widetilde{W}_{n3}^{(2)} - \widetilde{W}_{n2}^{(2)}\right), \sqrt{n}\widetilde{W}_{n3}^{(3)}\right) \\ 
    =~& \frac{1}{n(n-1)^2h^{2d}} \sum_{(i,j,k,l) \in \mathcal{I}} \Cov\left(K_{ij}\Delta^*(X_i) \left(\frac{1}{n}\partial_\vartheta m^*(X_j)^\top\sum_{r=1}^n \eta_r - \varepsilon_j + o_{\PP}(1)\right),\right. \\
    & \hspace{4.7cm}  K_{kl}\Delta^*(X_k)\Delta^*(X_l)\Biggr).
\end{align*}
The terms involving factors of $\varepsilon_j$ all give zero covariance as the $\varepsilon_j$ are independent from all $X_1, \ldots, X_n$. Likewise, for any $r \notin \{k,l\}$, we have for independence reasons:
\[
    \Cov\left(K_{ij}\Delta(X_i)c_X^*(U_i)\partial_\vartheta m^*(X_j)^\top \eta_r, K_{kl}\Delta(X_k)\Delta(X_l)c_X^*(U_k)c_X^*(U_l)\right) = 0\,.
\] 
Therefore, 
\begin{align*}
    &\Cov\left(\sqrt{n}\left(\widetilde{W}_{n3}^{(2)} - \widetilde{W}_{n2}^{(2)}\right), \sqrt{n}\widetilde{W}_{n3}^{(3)}\right) \\
    =~& \frac{1}{n^2(n-1)^2h^{2d}} \sum_{(i,j,k,l) \in \mathcal{I}} \Cov\left(K_{ij}\Delta(X_i)c_X^*(U_i)\partial_\vartheta m^*(X_j)^\top(\eta_k + \eta_l), \right.\\
    &\hspace{4.75cm}K_{kl}\Delta(X_k)\Delta(X_l)c_X^*(U_k)c_X^*(U_l)\Bigl) \\
    +~& \frac{1}{n(n-1)^2h^{2d}} \sum_{(i,j,k,l) \in \mathcal{I}} \Cov\Bigl(K_{ij}\Delta(X_i)c_X^*(U_i)o_{\PP}(1), K_{kl}\Delta(X_k)\Delta(X_l)c_X^*(U_k)c_X^*(U_l)\Bigr)\,.
\end{align*}
By the standard substitution argument (see \eqref{subst}) and regularity assumptions, the first term is $O(1/n)$ and the second term is $o(1),$ giving the claim. \hfill \qed


\subsection*{Proof of Lemma~\ref{lemma5.2}} Part a) is a special case of \cite[Theorem 1]{miller1972weak}, but we give a separate proof that prepares for b). Employ the sequential H\'{a}jek projection 
\[
    \hat{U}_{\nt} := \frac{2t}{n} \sum_{i=1}^{\nt} [g(X_i) - r] + t^2r\,,
\]
with $g(X_i) := \E(h(X_i, X' \mid X_i)$ for an independent copy $X'$. Then, for $k := \nt \in \{1, \ldots, n\},$ the difference $U_{\nt} - \hat{U}_{\nt} = U_k - \hat{U}_k$ can be rewritten as
\begin{align*}
    U_{k} - \hat{U}_{k} &= \frac{1}{n(n-1)}\sum_{i\neq j}^k [h(X_i,X_j) - g(X_i) - g(X_j) + r] + O_{\PP}(1/n) \\
    &= 
    \frac{2}{n(n-1)} \sum_{i=1}^k S_i + O_{\PP}(1/n)\,,
\end{align*} 
with 
$$
S_i := \sum_{j=1}^{i-1} \,[h(X_i,X_j) - g(X_i) - g(X_j) + r].
$$
Observing that
\begin{align*}
    \E(S_i \mid X_1, \ldots, X_{i-1}) &= \sum_{j=1}^{i-1} \E(h(X_i,X_j) - g(X_i) - g(X_j) + r \mid X_1, \ldots, X_{i-1}) \\
    &= \sum_{j=1}^{i-1} \underbrace{\E(h(X_i,X_j) \mid X_1, \ldots, X_{i-1})}_{=\,g(X_j)} - \sum_{j=1}^{i-1} \underbrace{\E(g(X_i) \mid X_1, \ldots, X_{i-1})}_{=\,\E(g(X_i)) \,=\, r} \\
    &\qquad - \sum_{j=1}^{i-1} \underbrace{\E(g(X_j) \mid X_1, \ldots, X_{i-1})}_{=\,g(X_j)}\,+~(i-1)r = 0\,.
\end{align*}
It follows that  $(S_i)_{i=1,\ldots,n}$ is a martingale difference sequence with respect to the filtration $(\mathcal{F}_i)_{i=1,\ldots,n}$, where
$\mathcal{F}_i$ denotes the 
$\sigma$-field generated  by   $(X_1, \ldots, X_i)$.
Therefore, $\sqrt{n}(U_{k} - \hat{U}_{k})_{k=1,\ldots,n}$ forms a martingale and by Doob's inequality,
\[
    \max_{k=1,\ldots,n} \E(n(U_{k} - \hat{U}_{k})^2) \leq 4\E(n(U_{n} - \hat{U}_{n})^2) = o_{\PP}(1)\,.
\]
In conclusion, $\sqrt{n}(U_{\nt} - \hat{U}_{\nt}) = o_{\PP}(1)$ holds uniformly in $t$. Since each $\hat{U}_{\nt}$ is a partial sum of i.i.d.\ samples of the same random variable, it follows by Donsker's theorem that 
\[
    \{\sqrt{n}(\hat{U}_{\nt} - t^2r)\}_{t \in [0,1]} = \left\{2t\,\frac{1}{\sqrt{n}} \sum_{i=1}^{\nt} [g(X_i) - r]\right\}_{t \in [0,1]} \overset{\D}{\longrightarrow} \{\sigma t B(t)\}_{t \in [0,1]}.
\]
\par \medskip
b): Let $g_n(X_i) := \E(h(X_i, X') \mid X_i)$ and let $\hat{U}_n = 2/n \sum_{i=1}^n [g_n(X_i) - r_n] + r_n$ be the H\'{a}jek projection of $U_n$. By the assumption $\E(h_n(X_1, X_2)^2) = o(n),$ \cite[Theorem 3.1]{powell1989semiparametric} guarantees that $\sqrt{n}(U_n - \hat{U}_n) = o_{\PP}(1)$. By the martingale difference argument above, this also holds for $\sqrt{n}(U_{\nt} - \hat{U}_{\nt})$ uniformly in $t$, where $\hat{U}_{\nt} = 2t/n \sum_{i=1}^{\nt} [g_n(X_i) - r_n] + t^2r_n$. We only need to clarify the limit of $\{\sqrt{n}\,\hat{U}_{\nt}\}_{t \in [0,1]}$. 

For each $n \in \N$ and $k \in \{1, \ldots, n\},$ we put $\xi_{nk} := 2tn^{-1/2}(g_n(X_k) - r_n)$, so that
\[
    \sqrt{n}\Bigl(\hat{U}_{\nt} - t^2r_n\Bigr) = \sum_{k=1}^{\nt} \xi_{nk}\,.
\]
Then, the sequences $(\xi_{n1}, \ldots, \xi_{nn})$ form martingale differences as well, and we have for $\sigma_{nk}^2 := \Var(\xi_{nk})$:
\[
    \sigma_n^2 := \sum_{k=1}^{\nt} \sigma_{nk}^2 = \frac{4\nt}{n}\Var(g_n(X_1)) \longrightarrow t\sigma^2\,.
\]
Then, this triangular array satisfies the Lyapunov condition (and hence, the Lindeberg condition): For each $t \in [\delta, 1]$,
\begin{align*}
    \frac{1}{\sigma_n^4}\sum_{j=1}^{\nt} \E\left(|\xi_{nj} - \E(\xi_{nj})|^4\right) &= \frac{t^4}{(4t\sigma_{n1}^2)^2}\sum_{i=1}^n \E\left(\left(\frac{2}{\sqrt{n}}(g_n(X_k) - r_n\right)^4\right) \\
    &= \frac{t^2}{n^2(\sigma_{n1}^2)^2}  \sum_{j=1}^{\nt} \E((g_n(X_j) - r_n)^4) = o(1)\,,
\end{align*}
which is due to the convergence of $\sigma_{n1}^2$ and the uniform boundedness of fourth central moments. Then, all conditions of \cite[Theorem 18.2]{billingsley1999convergence} are satisfied, and the claim follows. \hfill \qed 

\subsection*{Proof of Lemma~\ref{lemma4.7}}
We employ the decomposition
\[
    W_{\nt} = W_{\nt1} - 2\left(W_{\nt2}^{(1)} - W_{\nt2}^{(2)}\right) + \left(W_{\nt3}^{(1)} - 2W_{\nt3}^{(2)} + W_{\nt3}^{(3)}\right)
\]
with $W_{\nt1}, \ldots, W_{\nt3}^{(3)}$ being the sequential counterparts of $W_{n1}, \ldots, W_{n3}^{(3)}$ as introduced in the proof of Theorem~\ref{thm4.1}, that is, double-sums range only over $1 \leq i \neq j \leq \nt$ and estimated quantities only refer to the subsample $(X_1, Y_1), \ldots, (X_{\nt}, Y_{\nt})$. \par \bigskip \noindent 
\textbf{Step 1:} We first show that Lemma~\ref{lemmauniform} holds uniformly in $t$, i.e., 
\begin{align*}
   \sup_{t \in [\delta, 1]} &\|\boldsymbol{e}(x) - \hat{\boldsymbol{e}}_{\nt}(x)\|_\infty = O_{\PP}(n^{-1/2})\,, \\
   \sup_{t \in [\delta, 1]} & \|c_X(\F(x), \vartheta^*) - c_X(\hat{\F}_{\nt}(x), \hat{\vartheta}_n)\|_\infty = O_{\PP}(n^{-1/2})\,.
\end{align*}
In analogy to the proof of Lemma~\ref{lemmauniform}, we put
\[
    \overline{\boldsymbol{e}}_{\nt}(x) := \frac{1}{\nt} \sum_{i=1}^{\nt} Y_i c(F_0(Y_i), \F(x), \vartheta^*)\,.
\]
Adapting the notation of the proof of Lemma~\ref{lemmauniform}, we can write $\overline{\boldsymbol{e}}_{\nt}(x) = \nt^{-1} \sum_{k=1}^{\nt} f_\textbf{u}(Y_k),$ $\boldsymbol{e}(x) = \E(f_\textbf{u}(Y))$. Then, we have $\sqrt{n}(\overline{\boldsymbol{e}}_{\nt}(x) - \boldsymbol{e}(x)) = n/\nt \alpha_n(f,t) \leq \delta^{-1}\alpha_n(f_\textbf{u},t),$ with
\[
    \alpha_n(f_\textbf{u},t) := \frac{1}{\sqrt{n}} \sum_{i=1}^{\nt} (f_\textbf{u}(Y_k) - \E(f_\textbf{u}(Y)))\,.
\]
Then, $\{\alpha_n(f_\textbf{u}, t)\}_{\textbf{u} \in [0,1]^d, t\in [\delta,1]}$ converges to a Gaussian process in $\mathcal{L}^\infty(\mathfrak{F} \times [\delta,1]),$ with $\mathfrak{F} = \{f_\textbf{u} \mid \textbf{u} \in [0,1]^d\}$. The remaining arguments in the proof of Lemma~\ref{lemmauniform} regarding $\|\overline{\boldsymbol{e}} - \hat{\boldsymbol{e}}\|_\infty$ and $\|c_X - \hat{c}_X\|_\infty$ are only based on Taylor expansions, which work uniformly in $t$. \par 
\bigskip 
\textbf{Step 2:} We replace the product $\hat{c}_X(\hat{U}_{i\nt})\hat{c}_X(\hat{U}_{j\nt})$ with $c_X^*(U_i)c_X^*(U_j)$ in the terms $W_{\nt1},  W_{\nt2}^{(2)}$ and $ W_{\nt3}^{(3)}$, i.e., we demonstrate that 
\begin{equation}
    W_{\nt1} - \widetilde{W}_{\nt1} := W_{\nt1} - \frac{1}{\nt(n-1)}\sum_{1 \leq i\neq j \leq \nt} \frac{K_{ij}}{h^d}\varepsilon_{i}\varepsilon_{j}c_X^*(U_i)c_X^*(U_j) = o_{\PP}(n^{-1/2}) \label{step1}
\end{equation}
uniformly in $t \in [\delta,1]$. Recalling the Taylor expansion employed in the proof of Lemma~\ref{lemmareplace1a}, we again take $g_X(U_i, U_j, \vartheta) = c_X(u_i, \vartheta)c_X(u_j, \vartheta)$ and adapt the notation of \eqref{taylor}. From previously used arguments, we have 
\[
    O_{\PP}\left(\frac{1}{n}\right)\frac{1}{\nt(n-1)}\sum_{1\leq i\neq j\leq \nt} \frac{K_{ij}}{h^d} e_{i}e_{j} \mathrm{H}g_X(U_i, U_j, \vartheta^*) = O_{\PP}\left(\frac{\nt}{n(n-1)}\right) = o_{\PP}\left(\frac{1}{\sqrt{n}}\right)\,,
\]
which holds uniformly in $t \in [\delta,1]$. The first-order terms 
\[
    O_{\PP}\left(\frac{1}{\sqrt{\nt}}\right)\frac{1}{\nt(n-1)}\sum_{1\leq i\neq j\leq \nt} \frac{K_{ij}}{h^d} \varepsilon_{i}\varepsilon_{j}(\partial_{u_i} + \partial_{u_j} + \partial_{\vartheta} )g_X(U_i, U_j, \vartheta^*)
\]
can be represented as $U$-statistics, for which we can apply Lemma~\ref{lemma5.2}b), which means that these $U$-statistics have a sequential limiting object at the $\sqrt{n}$-scale. In conclusion, we can successfully replace $W_{\nt1}$ with $\widetilde{W}_{\nt1}$. The same arguments apply for $W_{\nt2}^{(2)}$ and $W_{\nt3}^{(3)}$.
\par 
\bigskip
\textbf{Step 3:} Now, we can figure that the parts $\widetilde{W}_{\nt1}, W_{\nt2}^{(1)}, W_{\nt3}^{(1)}$ are still negligible at the $\sqrt{n}$-scale. Regarding $\widetilde{W}_{\nt1}$, the kernel $H_n(Z_i, Z_j) = h^{-d}K_{ij}\varepsilon_i\varepsilon_j$ still forms a degenerate $U$-statistic, and regarding $W_{\nt2}^{(1)}$, all arguments in the proof of Lemma~\ref{lemmaA1} can be stated uniformly in $t \in [\delta, 1]$ with the aid of Step 1 and Step 2. The negligibility of $W_{\nt3}^{(1)}$ follows directly from Step 1. By analogy with \eqref{eqn5}, we therefore employ the decomposition \[
    \sqrt{n}W_{\nt} = 2\sqrt{n}\left(\widetilde{W}_{\nt2}^{(2)} - \widetilde{W}_{\nt3}^{(2)}\right) + \sqrt{n}\widetilde{W}_{\nt3}^{(3)} + O_{\PP}\left(\frac{1}{\sqrt{n}}\right).
\]
Furthermore, the uniform expansion 
\[
    \hat{m}_{\nt}(x) - m^*(x) = c_X(\F(x))^{-1}\sum_{k=1}^{\nt} \M_k(x) + o_{\PP}(n^{-1/2})
\]
can be understood by previously used arguments, following the original proof of \cite[Theorem 1]{Noh2013}. For $\widetilde{W}_{\nt2}^{(2)} - \widetilde{W}_{\nt3}^{(2)}$, we thus employ the expansion 
\[
    \sqrt{n}\Bigl(\hat{m}_{\nt}(X_j) - m(X_j)\Bigr) = \frac{1}{\sqrt{\nt}}\frac{1}{c_X^*(U_j)}\sum_{k=1}^{\nt} \M_k(X_j)
\]
and ignore the colliding summands (i.e. $k=i$ or $k=j$) in the representation 
\[
    \sqrt{n}\left(\widetilde{W}_{\nt3}^{(2)} -  \widetilde{W}_{\nt2}^{(2)}\right) = \frac{\sqrt{n}}{\nt(n-1)}\sum_{1\leq i\not=j\leq \nt} \frac{K_{ij}}{h^d}\Delta^*(X_i)\frac{1}{\nt}\sum_{k=1}^{\nt} (\M_k(X_j) - \varepsilon_j) + o_{\PP}(1)\,,
\]
i.e., we consider
\[
    \sqrt{n}\,W_{\nt4} = \frac{\sqrt{n}}{\nt^2(n-1)} \sum_{i\neq j\neq k}^{\nt}  \underbrace{(\M_i(X_j) - \varepsilon_j)}_{=:\, f(X_i,X_j)}\underbrace{\frac{K_{kj}}{h^d} \Delta^*(X_k)}_{=:\,k(X_j,X_k)}\,.
\]
The lemmas~\ref{lemma4.2} and~\ref{lemma4.5} remain valid in the sequential setting as well. \par 
\bigskip
\textbf{Step 4:} We replicate Lemma~\ref{lemma4.3}, i.e., we replace $\sqrt{n}\,W_{\nt4}$ with 
\[\sqrt{n}\,\overline{W}_{\nt4} = \frac{\sqrt{n}}{\nt(n-1)} \sum_{i\neq j}^{\nt} (\M_i(X_j) - \varepsilon_j) \underbrace{\Delta^*(X_j) p(X_j)}_{=:\,\overline{k}(X_j)}\,.\]
The difference $W_{\nt4} - \overline{W}_{\nt4}$ is then written as
\begin{align*}
    W_{\nt4} - \overline{W}_{\nt4} &= \frac{1}{\nt^2(n-1)} \sum_{i\neq j \neq k}^{\nt}  f(X_i,X_j)\left(k(X_j,X_k) - \overline{k}(X_j)\right). 
\end{align*}
Now, we take the following sequence of martingale differences: 
\[
    S_i := \sum_{j\neq k}^{i-1} f(X_i,X_j)\left(k(X_j,X_k) - \overline{k}(X_j)\right),
\]
since
\begin{align*}
    &\E(S_{i} \mid X_1, \ldots, X_{i-1}) \\
    =~& \sum_{j\neq k}^{i-1} \E\left((\M_i(X_j) - \varepsilon_j)\left(\left.\frac{K_{kj}}{h^d}\Delta^*(X_k) - \Delta^*(X_j)p(X_j)\right) \right| X_j,X_k\right) \\
    =~& \sum_{j\neq k}^{i-1} \left(\frac{K_{kj}}{h^d}\Delta^*(X_k) - \Delta^*(X_j)p(X_j)\right)\underbrace{\E(\M_i(X_j) - \varepsilon_j \mid X_j, X_k)}_{=\, 0} = 0\,.
\end{align*}
Therefore,
\[
    \frac{\nt^2}{n^2}\left(W_{\nt4} - \overline{W}_{\nt4}\right) = \frac{1}{n^2(n-1)}\sum_{i=1}^n S_i
\] 
forms a martingale and Doob's maximum inequality gives
\begin{align*}
    \E\left(\sup_{t \in [\delta,1]} n\left(W_{\nt,4} - \overline{W}_{\nt,4}\right)^2\right) &\leq \, \E\left(\max_{r=1,\ldots,n} n\left(W_{r,4} - \overline{W}_{r,4}\right)^2\right) \\
    &\leq \frac{4}{\delta^2}\,\E\left(n(W_{n4} - \overline{W}_{n4})^2\right) = o(1)\,.
\end{align*}

\textbf{Step 5:}  
Overall, we have now demonstrated that 
\[
    \{\sqrt{n}(W_{\nt} - \mu_{\nt})\}_{t \in [\delta,1]} \overset{\D}{\longrightarrow} \left\{\sqrt{n}\left(W_{\nt3}^{(3)} - \mu_{\nt} + \overline{W}_{\nt4}\right)\right\}_{t \in [\delta,1]} + o_{\PP}(1)\,,
\]
where $\mu_{\nt} = \E\left(\widetilde{W}_{\nt3}^{(3)}\right) = t\mu_n + O(n^{-1})$. Applying the rescalings
\begin{align*}
    t\,\widetilde{W}_{\nt3}^{(3)} &= \frac{1}{n(n-1)}\sum_{i\neq j}^{\nt} H_n(Z_i, Z_j)\,, & t\,\overline{W}_{n4} &= \frac{1}{n(n-1)}\sum_{i\neq j}^{\nt} \HH(Z_i, Z_j)\,, \end{align*}
we obtain the sequential approximation 
\[
    \left\{\sqrt{n}\,\overline{W}_{\nt4}\right\}_{t \in [\delta,1]} = \left\{\frac{2}{\sqrt{n}}\sum_{i=1}^{\nt} g(Z_i)\right\}_{t \in [\delta,1]} + o_{\PP}(1)
\]
from Lemma~\ref{lemma5.2}a) and the sequential approximation
\[
    \left\{\sqrt{n}\left(\widetilde{W}_{\nt3}^{(3)} - t\mu_n\right)\right\}_{t \in [\delta,1]} = \left\{\frac{2}{\sqrt{n}}\sum_{i=1}^{\nt} [g_n(X_i) - \mu_n]\right\}_{t \in [\delta,1]} + o_{\PP}(1)
\]
from Lemma~\ref{lemma5.2}b). The claim follows. \hfill \qed 
 
\bigskip

{\bf Acknowledgements} Philip Dörr would like to thank Patrick Bastian, Martin Dunsche, Marius Kroll, and Thomas Lam for helpful discussions. 

This work was supported by TRR 391 \textit{Spatio-temporal Statistics for the Transition of Energy and Transport} (Project number 520388526) funded by the Deutsche Forschungsgemeinschaft (DFG, German Research Foundation). 

Simulations (or parts of them) for this publication were performed on the HPC cluster Elysium of the Ruhr University Bochum, subsidized by the DFG (INST 213/1055-1).
\end{appendices}

\bibliographystyle{apalike}
\bibliography{bibliography.bib}

\end{document}